\documentclass[11pt, reqno]{amsart}
\usepackage[margin=1in]{geometry}
\usepackage{amssymb,amsmath,amsthm,thmtools}
\usepackage{bbm}
\usepackage{enumitem}
\usepackage{tikz}
\usepackage{ytableau}
\usepackage[colorlinks=true,citecolor=blue,linkcolor=blue,backref=page]{hyperref}
\usepackage{cleveref}

\numberwithin{equation}{section}
\newtheorem*{introthm*}{Theorem}
\newtheorem{theorem}{Theorem}[section]
\newtheorem{lemma}[theorem]{Lemma}
\newtheorem{corollary}[theorem]{Corollary}
\theoremstyle{definition}
\newtheorem{definition}[theorem]{Definition}
\newtheorem{def-prop}[theorem]{Definition-Proposition}
\newtheorem{remark}[theorem]{Remark}
\newtheorem{example}[theorem]{Example}
\newtheorem*{acknowledgement}{Acknowledgements}
\newtheorem{notation}[theorem]{Notation}

\DeclareMathOperator{\Ass}{Ass}
\DeclareMathOperator{\Min}{Min}
\DeclareMathOperator{\maxass}{maxAss}
\DeclareMathOperator{\GL}{GL}
\DeclareMathOperator{\Gr}{Gr}

\newcommand{\ZZ}{{\mathbb Z}}
\newcommand{\NN}{{\mathbb N}}
\newcommand{\RR}{{\mathbb R}}
\newcommand{\kk}{{\mathbbm k}}
\def\P{{\mathcal P}}
\def\a{{\bf a}}
\def\b{{\bf b}}
\newcommand{\ba}{\mathbf{a}}
\renewcommand{\l}{\lambda}
\newcommand{\s}{\sigma}

\begin{document}

	\title{Symbolic Powers and Asymptotic Invariants of GL-Invariant Ideals}

	\author{Sankhaneel Bisui}
	\address{Department of Mathematics\\ Indian Institute of Science Education and Research Bhopal \\ Bhopal Bypass Road \\ Bhauri, Bhopal 462066 \\ Madhya Pradesh, India}
	\email{sankhaneel@iiserb.ac.in}

	\author{Alexandra Seceleanu}
	\address{ Department of Mathematics\\ University of Nebraska--Lincoln\\
		203 Avery Hall\\
		Lincoln, NE 68588, USA}
	\email{aseceleanu@unl.edu}

	\keywords{Waldschmidt constant, asymptotic resurgence, symbolic power, Young diagram, GL-invariant ideal.}
\subjclass[2020]{Primary 13A15; Secondary 13A30, 13A50, 14M12, 05E10}

	\begin{abstract}
	This work concerns ideals invariant under the action of the group of linear base changes on a generic matrix; we call these GL-invariant ideals. We determine the ordinary powers, their saturations with respect to determinantal ideals, and the symbolic powers of GL-invariant ideals. We give explicit formulas for asymptotic invariants known as (skew) Waldschmidt constants and asymptotic resurgence, which measure the growth of these families and  compare  the ordinary and symbolic topologies. We also prove that the generalized symbolic Rees algebras associated with these ideals are Noetherian.\end{abstract}

	\maketitle

	\setcounter{tocdepth}{1}
	\tableofcontents

\section{Introduction}\label{s: intro}

Let $\kk$ be a field of characteristic zero, let
$X=(x_{ij})$ be a generic $r\times n$ matrix with $r\leq n$, and set
$R=\kk[X]$.  The group
\[
G=\GL_r(\kk)\times\GL_n(\kk)
\]
acts on $R$ by changes of basis, with
\[
(A,B)\cdot X=A^{-1}XB.
\]
We study the ideals of $R$ that are invariant under this action.

The representation-theoretic structure of $R$ gives a combinatorial
description of its $G$-invariant ideals.  Its irreducible isotypic components
are indexed by Young diagrams $\sigma$ having at most $r$ columns.  We denote
by $I_\sigma$ the ideal generated by the irreducible component indexed by $\sigma$.
De Concini, Eisenbud, and Procesi proved that every nonzero $G$-invariant ideal is a
sum of such isotypic ideals \cite{deconcini1980young}.  Thus, after removing
containment-redundant indices, every nonzero proper $G$-invariant ideal can be written
in the form
\[
I_\Sigma=\sum_{\sigma\in\Sigma}I_\sigma,
\]
where $\Sigma\subset H_r$ is a finite nonempty antichain of Young diagrams.

When $\sigma=(t)$ consists of one row, $I_\sigma$ is the determinantal ideal
$D_t$ generated by the $t\times t$ minors of $X$.  More generally, if
$\sigma=(\sigma_1,\ldots,\sigma_s)$, then
\[
D_\sigma=D_{\sigma_1}\cdots D_{\sigma_s}
\]
is the product of the corresponding determinantal ideals.  De Concini,
Eisenbud, and Procesi determined primary decompositions, integral closures,
and symbolic powers for these products \cite{deconcini1980young}.  Whitehead
subsequently described products of arbitrary isotypic ideals through the
Littlewood--Richardson rule \cite{WhiteheadThesis}.  Nevertheless, explicit
formulas for the symbolic powers of arbitrary $G$-invariant ideals have not
previously been available. Our objective is to fill this gap.

A principal difficulty is that arbitrary isotypic ideals need not possess
bases of standard monomials.  Consequently, the standard monomial methods
that are effective for determinantal ideals and their products do not apply
directly.  Our approach instead combines Whitehead's product formula with
Littlewood--Richardson coefficients, Yamanouchi tableaux, and truncation
operations on Young diagrams.

If $P$ is a prime ideal of a polynomial ring over a perfect field and $m$ is a positive integer, then the contraction $P^mR_P\cap R$ --equivalently, the $P$-primary component in a minimal primary decomposition of $P^m$--is the set of functions vanishing to order at least $m$ at every closed point of the variety defined by $P$, a result of geometric significance due to Zariski and Nagata \cite{Zariski54}; see also the work of Eisenbud and Hochster \cite{EisenbudHochster}. Algebraically, this construction has several generalizations collectively called {\em symbolic power ideals}.
For $1\leq t\leq r$ and $m\geq1$, we define the generalized symbolic power of a $G$-invariant ideal $I$ with respect to $D_t$ by
\[
I^{(m)_t}=I^mR_{D_t}\cap R.
\]
Equivalently, this ideal has the saturation description
\[
I^{(m)_t}=I^m:D_{t-1}^{\infty},
\qquad D_0:=R.
\]
These ideals capture scheme structures defined by functions vanishing to high order on a determinantal variety, while allowing varying embedded components. As $t$ varies, they recover the two customary symbolic-power notions obtained from the associated and minimal primes. The case $t=2$ also includes saturation with respect to the homogeneous maximal ideal.

In this paper we obtain explicit combinatorial formulas for the generalized symbolic powers of $G$-invariant ideals in terms of Young diagrams and Littlewood--Richardson coefficients \Cref{lemma: Symbolic power one diagram invariant}, \Cref{prop: sigma'}. These descriptions underpin all our later results. The truncation formula in  \Cref{prop: sigma'} also leads naturally to the generalized symbolic Rees algebras
 \[ \mathcal R_s^t(I)=\bigoplus_{m\geq0} I^{(m)_t}T^m. \]
 In \Cref{thm: symbolic Rees Sigma} we prove that $\mathcal R_s^t(I)$ is a finitely generated, and hence Noetherian, $R$-algebra for every $G$-invariant ideal $I$.  This result is part of the broader circle of questions surrounding Hilbert's fourteenth problem, which asks when naturally occurring subalgebras of finitely generated algebras are themselves finitely generated.  In our setting, we prove finite generation by representation-theoretic methods: we pass to invariants under a maximal unipotent subgroup, use highest-weight theory  to encode the relevant representations by an affine semigroup, and then apply Grosshans' finite-generation theorem together with Gordan's lemma. This parallels a theorem of Herzog, Hibi, and Trung establishing finite generation of the saturated Rees algebra $\bigoplus_{m\geq0}(J^m:Q^\infty)T^m$ for any pair of monomial ideals $J$ and $Q$ \cite[Theorem~3.2]{herzog2007}. Thus the common polyhedral mechanism is finite generation of affine semigroups, with monomial exponent vectors in their setting replaced here by highest weights of representations. To our knowledge, this representation-theoretic approach has not previously been used to establish finite generation for symbolic Rees algebras of this kind. In particular, specializing our theorem to determinantal ideals recovers the Noetherianity of their symbolic Rees algebras and gives a new proof of that classical result.

Another way to study symbolic powers is by comparing them to the more tractable ordinary powers. This leads to the containment problem of determining the pairs of integers $(a,b)$ such that $I^{(a)}\subseteq I^b$. Swanson proved in \cite{Swanson} that when the ordinary and symbolic topologies are equivalent, the exponent $a$ can be bounded above by a linear function of $b$. Subsequent breakthroughs of Ein--Lazarsfeld--Smith \cite{ELS}, Hochster--Huneke \cite{comparison}, Ma--Schwede \cite{MaSchwede}, and Murayama \cite{Murayama2021UniformBounds} provide a uniform upper bound for the slope of this linear function for all ideals in a regular ring. For specific ideals, however, the containment problem can be refined further, and the linear equivalence of the two topologies can be quantified through asymptotic invariants. This perspective was pioneered by Bocci and Harbourne \cite{BocciHarbournecomparing, BocciHarbourneresurgence} through the Waldschmidt constant and resurgence of an ideal, and extended in \cite{guardo2013asymptotic} via the asymptotic resurgence. We introduce these invariants below.

The {\em Waldschmidt constant} of  a homogeneous ideal $I$ is the asymptotic growth rate of the least degree of a nonzero element in its symbolic powers given by
\[
\widehat{\alpha}(I)=\lim_{m\to\infty} \frac{\alpha(I^{(m)})}{m}.
\]
The {\em resurgence} and the {\em asymptotic resurgence} quantify the above mentioned linear equivalence and are given by
 \[
 \rho(I)=\sup\left \{\frac{a}{b} \mid I^{(a)}\not\subseteq I^b \right \} \text{ and } \widehat{\rho}(I)=\sup\left \{\frac{a}{b} \mid I^{(at)}\not\subseteq I^{bt} \text{ for }t\gg0 \right \}.
 \]
The connections between these invariants are best cast in terms of valuations, of which the least degree of a homogeneous ideal, denoted $\alpha(-)$ is one. We defer this perspective to \Cref{ss: asymptotic}.

The following theorem compiles our main results  from \Cref{lemma: Symbolic power one diagram invariant},
\Cref{prop: sigma'}, \Cref{theorem: waldschmidt constant one diagram}, \Cref{theorem: resurgence}, \Cref{thm: symbolic Rees Sigma}.

\begin{introthm*}
Let $\Sigma\subset H_r$ be a set of Young diagrams, set $I_\Sigma=\sum_{\s\in\Sigma}I_\s$, and let $1\leq t \leq r$ be an  integer.
\begin{enumerate}
\item For every $m\geq 1$ the generalized symbolic powers are described by
\[
I_\Sigma^{(m)_t}=\sum_{\l\in\Lambda_m(\Sigma)} I_{\l_{\geq t}} = \sum_{\lambda\in\Lambda_m(\Sigma_{\leftarrow t})} I_{\lambda_{\rightarrow t}},
\] where
\begin{itemize}
\item $\Lambda_m(\Sigma)$ is the set of Young diagrams $\l$ for which the iterated Littlewood--Richardson coefficient
$c_{{\s^{(1)}}^\vee,{\s^{(2)}}^\vee,\ldots, {\s^{(m)}}^\vee}^{\l^\vee}\neq 0$ for some $\s^{(1)},\s^{(2)},\ldots, \s^{(m)}\in \Sigma$,
\item $\Sigma_{\leftarrow t}$ is obtained from $\Sigma$ by removing the leftmost $t-1$ columns of each diagram and
\item $\lambda_{\rightarrow t}$ is obtained from $\l$ by adding $t-1$ columns, each having the same height as the first column of $\l$
\end{itemize}
\item  the following generalized symbolic Rees algebra  is Noetherian
 \[ \mathcal R_s^t(I_\Sigma)=\bigoplus_{m\geq0} I_{\Sigma}^{(m)_t}T^m, \]
\item  the Waldschmidt constant for the generalized symbolic powers is
\[
\widehat{\alpha}^t(I_\Sigma)=\lim_{m\to\infty}\frac{\alpha(I_\Sigma^{(m)_t})}{m}=\frac{r \ \gamma_{t}(\Sigma)}{r-t+1}
\]
\item if all the Rees valuations of $I_\Sigma$ measure $D_j$-adic order for $1\leq j\leq r$ and $\gamma_t(\Sigma)>0$, the asymptotic resurgence controlling the linear equivalence between the topologies of generalized symbolic powers and  ordinary powers  of $I_\Sigma$ is
\[
\widehat{\rho}(I_\Sigma^{(\bullet)_t}, I_\Sigma^\bullet)=\sup\left\{ \frac{a}{b} :  I_\Sigma^{(ak)_t}\not\subseteq I_\Sigma^{bk} \text{ for }k\gg0 \right\}=\frac{(r-t+1)\gamma_1(\Sigma)}{r\gamma_t(\Sigma)},
\]
where $\gamma_1(\Sigma)$ is the minimum number of boxes among the diagrams in $\Sigma$, while $\gamma_t(\Sigma)$ is the minimum number of boxes lying weakly to the right of column $t$ among the diagrams in $\Sigma$.
\end{enumerate}
\end{introthm*}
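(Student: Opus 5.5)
The plan is to prove the four parts in order. Part (1) feeds everything else: parts (3) and (4) only use its second description, and part (2) uses its truncation form to set up a semigroup.

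\emph{Part (1).} First expand the ordinary power using Whitehead's product formula:
\[
I_\Sigma^m=\sum_{\sigma^{(1)},\dots,\sigma^{(m)}\in\Sigma} I_{\sigma^{(1)}}\cdots I_{\sigma^{(m)}}=\sum_{\lambda\in\Lambda_m(\Sigma)}I_\lambda .
\]
Next compute $I_\lambda^{(1)_t}=I_\lambda R_{D_t}\cap R=I_\lambda:D_{t-1}^\infty$ for a single diagram, which is \Cref{lemma: Symbolic power one diagram invariant}. The key input is the De Concini--Eisenbud--Procesi theory: after localizing at $D_t$, every $(t-1)$-minor becomes a unit. So the rows of $\lambda$ of length $\le t-1$ contribute units, and the rows of length $\ge t$ survive. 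I would show $I_\lambda R_{D_t}\cap R=I_{\lambda_{\ge t}}$ using the known primary decomposition of $D_\lambda$ together with the fact that $I_\lambda$ and $D_\lambda$ agree up to components with larger diagrams. Since localization and contraction commute with the finite sum, this gives the first equality.

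For the second equality (\Cref{prop: sigma'}), invert a fixed $(t-1)$-minor, say the top-left one. Row and column operations then reduce $R_{D_t}$-ideals to ideals of an $(r-t+1)\times(n-t+1)$ generic matrix. In that reduction:
\begin{itemize}
\item a diagram $\mu$ for the small matrix pulls back to $\mu_{\to t}$;
\item a diagram $\sigma$ for the big matrix restricts to $\sigma_{\leftarrow t}$.
\end{itemize}
Because the reduction is compatible with products, Whitehead's formula for the small matrix identifies $\{\lambda_{\ge t}\}$ with $\{\mu_{\to t}:\mu\in\Lambda_m(\Sigma_{\leftarrow t})\}$, up to containment-redundant terms.

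\emph{Part (2).} Pass to $U$-invariants for a maximal unipotent subgroup $U\subset G$. Then $\mathcal R_s^t(I_\Sigma)^U$ is spanned by highest weight vectors indexed by the pairs $(\mu_{\to t},m)$ with $\mu\in\Lambda_m(\Sigma_{\leftarrow t})$, and every larger diagram also occurs. The key check is that this set of pairs is closed under addition, because LR supports are additive (Horn saturation). It should therefore be the set of lattice points of a rational polyhedral cone cut out by the inequalities $|\mu_{\geq}|\ge m\gamma$-type constraints. Gordan's lemma then gives finite generation of $\mathcal R_s^t(I_\Sigma)^U$, and Grosshans' theorem lifts this to $\mathcal R_s^t(I_\Sigma)$ itself.

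\emph{Part (3).} We have $\alpha(I_\lambda)=|\lambda|$. By part (1), $\alpha(I_\Sigma^{(m)_t})$ is the minimum of $|\mu|+(t-1)\ell(\mu)$ over $\mu\in\Lambda_m(\Sigma_{\leftarrow t})$, where $\ell(\mu)$ is the number of rows of $\mu$.
\begin{itemize}
\item \emph{Lower bound.} Rows of $\mu$ have length at most $r-t+1$, so $\ell(\mu)\ge|\mu|/(r-t+1)$. Also $|\mu|\ge m\gamma_t(\Sigma)$. Together these give $\alpha(I_\Sigma^{(m)_t})\ge m\gamma_t(\Sigma)\,r/(r-t+1)$.
\item \emph{Upper bound.} Take $\sigma\in\Sigma$ attaining $\gamma_t$ and use $\sigma^{(i)}=\sigma$ for all $i$. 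Products of Schur functions contain every diagram obtained by merging rows, subject to LR constraints. This lets me exhibit a $\mu$ that is a full rectangle of width $r-t+1$ plus $O(1)$ boxes.
\end{itemize}
Dividing by $m$ and letting $m\to\infty$ gives the formula.

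\emph{Part (4).} Use the valuative formula from \Cref{ss: asymptotic}:
\[
\widehat\rho=\max_v \frac{v(I)}{\widehat v(I^{(\bullet)_t})},
\]
with $v$ ranging over the Rees valuations. By hypothesis these are the $D_j$-orders $v_j$, with $v_j(I_\lambda)=\gamma_j(\lambda)$ (the DEP formula). So I need $\widehat v_j$ of the generalized symbolic powers, computed exactly as in part (3): on diagrams $\mu_{\to t}$, the quantity $\gamma_j$ is minimized asymptotically by near-rectangles. The main obstacle is to show that the ratio is maximized at $j=1$, where $v_1=\deg$ and the ratio is $\gamma_1(\Sigma)(r-t+1)/(r\gamma_t(\Sigma))$. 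I would first try to prove the inequality
\[
\gamma_j(\Sigma)\,\widehat v_1\le \gamma_1(\Sigma)\,\widehat v_j
\]
directly, from the explicit near-rectangle values of $\widehat v_j$ and the elementary inequality $\gamma_j(\sigma)\le\frac{r-j+1}{r}|\sigma|$ for $\sigma\in H_r$.
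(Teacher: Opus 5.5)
The finite-generation argument in part (2) rests on a false claim. The weight semigroup $S_t(I_\Sigma)=\{(m,\lambda^\vee): M_\lambda\subseteq I_\Sigma^{(m)_t}\}$ is in general \emph{not} the set of lattice points of a rational polyhedral cone, so Gordan's lemma cannot be applied to it. Take $t=1$, $r=2$, $\Sigma=\{(1,1)\}$, so that $\mathcal R_s^1(I_\Sigma)$ is the ordinary Rees algebra. Since $c^{(2,2)}_{(2),(2)}=1$, we get $M_{(2,2)}\subseteq I_{(1,1)}^2$, hence $(2,(2,2))\in S_1(I_\Sigma)$. But $(1,(1,1))\notin S_1(I_\Sigma)$, because $(2)\not\supseteq(1,1)$. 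The same happens for every $t$ with $r=t+1$ and $\Sigma=\{(t,t)\}$. Closure under addition is automatic from multiplying highest-weight vectors. Horn saturation concerns scaling the LR indices themselves, so it says nothing here: what grows with $m$ is the number of factors, not their shapes.

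The idea you are missing is the paper's two-step reduction. First apply Grosshans to the \emph{ordinary} Rees algebra $\mathcal R(J)$ of the smaller ideal $J=I_{\Sigma_{\leftarrow t}}(Y)$, which is trivially finitely generated. This gives finitely many generators $(m_i,{\lambda^{(i)}}^\vee)$ of its (possibly non-saturated) weight semigroup. The truncation formula then exhibits $S_t(I_\Sigma)$ as the image under $(c,\beta)\mapsto(\sum_i c_im_i,\beta)$ of the lattice points of the auxiliary cone $\{c\geq 0,\ \beta \text{ dominant},\ \beta\geq\sum_i c_iL_t({\lambda^{(i)}}^\vee)\}$. Gordan's lemma is applied to that cone, not to $S_t(I_\Sigma)$.

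In part (3), your lower bound $|\mu_{\rightarrow t}|=|\mu|+(t-1)\ell(\mu)\geq\frac{r}{r-t+1}|\mu|\geq\frac{r\,m\,\gamma_t(\Sigma)}{r-t+1}$ is correct, and simpler than the paper's Yamanouchi-content argument. The upper bound, however, is only asserted. Recombining whole rows of copies of $\tau=\sigma_{\leftarrow t}$ (unions, or sums of rows from different copies) cannot produce near-rectangles in general. For $r=4$, $t=2$, $\sigma=(3,3)$, $\tau=(2,2)$, every row has length $2$ and merged rows must have length at most $3$. So you only get $\mu=(2^{2m})$ and $\alpha\leq 6m$. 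The true value $\widehat\alpha^2=16/3$ comes from the rectangle $(3^4)\in\Lambda_3(\tau)$, that is, from $c^{(4,4,4)}_{(2,2),(2,2),(2,2)}\neq 0$. What is needed is a genuine nonvanishing statement: the paper's appendix theorem that $c^{\rho^\vee}_{\tau^\vee,\ldots,\tau^\vee}>0$ for $r-t+1$ copies and $\rho=((r-t+1)^{|\tau|})$, proved by an explicit Yamanouchi filling. Part (4) inherits this gap through the exact value of $\widehat\alpha^t$.

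In part (4), near-rectangles also do not minimize $\gamma_j$ for $j>t$. There $\widehat{\gamma_j}^t=\gamma_j(\Sigma)$ (reduce to ordinary powers of $I_{\Sigma_{\leftarrow t}}$), so the ratio is $1$, and it is dominated because $\widehat\alpha^t\leq\alpha(I_\Sigma)$.

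Finally, in part (1), ``localization and contraction commute with finite sums'' is false in general. In $\kk[x,y,z]$ with $P=(x,y)$, one has $(x)R_P\cap R=(x)$ and $(x+yz)R_P\cap R=(x+yz)$, but $(x,yz)R_P\cap R=(x,y)$. Extension does commute with sums. Contraction then returns $\sum_\lambda I_{\lambda_{\geq t}}$ only because every associated prime of that sum lies in $D_t$. This is the paper's lemma on $\maxass(I_\Sigma)$, built on Whitehead's distributivity, and you must prove it. Also, the single-diagram step should use De Concini--Eisenbud--Procesi's decomposition of $I_\lambda$ into rectangular $D_{\lambda_i}$-primary ideals, not the primary decomposition of $D_\lambda$.
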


Note that parts (1)--(3) of the above theorem apply to all proper $G$-invariant ideals as each can be written in the form $I_\Sigma$ for some set $\Sigma$ of Young diagrams. In particular, parts (3) and (4) recover and vastly generalize the corresponding formulas $\widehat{\alpha}(D_t)=\frac{r}{r-t+1}$ and $\widehat{\rho}(D_t)=\frac{t(r-t+1)}{r}$ appearing for example in \cite[Example 3.15]{bdhmrational} and \cite[Theorem 4.1]{KumarMukundan}.
We also specialize these formulas to products of determinantal ideals
$D_\sigma$.  Even in this case the above results are new.

An ingredient of independent interest is a nonvanishing theorem for
iterated Littlewood--Richardson coefficients. This result supplies the rectangular diagrams
needed to attain the upper bounds in our computation of the skew Waldschmidt
constants. If $\sigma\in H_r$ has
$\ell=|\sigma|$ boxes, we prove in \Cref{thm: rectangle corrected} that the following  iterated Littlewood--Richardson coefficient is non-zero
\[
c_{\underbrace{\scriptstyle
\sigma^\vee,\ldots,\sigma^\vee}_{r\text{ copies}}}^{(\ell^r)}>0.
\]
Equivalently, this provides a concrete nonvanishing statement for corresponding powers of Schubert classes.   As such it may be of independent interest in Schubert calculus and intersection theory beyond its role in the proof of our asymptotic bounds.

The paper is organized as follows.  In \Cref{s: prelim} we review isotypic
ideals, Littlewood--Richardson coefficients, generalized symbolic powers,
Rees valuations, and asymptotic invariants.  In \Cref{s: symbolic} we
determine ordinary and generalized symbolic powers and establish the
truncation formula that makes these powers computationally accessible.  In
\Cref{s: asymptotic} we compute skew Waldschmidt constants and asymptotic
resurgence and then specialize the results to isotypic ideals and products of
determinantal ideals.  In \Cref{s: symbolic Rees} we prove finite generation
of the generalized symbolic Rees algebras.  The appendix proves the required
Littlewood--Richardson and Schubert-class nonvanishing theorem.

\begin{acknowledgement}
		This material is based upon work supported by the National Science Foundation under Grant No.\,DMS--1928930 and by the Alfred P. Sloan Foundation under grant G-2021-16778, while the second author was in residence at the Simons Laufer Mathematical Sciences Institute (formerly MSRI) in Berkeley, California, during the Spring 2024 semester. The second author is partially supported by NSF DMS--2401482. Macaulay2 \cite{M2} and ChatGPT-5.6 Luna were used to compute and verify examples relevant to this paper.
	\end{acknowledgement}

\section{Preliminaries}\label{s: prelim}

In this section we recall background material on Young diagrams, determinantal ideals, and
standard monomials that will be used throughout the paper. Our exposition follows
\cite{deconcini1980young, bruns2006determinantal, bruns2022determinants, FULTON_TABLEAUX}.

Throughout this paper, $\kk$ denotes a field of characteristic zero.

\subsection{Young diagrams, determinantal and isotypic ideals}

In this section we use notational conventions from \cite{deconcini1980young}.

Let $X=(x_{ij})$ be a generic $r\times n$ matrix of indeterminates, with $r\leq n$, and write $R=\kk[X]$ for the corresponding polynomial ring.
Regarding $X=(x_{ij})$ as a matrix, one can define an action of the group $G=\GL_r(\kk)\times \GL_n(\kk)$ on $R$ by the formula $(A,B)\cdot X=A^{-1}XB$. More precisely, the action of $(A,B)$ sends $x_{ij}$ to the $(i,j)$-entry of the matrix $A^{-1}XB$.

In this paper we study the ideals of $R$ that are invariant under the action of $G$. These ideals are often analyzed using methods from representation theory, which can be phrased in terms of Young diagrams.

\begin{definition}
A \emph{Young diagram} is a finite sequence $\sigma=(\sigma_1,\ldots,\sigma_s)$
of positive integers such that
$
\sigma_1 \ge \sigma_2 \ge \cdots \ge \sigma_s \ge 1.
$
We think of $\sigma$ as a sequence of rows of boxes of lengths $\sigma_1,\ldots,\sigma_s$. Thus $\sigma$ has $s$ rows and largest part $\sigma_1$, and we denote its size by $|\s|=\sum_{i=1}^s \s_i$.

The \emph{dual} (or \emph{conjugate}) partition $\sigma^\vee$ is defined by reflecting $\s$ across the main diagonal so that
\[
\sigma^\vee_j = \#\{ i \mid \sigma_i \ge j \}, \quad j = 1, 2, \dots, \sigma_1.
\]
For a positive integer $r$, we denote by $H_r$ the set of Young diagrams with at most $r$ columns, equivalently diagrams whose largest part
is at most $r$. For $\s\in H_r$, $\s^\vee$  has at most $r$ rows.
\begin{example}\label{ex: dual}
\phantom{a}
\begin{figure}[h]
\centering
\[
\sigma=\ydiagram{4,2,1}
\qquad\qquad \qquad
\sigma^\vee=\ydiagram{3,2,1,1}
\]
\caption{A Young diagram $\sigma=(4,2,1)$  and its dual
diagram $\sigma^\vee=(3,2,1,1)$.}
\end{figure}
\end{example}
For Young diagrams $\lambda=(\lambda_1,\lambda_2,\ldots)$ and $\mu=(\mu_1,\mu_2,\ldots)$, we pad the shorter sequence with zeros and write $\mu\subseteq\lambda$ when $\mu_i\leq\lambda_i$ for every $i$. If $\mu\subseteq\lambda$, the \emph{skew Young diagram} $\lambda/\mu$ is the set
\[
\lambda/\mu=\{(i,j)\mid i\geq1,\ \mu_i<j\leq\lambda_i\},
\]
obtained by removing the Young diagram of $\mu$ from the top-left corner of the Young diagram of $\lambda$.
\end{definition}

\begin{definition}\label{def: standard}
If $\sigma$ is a Young diagram and $q$ is a positive integer, then a {\em tableau} $T$ of shape $\sigma$ on the alphabet $\{1,\ldots,q\}$ is a map $T:\sigma \to \{1,\ldots,q\}$, thought of as filling the boxes of $\sigma$ with numbers between 1 and $q$. We write $T(i,j)$ for the $j$-th number in row $i$ of this filling.

The {\em canonical tableau} of shape $\sigma$ is the  tableau $C_\sigma$ whose $i$-th row is $(1,2, \ldots, \sigma_i)$.

A tableau is called {\em standard}\footnote{The notions described here would be more aptly named semistandard tableau and semistandard bitableau because they have semistandard fillings. However, we use the word standard to agree with the terminology in \cite{deconcini1980young, bruns2006determinantal, bruns2022determinants}.}   if its rows are strictly increasing sequences when read from left to right and its columns are non-decreasing sequences when read from top to bottom.
	A {\em (standard) bitableau} is a pair $(S \mid T)$ of (standard) tableaux having the same shape, where $S$ is a tableau on $\{1,\ldots,r\}$ and $T$ is a tableau on $\{1,\ldots,n\}$.
Standard bitableaux can be used to indicate products of minors of a matrix: we associate to a bitableau $(S \mid T)$ of shape $\sigma$ the following product of minors, whose sizes are the parts $\sigma_i$:
 \[
 (S \mid T) \leftrightarrow \prod_i [S(i, 1), S(i, 2), \ldots,  S(i, \sigma_i)\mid  T(i, 1), T(i, 2), \ldots, T(i,\sigma_i)],
 \]
 where $[d_1,\ldots,d_k \mid j_1,\ldots,j_k]$ denotes the minor of $X$ in rows $1\le d_1<\cdots<d_k\le r$  and columns
$1\le j_1<\cdots<j_k\le n$. We henceforth conflate bitableaux and polynomials using this identification.
\end{definition}

Products of minors indexed by Young diagrams lead naturally to determinantal ideals.

\begin{notation}
For $1\le t\le r$, we denote by $D_t$ the \textit{determinantal ideal} of $R$ generated by the $t\times t$ minors of $X$.
The ideals $D_t$ are prime and form a nested chain
$D_r \subset D_{r-1} \subset \cdots \subset D_1$.
We also set $D_0=R$.

Let $\sigma=(\sigma_1,\ldots,\sigma_s)\in H_r$. We define
\begin{equation}\label{eq: Dsigma}
D_\sigma = D_{\sigma_1}D_{\sigma_2}\cdots D_{\sigma_s}.
\end{equation}
More generally, for a subset $\Sigma\subset H_r$ we set
\begin{equation}\label{eq: DSigma}
D_\Sigma=\sum_{\sigma\in\Sigma} D_\sigma.
\end{equation}
\end{notation}

In the bulk of our paper we are concerned with a more general family of $G$-invariant ideals, which we term isotypic ideals. To define them, we recall the representation-theoretic decomposition of $R$. Set $V=\kk^r$ and $W=\kk^n$. Since the action on the matrix of variables is $(A,B)X=A^{-1}XB$, the degree-one component of $R$ is the $G$-module $V^*\otimes_\kk W$. In characteristic zero, the Cauchy decomposition \cite[Corollary 2.3.3]{Weyman} gives
\[
R=\operatorname{Sym}(V^*\otimes_\kk W)
=\bigoplus_{\sigma\in H_r}M_\sigma,
\qquad
M_\sigma
=\mathbb S_{\sigma^\vee}(V^*)\otimes_\kk
  \mathbb S_{\sigma^\vee}(W).
\]
Thus the two factors of $M_\sigma$ are irreducible representations of $\GL_r(\kk)$ and $\GL_n(\kk)$, respectively. The first factor has a basis indexed by the standard tableaux $S$ of shape $\sigma$ on $\{1,\ldots,r\}$, and the second has a basis indexed by the standard tableaux $T$ of shape $\sigma$ on $\{1,\ldots,n\}$. Under the standard-bitableau realization, the map
\[
(S\mid C_\sigma)\otimes(C_\sigma\mid T)\longmapsto(S\mid T)
\]
identifies $M_\sigma$ with the span of the standard bitableaux of shape $\sigma$; see \cite[Corollary 3.4~(2)]{deconcini1980young}.

We next consider the ideal of $R$ generated by each isotypic component $M_\s$. By  \cite[Corollary 3.4~(2)]{deconcini1980young} this ideal is the same as the ideal $I_\s$ in \Cref{def: I_sigma}.

\begin{definition}\label{def: I_sigma}
For  $\sigma\in H_r$ let $K_\s$  be the product, over the parts $\sigma_i$, of the corresponding leading principal minors
\begin{equation}\label{eq: K}
K_\sigma:= \prod_{i=1}^s [1 \ 2 \ldots \sigma_i \mid 1 \ 2 \ldots \sigma_i].
\end{equation}
Define $I_\sigma$ to be the smallest G-invariant ideal containing $K_\sigma$.

Throughout the paper, when an ideal is written as $I_\Sigma$, the set $\Sigma\subset H_r$ is taken to be a finite nonempty antichain under diagram containment, and we set $I_\Sigma=\sum_{\s\in\Sigma}I_\s$. By Dickson's lemma any collection of diagrams may be replaced by the antichain of its containment-minimal elements without changing the resulting ideal.
\end{definition}

\begin{remark}\label{rem: G-invariant}
The following properties hold:
\begin{enumerate}
\item $I_\s=\bigoplus_{\tau\supseteq\s}M_\tau$ \cite[Theorem 4.1]{deconcini1980young};
\item A $G$-submodule $I$ of $R$ is an ideal if and only if $I=I_\Sigma=\sum_{ \s\in \Sigma} I_\s$ for some subset $\Sigma\subseteq H_r$ (this follows from (1) and \cite[Corollary 4.2]{deconcini1980young}).
\end{enumerate}
\end{remark}

A crucial difference between isotypic ideals  and the determinantal ideals discussed previously in \eqref{eq: DSigma} is that the former need not admit a $\kk$-basis of standard monomials. For example, it is shown in  \cite[Example 11.8.5]{bruns2022determinants} that $I_{(1,1)}$ does not have a standard monomial basis. However  the determinantal ideals $D_\sigma$ can be recovered as the integral closures $D_\sigma=\overline{I_\sigma}$ by \cite{deconcini1980young}.

\subsection{Littlewood-Richardson coefficients}\label{s: Schur}

In this section we use the conventions from \cite{FULTON_TABLEAUX}.

A \textit{semistandard Young tableau}\footnote{Note that this is dual to our use of {\em standard tableau} in \Cref{def: standard}. We shall use the terminology standard only in the sense of \Cref{def: standard} in this paper.} of shape $\lambda$ on the alphabet $\{1,\ldots,r\}$ is a filling of the boxes of $\lambda$ with entries in $\{1,\ldots,r\}$ that are weakly increasing from left to right along each row and strictly increasing from top to bottom along each column. We write $H_r^\vee$ for the set of Young diagrams having at most $r$ rows as these are the conjugates of elements in $H_r$ and ${\rm SSY}(\l)$ for the set of semistandard Young tableaux of shape $\l\in H_r^\vee$.

\begin{definition}
Let $\lambda\in H_r^\vee$. The \emph{Schur polynomial} in $r$ variables is
\[
s_\lambda(x_1,\ldots,x_r)=\sum_{T\in {\rm SSY}(\l)} x^T\in\kk[x_1,\ldots,x_r],
\]
where the sum ranges over all semistandard Young tableaux $T$ of shape $\lambda$ on $\{1,\ldots,r\}$ and
\[
x^T=\prod_{i=1}^r x_i^{\,\#\{\text{entries of $T$ equal to $i$}\}}.
\]
For $\mu,\nu\in H_r^\vee$, the product of Schur polynomials has the expansion
\begin{equation}\label{eq: Schur}
s_\mu(x_1,\ldots,x_r)s_\nu(x_1,\ldots,x_r)
=\sum_{\lambda\in H_r^\vee} c_{\mu\nu}^{\lambda}s_\lambda(x_1,\ldots,x_r),
\end{equation}
where the coefficients $c_{\mu\nu}^{\lambda}$ are the nonnegative integers called the \emph{Littlewood--Richardson coefficients}. More generally, for $\mu^{(1)},\ldots,\mu^{(q)}\in H_r^\vee$, we define the iterated Littlewood--Richardson coefficients by
\begin{equation}\label{eq: iterated Schur}
\prod_{i=1}^q s_{\mu^{(i)}}(x_1,\ldots,x_r)
=\sum_{\lambda\in H_r^\vee}
c_{\mu^{(1)}\mu^{(2)}\cdots\mu^{(q)}}^{\lambda}
s_\lambda(x_1,\ldots,x_r).
\end{equation}
\end{definition}

\begin{definition}
The \emph{row-reading word} of a tableau is the sequence obtained by reading its entries row by row, from left to right within each row, starting with the bottom row and proceeding upward to the top row. The \emph{column-reading word} of a tableau is the sequence obtained by reading its entries column by column, from bottom to top within each column, starting with the leftmost column and proceeding towards the rightmost.

A sequence of integers is called a {\em lattice word} or  \emph{Yamanouchi word} if for each $k$, the multiset consisting of the first $k$ entries contains at least as many $i$'s as $j$'s for all $i<j$. It is called a {\em reverse lattice word} if its reverse is a lattice word.

A (skew) tableau is called a \emph{Littlewood--Richardson tableau} if it is semistandard and its row-reading word is a reverse lattice word.

A (skew) tableau  is said to be \emph{Yamanouchi} if it is standard in the sense of \Cref{def: standard} and  the {\em column-reading word} of the tableau is a Yamanouchi word.
\end{definition}

The Littlewood--Richardson coefficients $c_{\mu \nu}^{\lambda}$ can  be interpreted combinatorially as the number of Littlewood--Richardson skew tableaux of shape $\lambda / \mu$ and content $\nu$ \cite[Proposition 3, p.~64]{FULTON_TABLEAUX}. This description is known as the \emph{Littlewood--Richardson rule}.

\begin{example}
The row-reading word of the skew tableau shown below on the left is $3,\,2,\,3,\,1,\,2,\,1$. It is a reverse lattice word.
The column-reading word of the skew tableau shown below on the right is $1,\,2,\,1,\,3,\,2,\,3$. It is  a Yamanouchi word.
\begin{figure}[h]
\centering
\[
\begin{ytableau}
\none & \none & 1  \\
\none & 1 & 2 \\
 2 & 3  \\
3
\end{ytableau}
\qquad
\begin{ytableau}
\none & \none & 2 & 3   \\
\none & 1 & 3  \\
1 & 2 \\
\end{ytableau}
\]

\caption{A Littlewood-Richardson skew tableau of shape $(3,3,2,1)/(2,1)$ on the left and a Yamanouchi skew tableau of shape $(4,3,2)/(2,1)$ on the right.}\label{fig 3}
\end{figure}
\end{example}

We will use a dual interpretation of Littlewood--Richardson coefficients in terms of Yamanouchi tableaux in \Cref{lemma Lambda=littlewood rich}.
The notions presented above are dual in the following sense: under dualization of tableaux as in \Cref{ex: dual}, row-increasing tableaux become column-increasing tableaux, column-reading words become the reverse of row-reading words, and the Yamanouchi condition becomes the Littlewood--Richardson condition. That is, a (skew) tableau $\l$ is Yamanouchi if and only if its dual $\l^\vee$ is Littlewood--Richardson. The skew tableaux in \Cref{fig 3} are dual to each other.

\subsection{Symbolic powers and Rees valuations}

Since the determinantal ideals $D_t$ with $1\leq t\leq r$ are the only prime $G$-invariant ideals \cite[Theorem 5.2]{deconcini1980young} and they form the chain $D_r\subseteq D_{r-1}\subseteq\cdots\subseteq D_2\subseteq D_1$, all $G$-invariant ideals can be interpreted as scheme structures  on a variety of matrices of rank $<t$ with embedded components. In this section we study  controlled fattenings of these scheme structures. Algebraically these are determined by symbolic power ideals.

For an ideal $I$ of a commutative ring $R$, we use the conventions
\[
\Ass(I):=\Ass_R(R/I),\qquad \Min(I):=\Min_R(R/I).
\]
Two conventions for the $m$-th {\em symbolic power} are common:
\begin{equation}\label{eq: symbolic_power}
I_{\Ass}^{(m)}=\bigcap_{P\in \Ass(I)} \left(I^m R_P\cap R \right) \quad \text{or} \quad I_{\Min}^{(m)}=\bigcap_{P\in \Min(I)} I^m R_P\cap R.
\end{equation}
To study both versions of symbolic powers simultaneously we introduce the following generalization.

\begin{definition}\label{def: symbolic_power}
Let $I$ be a $G$-invariant ideal and let $m\geq 1$ and $1\leq t\leq r$ be integers. We define the $m$-th {\em symbolic power of $I$ with respect to $D_t$} as follows
\[
I^{(m)_t}= I^m R_{D_t}\cap R.
\]
\end{definition}
We will see in \Cref{lem: symbolic_powers_maxass} that \Cref{def: symbolic_power} recovers $I_{\Ass}^{(m)}$ and $I_{\Min}^{(m)}$ in  \eqref{eq: symbolic_power} when applied for the smallest and the largest values of $t$ such that $D_t\in \Ass(I)$, respectively, and that it can also be viewed as the saturation of $I^m$ with respect to $D_{t-1}$. From this point of view one can interpret the relative notion of symbolic powers in \Cref{def: symbolic_power} as a bridge between the two more common notions of symbolic powers from the literature as well as a generalization of the saturation with respect to the homogeneous maximal ideal (the case $t=2$).

To study the symbolic powers of determinantal ideals $D_t$ one  defines valuations on the quotient field ${\rm Frac}(R)$ of $R$ as follows:
\begin{equation}\label{eq: nu}
\gamma_{t}(x)=\max \left \{i \geq 0: x\in D_t^{(i)} \right \} \text{ for } 0\neq x\in R.
\end{equation}
The function $\gamma_t$ is a valuation since the associated graded ring of $R_{D_t}$ at its maximal ideal is a domain \cite[\S9--10]{bruns2006determinantal}. To be able to effectively compute the valuations in \eqref{eq: nu} one utilizes the theory of standard monomials and the following combinatorial function on Young diagrams.

\begin{definition}
Let $\sigma=(\sigma_1,\ldots,\sigma_s)\in H_r$ and let $t\in \mathbb{N}$. We define the function $\gamma_t:H_r\to \NN$ by
\begin{equation}\label{eq: gamma}
\gamma_t(\sigma)=\sum_{i=1}^s \max\{0,\sigma_i-t+1\}.
\end{equation}
The function $\gamma_t(\sigma)$ counts the number of boxes of $\sigma$ lying weakly to the
right of the $t$-th column. For a finite nonempty set $\Sigma\subset H_r$, we define
\[
\gamma_t(\Sigma):=\min\{\gamma_t(\sigma):\sigma\in\Sigma\}.
\]
More generally, for a valuation $v$ and a nonzero ideal $J$, we employ the convention $v(J):=\min\{v(f):0\neq f\in J\}$. Since $\gamma_t(I_\s)=\gamma_t(\s)$ by \cite[Theorems~4.1 and 7.1(1)]{deconcini1980young} (see also \Cref{lemma: Symbolic power one diagram invariant}~(3)), we have  $\gamma_t(I_\Sigma)=\gamma_t(\Sigma)$.
\end{definition}

\begin{example}
\phantom{a}
\begin{figure}[h]
\centering
\begin{ytableau}
  \, & \, & *(gray) & *(gray) & *(gray)\\
  \, & \, &  *(gray) & *(gray)\\
  \, & \, \\
\end{ytableau}
\caption{The shaded boxes contribute to $\gamma_3(5,4,2)=5$.}
\end{figure}
\end{example}

The valuations $\gamma_t$ in \eqref{eq: nu} can be extended to the polynomial ring $R[T]$ and yield a representation of the Rees algebra of $D_t$  as an intersection of $R[T]$ with finitely many valuation rings. This idea extends to the notion of Rees valuations for arbitrary ideals.

\begin{def-prop}\label{def: Rees_valuations}
Let $R$ be a Noetherian domain and $I$ an ideal of $R$. Then there exist finitely many discrete valuation rings $V_1 ,\ldots , V_s$ of rank one with $R\subset V_i \subset {\rm Frac}(R)$ such that
\begin{enumerate}
\item For all $n\in \NN$, $\overline{I^n}=\bigcap_{i=1}^s (I^nV_i\cap R)$;
\item The set $\{V_1,\ldots ,V_s\}$ satisfying (1) is minimal possible.
\end{enumerate}
The  valuations corresponding to $V_1, \ldots, V_s$ are unique and they are called the {\em Rees valuations} of $I$.
\end{def-prop}

As pioneered by Kaveh--Khovanskii \cite{KAVEH_KHOVANSKII} and Lazarsfeld--Musta\c{t}\u{a} \cite{lazarsfeld09}, it is fruitful to utilize valuations to construct convex bodies in Euclidean space called Newton-Okounkov bodies.
Using the functions $\gamma_t$ as coordinates one can define a map $\gamma=(\gamma_1, \ldots, \gamma_r):H_r\to \NN^r$. These functions can be used to encode collections of Young diagrams as convex sets in $\RR^r$ using the following method.

\begin{notation}\label{not: P_sigma}
If $\Sigma\subset H_r$ is a collection of Young diagrams, we define a convex set
\[
P_\Sigma=\operatorname{conv}\bigl\{(\gamma_1(\sigma),\gamma_2(\sigma),\ldots,\gamma_r(\sigma))
\mid \sigma\in\Sigma\bigr\}\subset \mathbb{R}^r.
\]
\end{notation}

If $\Sigma$ is finite, then $P_\Sigma$ is a polytope. This polytope captures information about the integral closure of the $G$-invariant ideal associated to $\Sigma$.
By  \cite[Theorem 8.1]{deconcini1980young},
 the integral closure of $I_\s$ is $\overline{I_\s}=D_\s$ and by  \cite[Theorem 8.2]{deconcini1980young} (see also \cite[Theorem 2.3]{HV})
 the integral closure of the $G$-invariant ideal $I_\Sigma=\sum_{ \s\in \Sigma} I_\s$ is computed as follows:
\begin{equation}\label{eq: integral closure}
\overline{I_\Sigma}
=\overline{\sum_{\sigma\in\Sigma}I_\sigma}
=\overline{\sum_{\sigma\in\Sigma}D_\sigma}
=\sum_{\substack{\tau\in H_r\\
\gamma(\tau)\in P_\Sigma+\RR_{\geq0}^r}}I_\tau.
\end{equation}
Thus the indexing condition in the final sum is membership of the valuation vector $\gamma(\tau)$ in the Newton polyhedron $P_\Sigma+\RR_{\geq0}^r$. Note that taking integral closure of the sum of the $D_\sigma$ is necessary, since such a sum need not be integrally closed.

The next result describes the Rees valuations of determinantal ideals in terms of these polytopes.

\begin{theorem}[{\cite[Theorem 3.13]{bdhmrational}}]\label{thm: Rees_valuations}
Let $\Sigma\subset H_r$ be finite and nonempty, and let $D_\Sigma=\sum_{\sigma\in\Sigma}D_\sigma$. For each non-coordinate facet $F$ of
$
P_\Sigma+\RR_{\geq0}^r
$
whose supporting hyperplane has positive constant term, write its equation uniquely as
$
h_1x_1+\cdots+h_rx_r=c,
$
where $h=(h_1,\ldots,h_r)\in\ZZ_{\geq0}^r$ is the primitive inward normal and $c\in\ZZ_{>0}$. Then the valuation associated to $F$ is
\[
v_F=\sum_{i=1}^r h_i\gamma_i,
\]
and the Rees valuations of $D_\Sigma$  are the irredundant valuations arising in this way.  \end{theorem}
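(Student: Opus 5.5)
The plan is to turn the integral-closure formula \eqref{eq: integral closure} into a description of $\overline{D_\Sigma^k}$ by finitely many valuative inequalities. Then I would check that these inequalities are exactly the ones coming from the non-coordinate facets of the Newton polyhedron, and that none of them can be dropped.

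\emph{Step 1: powers.} Since $D_\sigma D_\tau=D_{\sigma\cup\tau}$, where $\sigma\cup\tau$ concatenates and reorders the rows, $D_\Sigma^k=D_{\Sigma^{*k}}$ with $\Sigma^{*k}$ the set of $k$-fold row-unions. Each $\gamma_i$ is additive under row union, so $P_{\Sigma^{*k}}$ has the same vertices as $kP_\Sigma$. Hence \eqref{eq: integral closure} gives
\[
\overline{D_\Sigma^k}=\sum_{\gamma(\tau)\in kP_\Sigma+\RR^r_{\ge0}}I_\tau .
\]
Write $N=P_\Sigma+\RR^r_{\ge0}$ as the intersection of the coordinate halfspaces with the halfspaces $h_F\cdot x\ge c_F$, one for each non-coordinate facet $F$. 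Facets through the origin impose nothing beyond nonnegativity, which is why only facets with $c_F>0$ are kept. Since $\gamma(\tau)\ge0$ automatically, $\gamma(\tau)\in kN$ if and only if $h_F\cdot\gamma(\tau)\ge kc_F$ for every such $F$.

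\emph{Step 2: the filtration ideals.} For each $F$, consider $J_{F,k}=\{f: v_F(f)\ge kc_F\}$. Each $D_t^{(i)}$ is $G$-invariant, so each $\gamma_i$ is $G$-invariant, hence $v_F$ is too. Thus $J_{F,k}$ is a $G$-invariant ideal, and by \Cref{rem: G-invariant} it is a sum of isotypic ideals $I_\tau$.

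To decide which $I_\tau$ it contains, I would show $v_F(I_\tau)=h_F\cdot\gamma(\tau)$. The lower bound follows from $\gamma_i(I_\tau)=\gamma_i(\tau)$ for every $i$. For the upper bound, the single product of minors $K_\tau$ attains $\gamma_i(K_\tau)=\gamma_i(\tau)$ simultaneously for all $i$, by the standard-monomial computation of de Concini--Eisenbud--Procesi. Therefore $\bigcap_F J_{F,k}=\overline{D_\Sigma^k}$.

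Moreover, $v_F(D_\Sigma)=\min_{\sigma\in\Sigma}h_F\cdot\gamma(\sigma)=c_F$, since $F$ supports $N$ and contains a vertex $\gamma(\sigma)$. So $J_{F,k}=D_\Sigma^kV_F\cap R$, provided $v_F$ is a valuation with valuation ring $V_F$. This gives property (1) of \Cref{def: Rees_valuations}.

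\emph{Step 3: $v_F$ is a valuation.} A nonnegative combination of valuations is in general only an order function, not a valuation, so this step is the main obstacle. The first approach I would try is to show that the multi-filtration of $R$ by $(D_1^{(a_1)}\cap\cdots\cap D_r^{(a_r)})$ has a multigraded associated ring that is a domain. This should follow from standard monomial theory: a product of standard bitableaux has leading term with additive $\gamma$-vector, in the same way one shows each $\gamma_t$ is a valuation. From this, $v_F(fg)=v_F(f)+v_F(g)$ follows for every nonnegative weight $h$. Failing that, I would identify $v_F$ as the order along a torus-invariant divisor of the normalized blowup of $D_\Sigma$, working in a toric degeneration given by the standard monomial basis.

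\emph{Step 4: irredundancy and uniqueness.} Rees valuations are unique by general theory. It therefore remains to show that no $v_F$ can be removed, i.e.\ that for each facet $F$ some $f$ satisfies $v_{F'}(f)\ge kc_{F'}$ for all $F'\neq F$ but $v_F(f)<kc_F$.

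For this I would pick a lattice point $p$ just outside $kN$ across the relative interior of $F$, for $k$ large, and realize $p$ (or a nearby point) as $\gamma(\tau)$ for some $\tau\in H_r$. Here I would use that the image of $\gamma$ is the set of integer vectors in a rational cone that contains $N$ in its relevant part. If the facet's inequality is implied by the other valuations, for instance when two facets yield proportional valuations, it is discarded; this is the meaning of ``irredundant'' in the statement. Then $K_\tau$ is the required witness.
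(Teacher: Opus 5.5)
Your outline has a genuine gap at the step you flag as the main obstacle. The paper itself gives no proof of this statement (it is quoted from \cite{bdhmrational}), so I judge the outline on its own. Step 1 and the bookkeeping in Step 2 are fine. The problem is that you read $v_F=\sum_i h_i\gamma_i$ as the pointwise function $f\mapsto\sum_i h_i\gamma_i(f)$, and for that function the diagnosis is backwards. A nonnegative combination of valuations is automatically additive on products, because each $\gamma_i$ is. What fails, as soon as $h$ has two nonzero entries, is the ultrametric inequality.

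Here is a concrete case. Take $r=2$ and $\Sigma=\{(1,1,1),(2)\}$, so $\gamma(\Sigma)=\{(3,0),(2,1)\}$ and $x_1+x_2=3$ is a facet with $h=(1,1)$ and $c_F=3$. For $f=x_{11}^3$ and $g=[1\,2\mid 1\,2]$ the pointwise values are $3+0=3$ and $2+1=3$. But $f+g\in D_1^2\setminus D_1^3$ and $f+g\notin D_2$, so $f+g$ has value $2$. Hence $J_{F,1}$ is not closed under addition, and $v_F(D_\Sigma)\le 2<c_F$. Both claims of Step 2 therefore fail for the function you named. Step 3, as formulated, would prove an identity that already holds, while the property that actually fails cannot be proved because it is false.

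The missing idea is the correct definition of $v_F$. Write $f=\sum_\tau f_\tau$ with $f_\tau\in M_\tau$, and set $v_F(f)=\min\{h\cdot\gamma(\tau): f_\tau\neq 0\}$.
\begin{itemize}
\item Since $D_i^{(a)}=\bigoplus_{\gamma_i(\tau)\ge a}M_\tau$, one has $\gamma_i(f)=\min\{\gamma_i(\tau):f_\tau\neq0\}$. So this $v_F$ agrees with $\sum_i h_i\gamma_i$ on each $M_\tau$ but can exceed the pointwise sum.
\item This is the reading under which the statement holds, and the one used in \Cref{ex: mixed Rees valuation counterexample}. There $v(I_\Sigma)=10$, whereas the pointwise sum takes the value $6$ on $K_{(3,3)}+K_{(2,1^8)}\in I_\Sigma$.
\item With this definition the ultrametric inequality and $\{v_F\ge c\}=\bigoplus_{h\cdot\gamma(\tau)\ge c}M_\tau$ are immediate, and your Step 2 goes through verbatim.
\end{itemize}
The real content is then additivity on products. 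For nonzero $f\in M_\sigma$ and $g\in M_\tau$, $G$-equivariance shows that every component $M_\lambda$ of $fg$ has $c^{\lambda^\vee}_{\sigma^\vee,\tau^\vee}\neq0$. Hence $\lambda^\vee\unlhd\sigma^\vee+\tau^\vee$, and so $\gamma(\lambda)\ge\gamma(\sigma)+\gamma(\tau)$ coordinatewise, with equality only for $\lambda=\sigma\cup\tau$. Moreover the $M_{\sigma\cup\tau}$-component of $fg$ is nonzero, since a Cartan product of nonzero vectors is nonzero.

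Now fix a total group order on $\ZZ^r$ that refines $x\mapsto h\cdot x$ and is positive on $\ZZ^r_{\ge0}\setminus\{0\}$; for example, compare $h\cdot x$, then $x_1+\cdots+x_r$, then lexicographically. Since $\gamma$ is injective on $H_r$, the minimal shapes $\sigma_0$ of $f$ and $\tau_0$ of $g$ are unique. The facts above show that only $f_{\sigma_0}g_{\tau_0}$ reaches $\sigma_0\cup\tau_0$, and that this is the minimal shape of $fg$. This gives $v_F(fg)=v_F(f)+v_F(g)$. This is exactly what your ``domain associated graded'' idea delivers, but it must be applied to this function, not to $\sum_i h_i\gamma_i(f)$.

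Step 4 needs no witnesses, and the witness construction cannot work for every facet. The set $\gamma(H_r)$ consists of the lattice points of the cone $x_1-x_2\ge x_2-x_3\ge\cdots\ge x_{r-1}-x_r\ge x_r\ge0$, and this cone does not contain $P_\Sigma+\RR^r_{\ge0}$. For $r=2$ and $\Sigma=\{(2),(1^5)\}$, the facet $x_1=2$ meets the cone only at $(2,1)$, and $\gamma_1$ is indeed redundant next to $\gamma_1+3\gamma_2$. Once the whole family $\{v_F\}$ satisfies property (1) of \Cref{def: Rees_valuations}, uniqueness of Rees valuations shows that any minimal subfamily with that property is the set of Rees valuations. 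That is precisely the ``irredundant'' formulation in the statement.
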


We refer to a hyperplane as a coordinate hyperplane if it has the equation $x_i=0$. We refer to a valuation as a {\em coordinate valuation} if it is equal to $\gamma_i$ for some $i$. Thus coordinate valuations can correspond to non-coordinate hyperplanes $x_i=c>0$.

\subsection{Asymptotic invariants of symbolic powers}\label{ss: asymptotic}

 We define the resurgence and asymptotic resurgence numbers associated to a pair of graded families of ideals. These notions generalize the well-studied resurgence and asymptotic resurgence of an ideal in a polynomial ring as presented in  \Cref{s: intro}. A collection $\a_\bullet = \{\a_i\}_{i\geq 1}$ of ideals in $R$ is called a graded family if $\a_i\a_j \subseteq \a_{i+j}$ for all $i,j \geq 1$. A graded family $\a_\bullet$ is called a filtration if $\a_i \supseteq \a_{i+1}$ for all $i \geq1$.

 \begin{definition}
 Let $\a_\bullet$ and $\b_\bullet$ be graded families of ideals of $R$. Define the generalized {\em resurgence} and {\em asymptotic resurgence} for the given pair of families as
\[
 \rho(\a_\bullet, \b_\bullet)=\sup\left \{\frac{u}{v} \mid u,v\in\mathbb Z_{>0}, \a_u\not\subseteq \b_v\right \} \text{ and } \widehat{\rho}(\a_\bullet,\b_\bullet)=\sup\left \{\frac{u}{v} \mid u,v\in\mathbb Z_{>0}, \a_{ut} \not\subseteq \b_{vt} \text{ for }t\gg0 \right \}.
 \]
 \end{definition}
We will be interested in situations where $\a_\bullet$ is the family of generalized symbolic powers defined in \Cref{def: symbolic_power} and $\b_\bullet$ is the family of ordinary powers,  computing $ \widehat{\rho}(I^{(\bullet)_t},I^\bullet)$ for a $G$-invariant ideal $I$.

We recall for future use a valuation-theoretic description of asymptotic resurgence. In the case where $\a_\bullet$ consists of the classical symbolic powers, the theorem below was first discovered by DiPasquale--Francisco--Mermin--Schweig in \cite[Theorem~4.10]{dipasquale2019asymptotic}. We use the following special case of  a more encompassing result due to H\`a-Kumar-Nguyen-Nguyen in \cite{HKNN}.

\begin{theorem}[{\cite[Theorem 2.13 and Corollary 4.11]{HKNN}}]\label{thm: DFMS}
Let $\ba_\bullet$ be a filtration of nonzero ideals. Let $I$ be an ideal, $\b_\bullet$ given by $\b_w=I^w$ be the family of ordinary powers of $I$, and let $v_1,\ldots,v_s$ be the Rees valuations of $I$. Then
the asymptotic resurgence of $\a_\bullet$ with respect to both the family of powers of $I$, $I^\bullet=\{I^m\}_{m\geq 1}$, and their integral closures $\overline{I^\bullet}=\{\overline{I^m}\}_{m\geq 1}$  is given by
\[
\widehat{\rho}(\a_\bullet, I^\bullet)=\widehat{\rho}(\a_\bullet, \overline{I^\bullet})={\rho}(\a_\bullet, \overline{I^\bullet})=\max_{1\le i\le s}
\left\{\frac{v_i(I)}{\widehat{v}_i(\a_\bullet)}\right\},
\]
where
\[
\widehat{v}_i(\a_\bullet)=\lim_{m\to\infty}\frac{v_i(\a_m)}{m}.
\]
\end{theorem}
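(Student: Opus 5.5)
The plan is to prove the chain
\[
\max_i\frac{v_i(I)}{\widehat v_i(\a_\bullet)}\;\le\;\widehat\rho(\a_\bullet,\overline{I^\bullet})\;\le\;\widehat\rho(\a_\bullet,I^\bullet)\;\le\;\max_i\frac{v_i(I)}{\widehat v_i(\a_\bullet)},
\]
and to insert $\rho(\a_\bullet,\overline{I^\bullet})$ separately by a squeeze. Throughout, write $M$ for the right-hand side, with the conventions $v_i(I)/0=\infty$ and $v_i(I)/\infty=0$.

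\emph{Properties of $v_i(\a_m)$.} Since $\a_j\a_k\subseteq \a_{j+k}$ and $v_i$ is a valuation, the sequence $m\mapsto v_i(\a_m)$ is subadditive. By Fekete's lemma the limit $\widehat v_i(\a_\bullet)$ exists, possibly $0$, and equals $\inf_m v_i(\a_m)/m$. In particular
\[
v_i(\a_m)\ge m\,\widehat v_i(\a_\bullet)\quad\text{for every } m.
\]

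\emph{Rees valuations and integral closure.} From \Cref{def: Rees_valuations} one has $I^wV_i\cap R=\{f : v_i(f)\ge w\,v_i(I)\}$, because $I^wV_i$ is the ideal of $V_i$ of elements of value at least $w\,v_i(I)$. Hence
\[
\overline{I^w}=\{f\in R : v_i(f)\ge w\,v_i(I)\ \text{for all } i\}.
\]

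\emph{Lower bound.} Suppose $u/v<M$ and pick $i$ attaining the maximum, so that $u\,\widehat v_i<v\,v_i(I)$. Since $v_i(\a_{ut})/(ut)\to\widehat v_i$, we get $v_i(\a_{ut})<vt\,v_i(I)$ for all $t\gg0$. For such $t$, some element of $\a_{ut}$ lies outside $\overline{I^{vt}}$, and therefore also outside $I^{vt}$. This gives $\widehat\rho(\a_\bullet,\overline{I^\bullet})\ge M$. The inclusion $I^{vt}\subseteq\overline{I^{vt}}$ gives $\widehat\rho(\a_\bullet,I^\bullet)\ge\widehat\rho(\a_\bullet,\overline{I^\bullet})$. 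Moreover $\rho\ge\widehat\rho$ trivially, since any single witness $t$ yields a pair $(ut,vt)$ with the same ratio.

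\emph{Upper bound for $\rho(\a_\bullet,\overline{I^\bullet})$.} Suppose $u/v\ge M$; this forces every $\widehat v_i>0$. Then
\[
v_i(\a_u)\ge u\,\widehat v_i\ge v\,v_i(I)\quad\text{for all } i,
\]
so $\a_u\subseteq\overline{I^v}$. Hence $\rho(\a_\bullet,\overline{I^\bullet})\le M$, and the squeeze gives $\widehat\rho(\a_\bullet,\overline{I^\bullet})=\rho(\a_\bullet,\overline{I^\bullet})=M$.

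\emph{Upper bound for $\widehat\rho(\a_\bullet,I^\bullet)$.} This step must compare integral closures with actual powers. I would use a uniform Briançon--Skoda/Rees type bound: there is a constant $k$ with $\overline{I^{w+k}}\subseteq I^w$ for all $w$. For polynomial rings, $k=\dim R$ works by Lipman--Sathaye; in general this holds for analytically unramified rings. Now let $u/v>M$ and set $c=\min_i \widehat v_i/v_i(I)$, so that $u c>v$. The valuation estimate above gives
\[
\a_{ut}\subseteq\overline{I^{\lfloor utc\rfloor}}.
\]
Since $\lfloor utc\rfloor\ge vt+k$ once $t\gg0$, we obtain $\a_{ut}\subseteq I^{vt}$ for $t\gg0$. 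Thus no ratio larger than $M$ occurs, and $\widehat\rho(\a_\bullet,I^\bullet)\le M$.

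The main obstacle is this last step. It relies on the uniform comparison $\overline{I^{w+k}}\subseteq I^w$, which is exactly why the statement uses the asymptotic resurgence for $I^\bullet$ rather than the honest resurgence $\rho$. It is also the one place where a hypothesis on the ring, such as regularity or analytic unramifiedness, actually enters.
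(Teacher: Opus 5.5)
Your proof is correct. The paper does not prove this statement itself; it quotes it from \cite{HKNN}, which extends \cite[Theorem~4.10]{dipasquale2019asymptotic}. Your argument is essentially the standard one behind those results, and it goes through in three steps. First, Fekete's lemma gives $v_i(\a_m)\ge m\,\widehat v_i(\a_\bullet)$. Second, the valuative description $\overline{I^w}=\{f\in R: v_i(f)\ge w\,v_i(I)\text{ for all } i\}$ yields both inequalities for $\widehat\rho(\a_\bullet,\overline{I^\bullet})$ and for $\rho(\a_\bullet,\overline{I^\bullet})$. Third, a uniform comparison $\overline{I^{w+k}}\subseteq I^w$ transfers the upper bound to ordinary powers.

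Writing the argument out, rather than citing it, buys two things. It shows that only the graded-family property and $\a_m\neq 0$ are used; the decreasing condition $\a_{m+1}\subseteq\a_m$ never enters. It also isolates the single ring-theoretic input. For $R=\kk[X]$ that input is supplied by Brian\c{c}on--Skoda after localizing, or by module-finiteness of the normalization of the Rees algebra.

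One fact you use tacitly should be stated: each Rees valuation satisfies $v_i(I)>0$. Otherwise $I^wV_i\cap R=R$ for all $w$, so $V_i$ would be redundant, contradicting the minimality in \Cref{def: Rees_valuations}. This positivity is what makes $u/v\ge M$ force $\widehat v_i(\a_\bullet)>0$, and what makes the case $\widehat v_i(\a_\bullet)=0$ correspond to $M=\infty$.
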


Because of the above considerations it becomes important to develop methods for determining the asymptotic behavior of valuations as in the preceding displayed limit. The most common valuation of a homogeneous ideal is given by the order defined by the powers of the homogeneous maximal ideal, equivalently for the standard grading by the smallest degree of  its elements. Considering this valuation leads to another important asymptotic invariant.

\begin{definition}
\label{def:waldschmidt}
Denote by $\alpha(I)$ the smallest degree of a nonzero homogeneous form in a homogeneous ideal $I$. The {\em Waldschmidt constant} of a graded family $\a_\bullet$ is the real number
\[
\widehat{\alpha}(\a_\bullet)=\lim_{m\to\infty}\frac{\alpha(\a_m)}{m}=\inf_{m\geq1}\frac{\alpha(\a_m)}{m}.
\]
Similarly, if $v$ is any valuation, one calls the limit below a {\em skew Waldschmidt constant} of $\a_\bullet$:
\[
\widehat{v}(\a_\bullet)=\lim_{m\to\infty}\frac{v(\a_m)}{m}=\inf_{m\geq1}\frac{v(\a_m)}{m}.
\]
The existence of these limits follows from subadditivity and Fekete's lemma.
\end{definition}

\section{Symbolic Powers of GL-Invariant Ideals}\label{s: symbolic}

\subsection{Generalized symbolic powers}

In this section we show how the notion of generalized symbolic powers from \Cref{def: symbolic_power}  encompasses the ideals typically called symbolic powers.

For a proper ideal $I$ of $R$ we denote by $\maxass(I)$ the set of maximal elements under containment of the set of associated primes $\Ass(I)$. Our interest in these primes is spurred by the following connection with  symbolic powers.

\begin{lemma}\label{lem: symbolic_powers_maxass}
For any positive integer $m$, the corresponding symbolic power of $I$ is described by
\begin{equation}\label{eq: maxass}
 I_{\Ass}^{(m)}=\bigcap_{P\in \maxass(I)} I^mR_P\cap R.
 \end{equation}
In particular, if $I$ is $G$-invariant and $t_0=\min\{t\mid D_t\in \Ass(I)\}$ and $t_1=\max\{t \mid D_t\in \Ass(I)\}$, then with notation as in \Cref{def: symbolic_power} we have
\[
 I_{\Ass}^{(m)}= I^{(m)_{t_0}}, \quad   I_{\Min}^{(m)}= I^{(m)_{t_1}} \quad \text{and} \quad I^{(m)_t}=I^m:D_{t-1}^{\infty}.
 \]
\end{lemma}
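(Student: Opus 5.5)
We prove the first identity \eqref{eq: maxass} by a localization argument that holds for any ideal in a Noetherian ring. The $G$-invariant statements then follow because the only candidate associated primes are the determinantal ideals $D_t$, and these form a chain.

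\textbf{The identity \eqref{eq: maxass}.} Let $P\subseteq Q$ be primes in $\Ass(I)$. Since $R_P$ is a localization of $R_Q$, any $x\in I^mR_Q\cap R$ lies in $I^mR_P\cap R$. Hence $I^mR_Q\cap R\subseteq I^mR_P\cap R$. So every term of the intersection defining $I^{(m)}_{\Ass}$ indexed by a non-maximal associated prime contains the term indexed by some maximal associated prime above it. Such a prime exists because $\Ass(I)$ is finite. Discarding the redundant terms gives \eqref{eq: maxass}.

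\textbf{The case of $G$-invariant $I$.} The associated primes of a $G$-invariant ideal are $G$-invariant. The connected group $G$ acts on the finite set $\Ass(I)$, so each associated prime is fixed. By \cite[Theorem 5.2]{deconcini1980young}, each associated prime is therefore some $D_t$. Because $D_r\subset\cdots\subset D_1$ is a chain, $\maxass(I)$ is the single prime $D_{t_0}$, the largest ideal, which has the smallest index. Then \eqref{eq: maxass} gives $I^{(m)}_{\Ass}=I^mR_{D_{t_0}}\cap R=I^{(m)_{t_0}}$. Likewise $\Min(I)=\{D_{t_1}\}$, the smallest associated prime, so $I^{(m)}_{\Min}=I^{(m)_{t_1}}$.

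\textbf{The saturation formula.} Consider a primary decomposition of $I^m$. The ideal $I^m$ is again $G$-invariant, so its associated primes are all of the form $D_s$. The localization $I^mR_{D_t}\cap R$ is the intersection of the primary components whose associated prime is contained in $D_t$, that is, those $D_s$ with $s\ge t$.

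The saturation $I^m:D_{t-1}^\infty$ is the intersection of the primary components whose prime does not contain $D_{t-1}$. For the chain this means $D_s\not\supseteq D_{t-1}$, which holds exactly when $s\ge t$. The two index sets coincide, so the two ideals are equal. When $t=1$ we have $D_0=R$, and both sides equal $I^m$ because every $D_s\subseteq D_1$.

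For the component-selection facts we rely on the standard results: $JR_P\cap R$ equals the intersection of the primary components $Q_i$ with $\sqrt{Q_i}\subseteq P$, and $J:K^\infty$ equals the intersection of the $Q_i$ with $K\not\subseteq\sqrt{Q_i}$. Both hold whether or not the decomposition is minimal.

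\textbf{Main obstacle.} The only delicate point is justifying that every associated prime of a $G$-invariant ideal is $G$-invariant. The group $G$ permutes $\Ass(R/J)$ for any $G$-stable ideal $J$, and this set is finite. Since $G=\GL_r\times\GL_n$ is connected, the action on a finite set is trivial, via the usual argument that stabilizers are closed subgroups of finite index. So each associated prime is $G$-stable, hence equal to some $D_t$.
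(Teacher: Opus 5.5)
Your proof is correct and follows the same route as the paper. You discard the non-maximal associated primes using $I^mR_Q\cap R\subseteq I^mR_P\cap R$ for $P\subseteq Q$, and then use that the associated primes of a $G$-invariant ideal lie in the chain $D_r\subset\cdots\subset D_1$. You also spell out two points the paper leaves implicit: why the associated primes of a $G$-stable ideal are $G$-stable (connectedness of $G$), and the primary-component comparison behind $I^{(m)_t}=I^m:D_{t-1}^\infty$. That comparison is fine provided you take an irredundant primary decomposition of $I^m$, so that its radicals are exactly the associated primes $D_s$.
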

\begin{proof}
In view of \eqref{eq: symbolic_power}, the left-hand side of \eqref{eq: maxass} is contained in the right. However, for each $Q\in \Ass(I)$ there exists $P\in\maxass(I)$ such that $Q\subseteq P$ and consequently we obtain containments
 \[
 I^mR_P\cap R =\{f\in R : sf\in I^m \text{ for some } s\not \in P\} \subseteq \{f\in R : sf\in I^m \text{ for some } s\not \in Q\} =I^mR_Q\cap R.
 \]
 This proves that the right hand side of \eqref{eq: maxass} is contained in the left, thus finishing the proof.

 If $I$ is $G$-invariant then so are the associated primes of $I$. Thus $\Ass(I)\subseteq \{D_t\mid 1\leq t\leq r\}$ and in particular $\maxass(I)=\{D_{t_0}\} $ and  $\Min(I)=\{D_{t_1}\}$ since the prime determinantal ideals form a chain. Comparing \Cref{def: symbolic_power} with \eqref{eq: maxass}, \eqref{eq: symbolic_power} and the definition of saturation yields the last claim.
\end{proof}

Like the usual symbolic powers, our generalized symbolic powers form a graded family of ideals which is also a filtration.

\begin{lemma}\label{lem: generalized symbolic powers graded}
If $I$ is $G$-invariant, for  $1\leq t\leq r$ the sequence of ideals $I^{(\bullet)_t}=\{I^{(m)_t}\}_{m\geq 1}$ is a graded family and a filtration.
\end{lemma}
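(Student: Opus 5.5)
We need to show two things: $I^{(a)_t}I^{(b)_t}\subseteq I^{(a+b)_t}$ for all $a,b\geq 1$ (graded family), and $I^{(m+1)_t}\subseteq I^{(m)_t}$ for all $m\geq 1$ (filtration). The plan is to work with the description $I^{(m)_t}=I^mR_{D_t}\cap R$ from \Cref{def: symbolic_power}, equivalently the colon description $\{f\in R: sf\in I^m \text{ for some } s\notin D_t\}$ used in the proof of \Cref{lem: symbolic_powers_maxass}. Note that $G$-invariance plays no real role here: the argument only uses that $D_t$ is prime.

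\emph{Graded family.} Take $f\in I^{(a)_t}$ and $g\in I^{(b)_t}$. Choose $s,s'\in R\setminus D_t$ with $sf\in I^a$ and $s'g\in I^b$. Then
\[
(ss')(fg)=(sf)(s'g)\in I^aI^b=I^{a+b}.
\]
Since $D_t$ is prime, $ss'\notin D_t$. Hence $fg\in I^{(a+b)_t}$. Because $I^{(a+b)_t}$ is an ideal and $I^{(a)_t}I^{(b)_t}$ is generated by such products $fg$, the containment $I^{(a)_t}I^{(b)_t}\subseteq I^{(a+b)_t}$ follows.

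Equivalently, one can argue by localization: $(I^aR_{D_t}\cap R)(I^bR_{D_t}\cap R)\subseteq I^{a+b}R_{D_t}\cap R$.

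\emph{Filtration.} Since $I^{m+1}\subseteq I^m$, we have $I^{m+1}R_{D_t}\subseteq I^mR_{D_t}$. Contracting to $R$ gives $I^{(m+1)_t}\subseteq I^{(m)_t}$.

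There is no real obstacle. The one step to flag is that the product of two elements outside $D_t$ stays outside $D_t$, which is exactly primality of $D_t$ (cited earlier via \cite{deconcini1980young}). The only other care needed is to state the product-of-ideals containment on generators rather than on arbitrary elements of $I^{(a)_t}I^{(b)_t}$.
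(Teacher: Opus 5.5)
Your proof is correct and essentially the paper's: the filtration step is identical, and the paper gets the graded-family property from $I^aR_{D_t}\cdot I^bR_{D_t}=I^{a+b}R_{D_t}$ followed by contraction to $R$. That is exactly the localization version you mention, and your element-wise argument using $ss'\notin D_t$ is the same computation written out with explicit denominators.
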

\begin{proof}
Since $I^i\supseteq I^j$ for $i\leq j$, we have $I^iR_{D_t}\cap R\supseteq I^j R_{D_t} \cap R$ and the filtration property follows. Since $I^iR_{D_t}\cdot I^j R_{D_t}=I^{i+j}R_{D_t}$, we have $(I^iR_{D_t}\cap R)\cdot (I^j R_{D_t} \cap R)\subseteq I^{i+j}R_{D_t}\cap R$ and the graded family property is established.
\end{proof}

\subsection{Powers of isotypic ideals and their associated primes}

The computation of arbitrary products $I_\sigma I_\tau$ of isotypic ideals  for  $\sigma, \tau\in H_r$, left open  in \cite{deconcini1980young},   was carried out by Whitehead in \cite{WhiteheadThesis} using the Littlewood--Richardson rule. To describe her work we need to introduce additional terminology.

\begin{notation}\label{def: D}
Let $\sigma$ and $\tau$ be Young diagrams.
We denote by $D(\sigma,\tau)$ the set of all partitions $\lambda$ for which there exists a skew tableau of shape $\lambda/\sigma$ with content
$
1^{\tau^\vee_1}2^{\tau^\vee_2}\cdots r^{\tau^\vee_r}
$
whose column-reading word is Yamanouchi. This set can be expressed in terms of Littlewood--Richardson coefficients by  \Cref{lemma Lambda=littlewood rich}, which states that
\[
D(\sigma,\tau)
=\{\lambda \mid c^{\lambda^\vee}_{\sigma^\vee, \tau^\vee}\neq 0\}.
\]
\end{notation}

We now recall Whitehead's formula for products of isotypic ideals.

\begin{theorem}[{\cite[Theorem 7.2]{WhiteheadThesis}}]\label{theorem: product whitehead}
Let $\sigma,\tau\in H_r$. Then
\[
I_\sigma I_\tau
=\sum_{\lambda\in D(\sigma,\tau)\cap H_r}I_\lambda.
\]
\end{theorem}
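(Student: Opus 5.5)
Since $R$ is a polynomial ring over a field of characteristic zero and the action of $G$ is reductive, the plan is to compare isotypic components. By \Cref{rem: G-invariant}(1), each $I_\sigma$ equals $\bigoplus_{\mu\supseteq\sigma}M_\mu$. The product $I_\sigma I_\tau$ is $G$-stable, so by \Cref{rem: G-invariant}(2) it has the form $I_\Lambda$ for some antichain $\Lambda$. It therefore suffices to determine exactly which irreducible components $M_\lambda$ lie in $I_\sigma I_\tau$. Equivalently, we must show that the minimal such $\lambda$ all lie in $D(\sigma,\tau)\cap H_r$, and that every $\lambda\in D(\sigma,\tau)\cap H_r$ occurs.

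\textbf{Reduction to products of irreducible components.} Because $I_\sigma=M_\sigma R$ and $I_\tau=M_\tau R$, the product is $I_\sigma I_\tau = (M_\sigma\cdot M_\tau)R$. Here $M_\sigma\cdot M_\tau$ is the image of the multiplication map $M_\sigma\otimes M_\tau\to R$, which is $G$-equivariant. As a $G$-module,
\[
M_\sigma\otimes M_\tau=\bigl(\mathbb S_{\sigma^\vee}V^*\otimes\mathbb S_{\tau^\vee}V^*\bigr)\otimes\bigl(\mathbb S_{\sigma^\vee}W\otimes\mathbb S_{\tau^\vee}W\bigr).
\]
By the Littlewood--Richardson rule, the only components of type $M_\lambda$ (diagonal type $\lambda^\vee,\lambda^\vee$) that can appear are those with $c^{\lambda^\vee}_{\sigma^\vee\tau^\vee}\neq0$. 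So the image lies in $\bigoplus_{\lambda\in D(\sigma,\tau)}M_\lambda$, using the identification of $D(\sigma,\tau)$ with nonvanishing Littlewood--Richardson coefficients. Multiplying by $R$ and applying \Cref{rem: G-invariant}(1) then gives $I_\sigma I_\tau\subseteq\sum_{\lambda\in D(\sigma,\tau)\cap H_r}I_\lambda$.

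\textbf{Reverse containment (main obstacle).} We must show that for each $\lambda\in D(\sigma,\tau)\cap H_r$ the component $M_\lambda$ actually appears in $M_\sigma M_\tau$, i.e.\ that the multiplication map does not kill it. I would argue with highest weight vectors. The highest weight vector of $M_\sigma$ is $K_\sigma$, the product of leading principal minors. More generally, highest weight vectors of the $\lambda^\vee\otimes\lambda^\vee$-isotypic part of $M_\sigma\otimes M_\tau$ correspond to Littlewood--Richardson tableaux, equivalently Yamanouchi skew tableaux of shape $\lambda/\sigma$. Given such a Yamanouchi tableau, I would write down an explicit $U$-invariant element in $M_\sigma M_\tau$, where $U$ is the product of the maximal unipotent subgroups. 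This element would be a combination of products $K_\sigma\cdot(S\mid T)$ with $(S\mid T)$ a bitableau of shape $\tau$ built from the tableau's filling. I would then show it is nonzero by computing its leading term in a suitable term order. This leading term should be the leading term of $K_\lambda$: the Yamanouchi column condition guarantees that entries fill rows to form leading principal minors of sizes $\lambda_i$, and the leading diagonal monomial of $K_\lambda$ is distinct from the monomials of all other $M_\mu$.

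This nonvanishing step is the hard part; it is the content of Whitehead's analysis. If the direct leading-term argument becomes messy, the fallback is to exploit that $R$ is a domain. The product of $U$-invariants (highest weight vectors) of weights $\sigma^\vee,\tau^\vee$ is nonzero, which gives at least the Cartan component $\sigma^\vee+\tau^\vee$. The remaining components would then be obtained via straightening and polarization operators applied to $K_\sigma K_\tau$.

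Finally, since each $M_\lambda$ with $\lambda\in D(\sigma,\tau)\cap H_r$ lies in the product, we get $I_\lambda=M_\lambda R\subseteq I_\sigma I_\tau$. Combined with the first containment, this yields the equality.
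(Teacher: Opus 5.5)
The paper does not prove this statement; it imports it from Whitehead's thesis. Your proposal therefore has to stand on its own, and it has a genuine gap in the reverse containment.

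\textbf{What works.} Your containment $\subseteq$ is fine. We have $I_\sigma I_\tau=(M_\sigma M_\tau)R$. The image of the equivariant multiplication $M_\sigma\otimes M_\tau\to R$ is a sum of some $M_\lambda$, because $R$ is multiplicity free. The Littlewood--Richardson rule then forces $c^{\lambda^\vee}_{\sigma^\vee\tau^\vee}\neq0$.

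\textbf{Why the reverse containment is the whole theorem.} $I_\sigma I_\tau$ begins in degree $|\sigma|+|\tau|$, where it equals $M_\sigma M_\tau$, and every $\lambda\in D(\sigma,\tau)$ has exactly that size. So the reverse containment is \emph{equivalent} to $M_\lambda\subseteq M_\sigma M_\tau$ for all such $\lambda$. You leave this at ``I would'' and ``should be''.

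\textbf{Why the mechanisms you name cannot work.}
\begin{enumerate}
\item Since $U$ acts by ring automorphisms fixing $K_\sigma$ and $R$ is a domain, $K_\sigma g$ is $U$-invariant if and only if $g$ is. For $g\in M_\tau$ this forces $g\in M_\tau^U=\kk K_\tau$. Hence a $U$-invariant combination $\sum_i c_iK_\sigma(S_i\mid T_i)$ is a multiple of $K_\sigma K_\tau=K_{\sigma\sqcup\tau}$, where $\sigma\sqcup\tau$ has the rows of both. It only detects the Cartan component $\lambda^\vee=\sigma^\vee+\tau^\vee$. Already for $\sigma=\tau=(1)$ and $\lambda=(2)$, the element $K_{(2)}=[1\,2\mid 1\,2]\in M_{(1)}M_{(1)}$ is not divisible by $x_{11}$.
\item Allowing arbitrary bitableaux of shape $\tau$ does not help. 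These lie in $D_\tau$ but in general not in $I_\tau$; for instance $x_{11}x_{22}\notin I_{(1,1)}$ when $r\geq2$. You then only land in $I_\sigma D_\tau$, which can be strictly larger than $I_\sigma I_\tau$.
\item The fallback fails for a similar reason. Polarization operators are the infinitesimal $\mathfrak{gl}_r\times\mathfrak{gl}_n$-action, so applied to $K_\sigma K_\tau$ they never leave the irreducible module $M_{\sigma\sqcup\tau}$. Straightening merely rewrites elements.
\item The leading-term claim is false as stated: $x_{11}x_{22}$ occurs both in $K_{(2)}$ and in $x_{11}x_{22}+x_{12}x_{21}\in M_{(1,1)}$.
\end{enumerate}

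\textbf{The missing idea.} The Littlewood--Richardson decomposition is \emph{functorial}, and this gives the nonvanishing at once.
\begin{enumerate}
\item View an $r\times n$ matrix $x$ as a linear map $W\to V$. For a partition $c$, $\eta\in\mathbb S_c(V)^*$ and $u\in\mathbb S_c(W)$, set $f_{\eta,u}(x)=\eta(\mathbb S_c(x)u)$. These matrix coefficients span $M_\lambda$ when $c=\lambda^\vee$ and $\lambda\in H_r$.
\item With $a=\sigma^\vee$ and $b=\tau^\vee$, products of coefficients for $a$ and $b$ are coefficients of the functor $\mathbb S_a\otimes\mathbb S_b$.
\item In characteristic zero, Schur--Weyl duality applies: induce the Specht module $\mathrm{Sp}_a\boxtimes\mathrm{Sp}_b$ from $S_{|\sigma|}\times S_{|\tau|}$ to $S_{|\sigma|+|\tau|}$. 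Whenever $c^{c}_{ab}\neq0$, this provides natural transformations $\iota\colon\mathbb S_c\to\mathbb S_a\otimes\mathbb S_b$ and $\pi\colon\mathbb S_a\otimes\mathbb S_b\to\mathbb S_c$ with $\pi\iota=\mathrm{id}$. They are natural for \emph{all} linear maps, not just invertible ones.
\item Consequently $\pi_V\circ(\mathbb S_a(x)\otimes\mathbb S_b(x))\circ\iota_W=\mathbb S_c(x)$ for every $x$. So $f_{\eta,u}$ is the coefficient of $\mathbb S_a\otimes\mathbb S_b$ at $(\eta\circ\pi_V,\iota_W(u))$. This is a finite sum of products of elements of $M_\sigma$ and $M_\tau$.
\end{enumerate}
Since $\mathbb S_c(V)\neq0$ for $\lambda\in H_r$, this gives $M_\lambda\subseteq M_\sigma M_\tau$ and hence $I_\lambda\subseteq I_\sigma I_\tau$. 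Together with your first containment, this completes the proof.
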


In the remainder of this section we aim to give a closed formula for the symbolic powers of isotypic ideals in the spirit of \Cref{theorem: product whitehead}. To do so it is necessary to state a few rules concerning the arithmetic of these ideals. These generalize \cite[Lemmas~7.4 and 7.5]{WhiteheadThesis}.

\begin{lemma}\label{lemma: sum and intersection}
\begin{enumerate}
\item
Let $\rho$, $\lambda_1,\dots,\lambda_u$, and $\mu_1,\dots,\mu_v$ be partitions in $H_r$. Then
\[
I_\rho+(I_{\lambda_1}+\cdots+I_{\lambda_u})\cap(I_{\mu_1}+\cdots+I_{\mu_v})
=
(I_\rho+I_{\lambda_1}+\cdots+I_{\lambda_u})\cap(I_\rho+I_{\mu_1}+\cdots+I_{\mu_v}).
\]
\item
Let $\rho$, $\lambda_1,\dots,\lambda_v$, and $\mu_1,\dots,\mu_v$ be partitions in $H_r$. Then
\[
(I_{\lambda_1}\cap I_{\mu_1})+\cdots+(I_{\lambda_v}\cap I_{\mu_v})
=(I_{\lambda_1}+\cdots+I_{\lambda_v})\cap J_1\cap\cdots\cap J_v,
\]
for some $\GL$-invariant ideals $J_i$ such that
 $I_{\mu_i}\subseteq J_i$ for each $1\leq i\leq v$.
\end{enumerate}
\end{lemma}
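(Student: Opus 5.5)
The key tool will be Remark~\ref{rem: G-invariant}(1): every isotypic ideal decomposes as $I_\sigma=\bigoplus_{\tau\supseteq\sigma}M_\tau$. Consequently every $G$-invariant ideal $I$ is the direct sum of those $M_\tau$ it contains, and it is determined by its support set
\[
S(I)=\{\tau\in H_r : M_\tau\subseteq I\},
\]
which is an up-set (order filter) of $H_r$ under containment. Since the $M_\tau$ are independent summands of $R$, sums and intersections of $G$-invariant ideals are computed on supports:
\[
S(I+J)=S(I)\cup S(J),\qquad S(I\cap J)=S(I)\cap S(J).
\]
For the intersection, an element of $I\cap J$ decomposes into isotypic pieces, each lying in both $I$ and $J$, because $G$-submodules of $R$ are sums of their isotypic components. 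Thus the lattice of $G$-invariant ideals under $(+,\cap)$ is isomorphic to the lattice of up-sets of $H_r$ under $(\cup,\cap)$. In particular it is a distributive lattice, and $S(I_\sigma)=\{\tau\supseteq\sigma\}$.

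For part (1), I will translate both sides into supports. Put $A=S(I_\rho)$, $B=\bigcup_i S(I_{\lambda_i})$ and $C=\bigcup_j S(I_{\mu_j})$. The claim then becomes $A\cup(B\cap C)=(A\cup B)\cap(A\cup C)$, which is the distributive law for sets. I also need the left-hand side to be a $G$-invariant ideal so that it is determined by its support; this holds because sums and intersections of $G$-invariant ideals are $G$-invariant. Both sides are therefore $G$-invariant ideals with equal supports, hence equal.

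For part (2), I will set $J_i=\sum_{k\neq i}I_{\lambda_k}+I_{\mu_i}$, which is $G$-invariant and contains $I_{\mu_i}$. Write $L_k=S(I_{\lambda_k})$ and $U_k=S(I_{\mu_k})$. The right-hand side then has support
\[
\Bigl(\bigcup_k L_k\Bigr)\cap\bigcap_i\Bigl(\bigcup_{k\neq i}L_k\cup U_i\Bigr).
\]
Take $\tau$ in this set. It lies in some $L_{k_0}$, and the $i=k_0$ factor forces $\tau\in\bigcup_{k\neq k_0}L_k\cup U_{k_0}$. If $\tau\in U_{k_0}$ we are done, since then $\tau\in L_{k_0}\cap U_{k_0}$. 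Otherwise $\tau$ lies in some $L_{k_1}$ with $k_1\neq k_0$, and I iterate.

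This iteration could cycle, and that is the only real obstacle in the argument. To handle it, I will instead fix $\tau$ and let $K=\{k:\tau\in L_k\}\neq\emptyset$. Suppose $\tau\notin U_k$ for every $k\in K$. If $|K|=1$, say $K=\{k_0\}$, then the $i=k_0$ factor fails, contradiction. If $|K|\ge 2$, every factor is satisfied through some other $L_k$ with $k\in K$, so the argument fails.

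So this choice of $J_i$ is wrong, and I will take the $J_i$ smaller. The fallback is to prove the general identity $\sum_k(A_k\cap B_k)=\bigcap_{f}\bigl(\bigcup_k C_{k,f(k)}\bigr)$ by full distributive expansion, where $f$ ranges over choice functions picking, for each $k$, one of $A_k$ or $B_k$. In this expansion the term with all $A$'s is $\sum_k I_{\lambda_k}$. Every other term contains some $I_{\mu_i}$. I will group these remaining terms, for each index $i$, into those whose first choice of $B$ occurs at index $i$. Defining $J_i$ as the intersection of the terms in group $i$, each $J_i$ is $G$-invariant, and each of its terms contains $I_{\mu_i}$, so $J_i\supseteq I_{\mu_i}$. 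If some group $i$ is empty, set $J_i=R$. The distributive law then yields the stated equality.
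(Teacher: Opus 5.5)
Your argument is correct, but it reaches the result by a different route than the paper.

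\textbf{Your route.} You first prove a global structural fact. Since $R=\bigoplus_{\tau\in H_r}M_\tau$ is multiplicity-free, every $G$-invariant ideal is $\bigoplus_{\tau\in S(I)}M_\tau$ for an up-set $S(I)$. The map $S$ converts $+$ and $\cap$ into $\cup$ and $\cap$. So the $G$-invariant ideals form a distributive lattice, and both parts become formal lattice identities.

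\textbf{The paper's route.} The paper never isolates distributivity. For part~(1) it checks each summand $I_\sigma$ of the right-hand side against Whitehead's Lemma~7.4: $I_\sigma\subseteq I_\lambda+I_\nu$ forces $I_\sigma\subseteq I_\lambda$ or $I_\sigma\subseteq I_\nu$. In your language this follows at once from $S(I_\sigma)$ being the principal up-set generated by $\sigma$. For part~(2) the paper runs an induction on $v$, building the $J_i$ recursively from the case $v=2$ together with part~(1).

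\textbf{Comparison.} Your conjunctive-normal-form expansion replaces that induction in one step. Grouping terms by the first index at which a $\mu$ is chosen in fact reproduces the paper's ideals. For $v=2$ you get $J_1=(I_{\mu_1}+I_{\lambda_2})\cap(I_{\mu_1}+I_{\mu_2})=I_{\mu_1}+I_{\lambda_2}\cap I_{\mu_2}$ and $J_2=I_{\lambda_1}+I_{\mu_2}$. In general you get $J_1=I_{\mu_1}+\sum_{k\geq 2}(I_{\lambda_k}\cap I_{\mu_k})$, which matches the paper's recursive definition. Your approach is uniform and explains why any identity of this kind must hold, avoiding the element-wise bookkeeping of the paper's inductive step. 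The paper's approach stays at the level of the generators $I_\sigma$ and quoted lemmas, and never needs to discuss $G$-submodules or isotypic projections.

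\textbf{Small points for a write-up.}
First, the claim that every $G$-invariant ideal is a direct sum of some $M_\tau$ needs \Cref{rem: G-invariant}~(2), or complete reducibility plus multiplicity-freeness in characteristic zero. Part~(1) of that remark alone is not enough, since it only describes a single $I_\sigma$.
Second, you should drop the abandoned first choice of $J_i$; your diagnosis of why it fails is right.
Third, the fallback $J_i=R$ is unnecessary. Group $i$ is never empty, because it always contains the choice function that picks $\mu$ only at index $i$.
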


\begin{proof}
(1) The containment $\subseteq$ is immediate.
For the reverse containment, note that both sides are $G$-invariant ideals and hence decompose as sums of $I_\sigma$.
Using \cite[Lemma~7.4]{WhiteheadThesis}, which states that for arbitrary Young diagrams $\rho, \lambda, \nu$ the containment $I_\rho\subseteq I_\l+I_\nu$ implies $I_\rho\subseteq I_\l$ or $I_\rho\subseteq I_\nu$, containment in both summands on the right hand side of (1) forces containment in the left-hand side.

(2) The proof is by induction on $v$.
The case $v=2$ follows from repeated application of \cite[Lemma~7.5]{WhiteheadThesis}, which states that $I_\rho+I_\l\cap I_\nu=(I_\rho+I_\l)\cap (I_\rho+I_\nu)$.
We  first show
		\begin{equation}\label{eq: v=2}
			(I_{\l_1}\cap I_{\mu_1})+ (I_{\l_2}\cap I_{\mu_2})=(I_{\l_1}+I_{\l_2}\cap I_{\mu_2})\cap  (I_{\mu_1}+ I_{\l_2}\cap I_{\mu_2}).
		\end{equation}
		The left hand side is clearly contained in the right. Since the right hand side is a $\GL$-invariant ideal it can be written for some set $\Sigma$ of partitions as
		\[
		(I_{\l_1}+I_{\l_2}\cap I_{\mu_2})\cap  (I_{\mu_1}+ I_{\l_2}\cap I_{\mu_2})=\sum_{\sigma\in \Sigma} I_\sigma.
		\]
		It follows that for each $\s\in \Sigma$ we have $I_\s\subseteq I_{\l_1}+I_{\l_2}\cap I_{\mu_2}$ and $I_\s\subseteq I_{\mu_1}+ I_{\l_2}\cap I_{\mu_2}$. Utilizing the same argument as in \cite[Lemma  7.4]{WhiteheadThesis}, we obtain from the first containment that $I_\s\subseteq I_{\l_1}$ or $I_\s\subseteq I_{\l_2}\cap I_{\mu_2}$ and from the second containment that $I_\s\subseteq I_{\mu_1}$ or $I_\s\subseteq I_{\l_2}\cap I_{\mu_2}$. Overall, $I_\s\subseteq I_{\l_1}\cap I_{\mu_1} $ or $I_\s\subseteq I_{\l_2}\cap I_{\mu_2}$, which shows the reverse containment proving  \eqref{eq: v=2}.

		Denoting $J_1=I_{\mu_1}+ I_{\l_2}\cap I_{\mu_2}$ so that $I_{\mu_1}\subseteq J_1$ and  $J_2=I_{\l_1}+I_{\mu_2}$ so that $I_{\mu_2}\subseteq J_2$ we now rewrite \eqref{eq: v=2} as
		\[
		(I_{\l_1}\cap I_{\mu_1})+ (I_{\l_2}\cap I_{\mu_2})=(I_{\l_1}+I_{\l_2})\cap (I_{\l_1}+I_{\mu_2})\cap J_1.
		\]

		The case $v>2$ follows a similar pattern. By the inductive hypothesis we have
		\[
		(I_{\l_1}\cap I_{\mu_1})+ (I_{\l_2}\cap I_{\mu_2})+ \cdots +(I_{\l_v}\cap I_{\mu_v}) =(I_{\l_1}\cap I_{\mu_1})+(I_{\l_2}+\cdots+I_{\l_v})\cap J'_2\cap \cdots \cap J'_{v}.
		\]
		We claim
		\begin{equation}\label{eq: general v}
			(I_{\l_1}\cap I_{\mu_1})+(I_{\l_2}+\cdots+I_{\l_v})\cap J'_2\cap \cdots \cap J'_{v}\\
			=\left( I_{\l_1}+ (I_{\l_2}+\cdots+I_{\l_v})\cap J'_2\cap \cdots \cap J'_{v}\right )\cap J_1.
		\end{equation}
		where $J_1= I_{\mu_1}+ (I_{\l_2}+\cdots+I_{\l_v})\cap J'_2\cap \cdots \cap J'_{v}$.
		Moreover notice that by part (1) we can write
		\[
		I_{\l_1}+ (I_{\l_2}+\cdots+I_{\l_v})\cap J'_2\cap \cdots \cap J'_{v}=(I_{\l_1}+ I_{\l_2}+\cdots+I_{\l_v})\cap (I_{\l_1}+J'_2)\cap \cdots \cap (I_{\l_1}+J'_v).
		\]
		Setting $J_i=I_{\l_1}+J'_i$ for $2\leq i\leq v$ and utilizing the inductive hypothesis $I_{\mu_i}\subset J'_i$ yields $I_{\mu_i}\subset J_i$. Thus an equivalent formulation of  \eqref{eq: general v} is
		\begin{equation}\label{eq: general v pt 2}
			(I_{\l_1}\cap I_{\mu_1})+(I_{\l_2}+\cdots+I_{\l_v})\cap J'_2\cap \cdots \cap J'_{v}\\
			=(I_{\l_1}+ I_{\l_2}+\cdots+I_{\l_v})\cap (I_{\l_1}+J'_2)\cap \cdots \cap (I_{\l_1}+J'_v)\cap J_1.
		\end{equation}
		The left hand side of \eqref{eq: general v pt 2} is clearly contained in the right. The right hand side is a $\GL$-invariant ideal which we write as $\sum_{\sigma\in \Sigma} I_\sigma$ for some set $\Sigma$ of diagrams. Then for every $\s\in \Sigma$ it follows by  \cite[Lemma  7.4]{WhiteheadThesis}  that we have $I_\s\subset I_{\l_1}$ or ($I_\s\subset I_{\l_2}+\cdots +I_{\l_v}$ and $I_\s\subset J'_2, \ldots ,I_\s\subset J'_v$) and $I_\s\subset J_1$. Overall this means that $I_\s$ is contained in the left hand side of \eqref{eq: general v pt 2}, completing the proof.
		\end{proof}

Next we discuss the associated primes and primary decompositions of $G$-invariant ideals.

In view of \Cref{def: symbolic_power}, for a fixed integer $p$ it will be useful to separate the primary components with radical $D_t$ with  $t< p$ from those with radical  $D_t$ with  $t\geq p$. We now discuss the combinatorial principles behind such a decomposition.

\begin{notation}\label{def: <v}
			For a Young diagram $\l$ and an integer $p>0$  we set
					$\lambda_{\geqslant p}$ to be the diagram formed by those rows of $\lambda$ of length at least $p$	, equivalently, $\lambda_{\geqslant p}$ is obtained from $\l$ by removing all rows of length $<p$.  Set $\lambda_{<p}$ to be the partition obtained by truncating the rows of $\l$ to have size at most the size of the largest part of $\l$ that is less than $p$. We use the convention that the maximum of the empty set is $0$; thus, if $\lambda$ has no part smaller than $p$, then $\lambda_{<p}=\varnothing$. In detail,
					\[
					(\l_{\geq p})_i= \l_i \text{ if } \l_i\geq p, \ 0 \text{ otherwise},  \quad \text{and} \quad
					(\l_{<p})_i=\min\{\l_i, \max\{\l_j: \l_j<p\}\},
					\qquad \max\varnothing:=0.
					\]
	\end{notation}

	We now point out how the ideals $I_\l$, $I_{\l_{<p}}$ and $I_{\l_{\geqslant p}}$ relate.
\begin{lemma}\label{lem: <v}
For any partition $\lambda$ and positive integer $p$ there is a decomposition
$
I_\lambda=I_{\lambda_{\ge p}}\cap I_{\lambda_{<p}}.
$
Moreover $\Ass (I_{\lambda_{\ge p}})=\{D_{\l_i}: \l_i\geq p \}$ and $\Ass (I_{\lambda_{< p}})=\{D_{\l_i}: \l_i < p\}$.
\end{lemma}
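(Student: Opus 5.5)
We use the De Concini--Eisenbud--Procesi description $I_\mu=\bigoplus_{\tau\supseteq\mu}M_\tau$ from \Cref{rem: G-invariant}(1). With it, the equality $I_\lambda=I_{\lambda_{\ge p}}\cap I_{\lambda_{<p}}$ becomes a statement about diagram containment: a diagram $\tau$ contains $\lambda$ if and only if it contains both $\lambda_{\ge p}$ and $\lambda_{<p}$. Intersections of $G$-stable subspaces that are direct sums of isotypic components are computed componentwise, so $I_{\lambda_{\ge p}}\cap I_{\lambda_{<p}}=\bigoplus M_\tau$, the sum taken over all $\tau\supseteq\lambda_{\ge p}$ with $\tau\supseteq\lambda_{<p}$.

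For the first claim, both $\lambda_{\ge p}\subseteq\lambda$ and $\lambda_{<p}\subseteq\lambda$ hold by construction, which gives the inclusion $\subseteq$. For the reverse, suppose $\tau\supseteq\lambda_{\ge p}$ and $\tau\supseteq\lambda_{<p}$, and fix an index $i$.
\begin{itemize}
\item If $\lambda_i\ge p$, then $(\lambda_{\ge p})_i=\lambda_i$, so $\tau_i\ge\lambda_i$.
\item If $\lambda_i<p$, then $\lambda_i\le\max\{\lambda_j:\lambda_j<p\}$, so $(\lambda_{<p})_i=\lambda_i$ and again $\tau_i\ge\lambda_i$.
\end{itemize}
Hence $\tau_i=\max\{(\lambda_{\ge p})_i,(\lambda_{<p})_i\}$ bounds $\lambda_i$ for every $i$, so $\tau\supseteq\lambda$.

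For the associated primes, we would invoke the primary decomposition of De Concini--Eisenbud--Procesi \cite{deconcini1980young}: for any diagram $\mu$, $\Ass(I_\mu)=\{D_{\mu_i}\}$, the determinantal primes indexed by the distinct row lengths of $\mu$. One can also read this off from the explicit primary decomposition $I_\mu=\bigcap_j I_{\mu^{[j]}}$, where $\mu^{[j]}$ is $\mu$ truncated to width at most $\mu_j$. Each $I_{\mu^{[j]}}$ is $D_{\mu_j}$-primary, and none of these components is redundant because each distinct row length is a corner of $\mu$. This is the main point requiring care: we must cite the exact result (their Theorem 7.1/7.2) and check that its corner-wise indexing matches our conventions.

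Granting it, the row lengths of $\lambda_{\ge p}$ are exactly the $\lambda_i$ with $\lambda_i\ge p$. For $\lambda_{<p}$, let $M=\max\{\lambda_j:\lambda_j<p\}$. Its row lengths are $M$ (from the rows with $\lambda_i\ge M$, which are truncated to $M$) together with the $\lambda_i<M$, which are unchanged. So the set of distinct row lengths of $\lambda_{<p}$ is exactly $\{\lambda_i:\lambda_i<p\}$. If there is no such row, then $\lambda_{<p}=\varnothing$, $I_\varnothing=R$, and the set of associated primes is empty, consistent with the convention. This gives both formulas.
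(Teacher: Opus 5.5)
Your proof of $I_\lambda=I_{\lambda_{\ge p}}\cap I_{\lambda_{<p}}$ is correct but takes a different route from the paper. The paper quotes the rectangular primary decomposition $I_\lambda=\bigcap_i I_{\lambda^{(i)}}$ of \cite[Cor.~5.4]{deconcini1980young}. It then regroups the rectangles of width $\ge p$ into $I_{\lambda_{\ge p}}$ and the remaining ones into $I_{\lambda_{<p}}$. You use only $I_\mu=\bigoplus_{\tau\supseteq\mu}M_\tau$. This gives $I_\alpha\cap I_\beta=I_{\alpha\cup\beta}$ for any two diagrams (union being the entrywise maximum), and you then check that $\lambda=\lambda_{\ge p}\cup\lambda_{<p}$. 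Your route is more elementary and self-contained. Applied to $\mu$ written as the union of its maximal rectangles, the same observation even re-derives Cor.~5.4. One small slip: you mean $\tau_i\ge\max\{(\lambda_{\ge p})_i,(\lambda_{<p})_i\}=\lambda_i$, not $\tau_i=\max\{\ldots\}$.

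For the associated primes, the fact you want, $\Ass(I_\mu)=\{D_{\mu_i}\}$, is correct and is exactly what the paper uses. Your explicit justification of it, however, is wrong as written. Truncating $\mu$ to width at most $\mu_j$ keeps the shorter rows, so $\mu^{[1]}=\mu$. Your claim would then make every $I_\mu$ $D_{\mu_1}$-primary. For instance (with $r\ge 2$), $I_{(2,1)}=I_{(2)}\cap I_{(1,1)}$ has both $D_2$ and $D_1$ as associated primes.

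The components must be the rectangles $\mu^{[j]}=(\mu_j^{\,k_j})$ with $k_j=\mu^\vee_{\mu_j}$: discard the rows shorter than $\mu_j$ and truncate the rest to width $\mu_j$. These are $D_{\mu_j}$-primary by \cite[Cor.~5.3]{deconcini1980young}, which is the one non-elementary input and is needed in either approach. Since $\mu$ is their union, your intersection argument gives $I_\mu=\bigcap_j I_{\mu^{[j]}}$. The corner box $(k_j,\mu_j)$ lies in no other rectangle, which gives irredundancy.

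Also, do not cite Theorems~7.1/7.2 of \cite{deconcini1980young} here. Those concern the products $D_\sigma=\overline{I_\sigma}$, whose associated primes can be strictly larger. By the formula the paper quotes in the proof of \Cref{thm: Ds}, $D_1\in\Ass(D_2^2)$ when $r\ge 3$, whereas $I_{(2,2)}$ is $D_2$-primary. With these corrections your argument is complete.
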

\begin{proof}
By \cite[Cor.~5.4]{deconcini1980young},
\begin{equation}\label{eq: ass Is}
I_\lambda=\bigcap_i I_{\lambda^{(i)}},
\end{equation}
where $\lambda^{(i)}$ is the largest rectangular partition with parts equal to $\lambda_i$ contained in $\lambda$.
Applying the same result, the intersection of ideals corresponding to those rectangles with number of columns at least $p$ gives $I_{\lambda_{\ge p}}$, and intersecting the remaining ones gives $I_{\lambda_{<p}}$. Moreover, by \cite[Cor.~5.3]{deconcini1980young}, each of the ideals  $I_{\lambda^{(i)}}$ is $D_{\l_i}$-primary. This establishes the statement regarding associated primes.
\end{proof}

The following lemma gives the minimal and maximal associated primes of arbitrary $G$-invariant ideals. Recall from \Cref{rem: G-invariant}~(2) that these have the form $I_\Sigma$ with $\Sigma\subseteq H_r$. While the resulting decomposition may not be irredundant it suffices to establish the maximal associated prime of the ideal, an ingredient we use in our computation of symbolic powers.

\begin{lemma}\label{lemma: symbolic power for sum of ideal invariant}
		If $\Sigma \subset H_r$ is an antichain and $I_\Sigma=\sum _{\sigma \in \Sigma}I_\sigma$ then $\Min(I_\Sigma)=\{D_q\}$ and
$ \maxass(I_\Sigma) = \{D_p\}$,  where  $q=\min\{\s_1: \s\in \Sigma\}$   is the  length of  the shortest first row in any $\sigma \in \Sigma$ and
$p=\min\{\s_i: \s\in \Sigma\}$  is the  length of  the shortest (nonzero) row in any $\sigma \in \Sigma$.
	\end{lemma}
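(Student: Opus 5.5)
The plan is to locate the associated primes of $I_\Sigma$ through radicals and localization, using that every $G$-invariant prime is some $D_t$ and that these primes form a chain. Since $\Ass(I_\Sigma)\subseteq\{D_1,\ldots,D_r\}$, the minimal prime is the largest $D_t$ that still contains $I_\Sigma$, and the maximal associated prime is the smallest $D_t$ (that is, the largest index $t$) appearing in $\Ass(I_\Sigma)$. So two things must be shown: first, $\sqrt{I_\Sigma}=D_q$; second, $D_p\in\Ass(I_\Sigma)$ while no $D_t$ with $t<p$ is associated.

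\textbf{Minimal prime.} By \Cref{lem: <v} with $p=1$ (or \cite[Cor.~5.4]{deconcini1980young}), $\sqrt{I_\sigma}=\bigcap_i D_{\sigma_i}=D_{\sigma_1}$, since the $D_t$ form a decreasing chain. Hence
\[
\sqrt{I_\Sigma}=\sqrt{\textstyle\sum_\sigma \sqrt{I_\sigma}}=\sqrt{\textstyle\sum_\sigma D_{\sigma_1}}=D_q .
\]
Therefore $\Min(I_\Sigma)=\{D_q\}$.

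\textbf{No associated prime below $p$.} For $t<p$ we localize at $D_t$. Every row of every $\sigma\in\Sigma$ has length at least $p>t$, so $I_\sigma\subseteq D_{\sigma_i}$ for every row $i$. Consequently $(I_\Sigma)_{D_t}\subseteq (D_p)_{D_t}$, and since $D_p\not\subseteq D_t$ this localization is the unit ideal. So no $D_t$ with $t<p$ contains $I_\Sigma$, and in particular none is associated.

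\textbf{$D_p$ is associated.} This is the main obstacle, because the sum of primary decompositions is not a primary decomposition. Choose $\sigma\in\Sigma$ having a row of length $p$, and apply \Cref{lem: <v} with threshold $p+1$: $I_\sigma=I_{\sigma_{\geq p+1}}\cap I_{\sigma_{<p+1}}$, where the second factor is $D_p$-primary. The intended witness for $D_p\in\Ass$ is an element $f\in I_{\sigma_{\geq p+1}}$ with $f\notin I_\Sigma$ but $D_p^N f\subseteq I_\Sigma$.

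To produce it, the plan is to work inside $R_{D_p}$ after first checking that $I_\Sigma R_{D_p}\neq I_\Sigma:D_{p}^\infty$ localized appropriately. Concretely, I would compare $I_\Sigma$ with $J=\sum_{\tau\in\Sigma}I_{\tau_{\geq p+1}}$. By the row-length argument used above (rows of length $\ge p+1$ all lie in $D_{p+1}$), the ideal $J$ has no component at $D_p$, and $I_\Sigma D_p^N\subseteq\ldots$ is not quite what is wanted. The better plan is to use the saturation description of \Cref{lem: symbolic_powers_maxass}: show $I_\Sigma:D_p^\infty\supseteq J$, then show $J\not\subseteq I_\Sigma$.

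For the containment, take $\tau\in\Sigma$. The $\tau_{\geq p+1}$ part survives saturation, and the short rows, of length exactly $p$ at minimum, are absorbed by powers of $D_p$, via $D_p^N\subseteq \overline{I_{\text{rectangle}}}$-type containments from the primary decomposition of \cite[Cor.~5.3]{deconcini1980young}.

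For non-containment, the containments are $G$-invariant, so it suffices to find one $\rho$ with $I_\rho\subseteq J$ but $\rho\not\supseteq\tau$ for all $\tau\in\Sigma$. The candidate is $\rho=\sigma_{\geq p+1}$ itself. Because $\Sigma$ is an antichain, $\sigma_{\geq p+1}\subsetneq\sigma$ cannot contain $\sigma$. Checking that it contains no other $\tau\in\Sigma$ is where care is needed: if $\sigma_{\geq p+1}\supseteq\tau$, then $\sigma\supseteq\tau$, contradicting the antichain condition unless $\tau=\sigma$. This gives $D_p\in\Ass(I_\Sigma)$, and with the previous step, $\maxass(I_\Sigma)=\{D_p\}$.
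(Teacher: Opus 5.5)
Your computation of $\Min(I_\Sigma)$ is fine. The step ``No associated prime below $p$'' is wrong, however, because the containment is reversed. The primes form the chain $D_r\subset D_{r-1}\subset\cdots\subset D_1$, so for $t<p$ one has $D_p\subseteq D_t$, not $D_p\not\subseteq D_t$. In fact $I_\Sigma\subseteq D_q\subseteq D_p\subseteq D_t$ for every $t\le q$, so $(I_\Sigma)_{D_t}$ is a proper ideal and these $D_t$ do contain $I_\Sigma$. Your first paragraph has the same slip: $\maxass(I_\Sigma)$ is the \emph{largest} associated $D_t$ under containment, i.e.\ the one of smallest index. That is why $t<p$ is the right range to exclude.

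What must be shown is that these larger primes are not \emph{embedded} associated primes of $I_\Sigma$; for instance, $D_1$ when $p\ge 2$. This is the real content of the lemma. Each summand satisfies $\Ass(I_\tau)=\{D_{\tau_i}\}\subseteq\{D_t:t\ge p\}$ by \cite[Cor.~5.4]{deconcini1980young}, but a sum of ideals can acquire new embedded primes, and nothing in your proposal rules this out.

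This gap also breaks your last step. Your two claims there are essentially correct: $J\subseteq I_\Sigma:D_p^\infty$ (using $D_p^N\subseteq I_{\tau_{<p+1}}$, which holds because that rectangle ideal is $D_p$-primary, not merely via its integral closure), and $J\not\subseteq I_\Sigma$ by the antichain condition. But together they only give $I_\Sigma:D_p^\infty\neq I_\Sigma$. That says \emph{some} associated prime contains $D_p$, i.e.\ some $D_t$ with $t\le p$ is associated, not $D_p$ itself.

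Once you know $\Ass(I_\Sigma)\subseteq\{D_t:t\ge p\}$, your argument does finish. If $D_p$ were not associated, no associated prime would contain $D_p$, and the saturation would equal $I_\Sigma$. So the missing ingredient is exactly that containment.

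The paper proves it by induction on the total number of rows. It takes $\tau\in\Sigma$ with a row of length $p$ and sets $\rho=(p^s)$. It then writes $I_\Sigma=Q\cap J'$ as in \cite[p.~153]{deconcini1980young}, where:
\begin{itemize}
\item $Q=I_\rho+\sum_{\sigma\in\Sigma'}I_\sigma$ is $D_p$-primary by \cite[Cor.~5.3]{deconcini1980young};
\item $J'=I_{\tau'}+\sum_{\sigma\in\Sigma'}I_\sigma$ has fewer rows, so by induction its associated primes are all $D_t$ with $t\ge p$.
\end{itemize}

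A repair closer to your approach is also available. By \Cref{rem: G-invariant}, $G$-invariant ideals correspond to up-sets in $H_r$, with sums and intersections matching unions and intersections, so their lattice is distributive. Hence $I_\Sigma=\sum_\tau\bigcap_i I_{\tau^{(i)}}$, with $\tau^{(i)}$ the rectangles of \cite[Cor.~5.4]{deconcini1980young}, is an intersection of sums of rectangle ideals, one rectangle from each $\tau$. In each such sum the shortest row length $p'$ satisfies $p'\ge p$, so the sum is $D_{p'}$-primary by \cite[Cor.~5.3]{deconcini1980young}.
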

	\begin{proof}
	From \eqref{eq: ass Is} we see that for any diagram $\s$, $\Ass(I_\s)=\{D_{\s_i}\}$ and in particular $\Min(I_\s)=\{D_{\s_1}\}$. Thus with the notation introduced above we have $I_\s\subseteq D_q$ for all $\s\in \Sigma$ and thus $I_\Sigma\subseteq D_q$. Moreover by the definition of $q$, if $q=\s_1$ then $I_\s\not \subseteq D_{q+1}$ and thus $I_\Sigma\not \subseteq D_{q+1}$. It follows that the minimal associated prime of $I_\Sigma$ is $D_q$.

	To determine the maximal associated prime, the proof is by induction on  the total number of rows of shapes in $\Sigma$.
		Let $\tau \in \Sigma$ be an element  with a row of length $p$ having minimal number $s$ of rows among such elements. Write $\tau=\tau^{\prime}\sqcup (p)$ where $(p)$ represents the diagram with one row of length $p$ and $\tau^{\prime}$ consists of the remaining rows of $\tau$. Let $\rho=(p^s)$ be the rectangular diagram with $s$ rows of  length $p$, and let $\Sigma^\prime =\Sigma \setminus \{\tau\}$. If $\tau=\rho$ then $I_\Sigma$ is $D_p$-primary by \cite[Corollary 5.3]{deconcini1980young} as $\rho$ is rectangular with rows of length $p$ and no diagram in $\Sigma$ has any row of length $<p$. Moreover we have $q=p$ as evidenced by $\rho\in \Sigma$. This yields the desired conclusion.

		From now on we assume $\tau\neq \rho$.
		Then by \cite[p.~153 last paragraph]{deconcini1980young} there is a decomposition
		$$I_\Sigma= \left( I_\rho+\sum_{\sigma \in \Sigma^\prime} I_\sigma \right) \cap \left(I_{\tau^\prime} +\sum_{\sigma\in \Sigma^\prime}I_\sigma  \right).$$
		Set  $Q:= I_\rho+\sum_{\sigma \in \Sigma^\prime} I_\sigma $ and $J := I_{\tau^\prime} +\sum_{\sigma\in \Sigma^\prime}I_\sigma $.
		We show that $J$ is not contained in $Q$, thus $Q$ is irredundant in the decomposition above. Indeed $I_{\tau'}\not\subseteq I_\rho$ as $\rho\not \subseteq \tau'$ and  $I_{\tau'}\not\subseteq \sum_{\sigma \in \Sigma^\prime} I_\sigma$ since if $\sigma\subseteq \tau'$ for some $\sigma\in \Sigma'$ then $\sigma\subseteq \tau$ contradicting that $\Sigma$ is an antichain. Whitehead's lemma \cite[Lemma  7.4]{WhiteheadThesis} then shows  that $J\not\subseteq Q$.
	A similar antichain argument shows that $I_\rho$ is an irredundant summand in $Q$. From \cite[Corollary 5.3]{deconcini1980young} it follows that the ideal $Q$ is  $D_p$--primary.  Meanwhile any primary component of $J := I_{\tau^\prime} +\sum_{\sigma\in \Sigma^\prime}I_\sigma $ will be $D_t$--primary for some  $t\geqslant p$ by the inductive hypothesis. This leads to the desired conclusion that $D_p$ is the maximal associated prime of $I_\Sigma$.	\end{proof}

\subsection{Symbolic powers of $G$-invariant ideals}

 In this section we give a closed formula for the symbolic powers of a $G$-invariant ideal. It  relies on the notion of Yamanouchi filtrations introduced in \Cref{def: Yamanouchi filtration} and  the truncation operation  $\l_{\geq p}$ from \Cref{def: <v}, which does not seem to have appeared previously in the literature.

\begin{definition}\label{def: Yamanouchi filtration}
For a set $\Sigma\subset H_r$, write $\Sigma^\vee:=\{\sigma^\vee:\sigma\in\Sigma\}$. We say that $\l\in H_r$ {\em admits a Yamanouchi $m$-filtration} with content $\Sigma^\vee$ whenever there is a chain
\[
\emptyset=\lambda^{(0)}\subseteq\lambda^{(1)}\subseteq\cdots\subseteq\lambda^{(m)}=\lambda
\]
such that, for each $0\leq i\leq m-1$, the skew shape $\lambda^{(i+1)}/\lambda^{(i)}$ admits a Yamanouchi filling with content ${\sigma^{(i+1)}}^\vee$ for some $\sigma^{(i+1)}\in\Sigma$.

We denote the set of diagrams that admit a Yamanouchi $m$-filtration with content $\Sigma^\vee$ by $\Lambda_m(\Sigma)$. When $\Sigma=\{\s\}$ is a singleton we denote this set by $\Lambda_m(\sigma)$.
\end{definition}

\begin{example}\label{ex: Yamanouchi filtration}
Let $\s=(3,1)\in H_5$. We show below that $\lambda = (5,4,2,1)\in \Lambda_3(\s)$.
\begin{figure}[h]
\centering
\[
\begin{ytableau}
*(gray!30) 1 & *(gray!30) 2 & *(gray!30) 3 \\
*(gray!30) 1
\end{ytableau}
\subseteq
\begin{ytableau}
*(gray!30) 1 & *(gray!30) 2 & *(gray!30) 3  \\
*(gray!30) 1 & 1 &3 \\
1 &2
\end{ytableau}
\subseteq
\begin{ytableau}
*(gray!30) 1 & *(gray!30) 2 & *(gray!30) 3  & *(blue!30) 2 & *(blue!30) 3 \\
*(gray!30) 1 & 1  &3 &*(blue!30) 1 \\
1 &2 \\
*(blue!30)  1
\end{ytableau}
\]
\caption{A Yamanouchi $3$-filtration demonstrating $\lambda = (5,4,2,1)\in \Lambda_3(3,1)$. }
\end{figure}
\end{example}

In the following we analyze the notion of Yamanouchi $m$-filtration in more detail, showing that the sets $\Lambda_m(\Sigma)$ of diagrams admitting Yamanouchi $m$-filtrations of content $\Sigma^\vee$ can be characterized in terms of Littlewood--Richardson coefficients.

\begin{lemma}\label{lemma Lambda=littlewood rich}
With notation as in \Cref{def: Yamanouchi filtration},  the following are equivalent for $m\geq 2$:
\begin{enumerate}
\item $\l\in \Lambda_m(\Sigma)$,
\item $\l\in H_r$ admits a filtration
$
\emptyset=\l^{(0)}\subseteq \lambda^{(1)}\subseteq\cdots\subseteq\lambda^{(m)}=\l
$
which satisfies, for some not necessarily distinct $\s^{(1)},\s^{(2)},\ldots,\s^{(m)}\in\Sigma$, the condition $\lambda^{(1)}=\sigma^{(1)}$ and
\[
c^{{\l^{(i+1)}}^\vee}_{{\l^{(i)}}^\vee,{\s^{(i+1)}}^\vee}\neq 0 \quad \text{for } 1\leq i\leq m-1,
\]
\item there exist   (not necessarily distinct) $\s^{(1)}, \s^{(2)}, \ldots, \s^{(m)}\in \Sigma$ such that
\[
c_{{\s^{(1)}}^\vee,{\s^{(2)}}^\vee,\cdots, {\s^{(m)}}^\vee}^{\l^\vee}\neq 0.
\]
\end{enumerate}
\end{lemma}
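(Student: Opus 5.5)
The plan is to prove (1)$\Leftrightarrow$(2) step by step along the chain, and then (2)$\Leftrightarrow$(3) by expanding an iterated Schur product.

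The key single-step input is the duality recalled in \Cref{s: Schur}. Conjugation carries a Yamanouchi filling of a skew shape $\lambda/\mu$ with content $\nu^\vee$ to a Littlewood--Richardson filling of $\lambda^\vee/\mu^\vee$ with content $\nu^\vee$. The content is preserved, since conjugation only reflects the positions of the boxes. By the Littlewood--Richardson rule \cite[Proposition 3, p.~64]{FULTON_TABLEAUX}, such a filling exists if and only if $c^{\lambda^\vee}_{\mu^\vee,\nu^\vee}\neq 0$. I will verify this dictionary carefully: standardness in the sense of \Cref{def: standard} is strict rows and weak columns, which becomes semistandard after transposition; and the column-reading word, read bottom to top and left to right, becomes the reverse of the row-reading word of the transpose. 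This is the one place where the conventions must match exactly, and I expect it to be the main (if bookkeeping) obstacle. In particular this single-step statement is exactly $D(\mu,\nu)=\{\lambda\mid c^{\lambda^\vee}_{\mu^\vee,\nu^\vee}\neq0\}$.

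Next I treat the first step of the chain. The shape $\lambda^{(1)}/\emptyset=\lambda^{(1)}$ admits a Yamanouchi filling of content ${\sigma^{(1)}}^\vee$ if and only if $c^{{\lambda^{(1)}}^\vee}_{\emptyset,{\sigma^{(1)}}^\vee}\neq0$, and this holds if and only if $\lambda^{(1)}=\sigma^{(1)}$, since $s_\emptyset=1$. With this and the single-step criterion applied at each $i$, (1)$\Leftrightarrow$(2) is immediate. The membership $\lambda\in H_r$ is imposed in both conditions.

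For (2)$\Leftrightarrow$(3), I expand the iterated product inductively using \eqref{eq: iterated Schur}:
\[
c^{\lambda^\vee}_{{\sigma^{(1)}}^\vee,\ldots,{\sigma^{(m)}}^\vee}
=\sum_{\mu^{(2)},\ldots,\mu^{(m-1)}}
c^{{\mu^{(2)}}}_{{\sigma^{(1)}}^\vee,{\sigma^{(2)}}^\vee}\,
c^{\mu^{(3)}}_{\mu^{(2)},{\sigma^{(3)}}^\vee}\cdots
c^{\lambda^\vee}_{\mu^{(m-1)},{\sigma^{(m)}}^\vee}.
\]
Here the intermediate partitions $\mu^{(i)}$ range over $H_r^\vee$. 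Every Littlewood--Richardson coefficient is a nonnegative integer, so there is no cancellation, and the left side is nonzero if and only if some summand is nonzero. A nonzero summand is exactly a chain as in (2), obtained by setting $\lambda^{(i)}=(\mu^{(i)})^\vee$ and $\lambda^{(1)}=\sigma^{(1)}$.

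Finally I check the size bounds. A nonzero coefficient $c^\lambda_{\mu\nu}$ forces $\mu,\nu\subseteq\lambda$. Hence every intermediate $\mu^{(i)}$ is contained in $\lambda^\vee$, which has at most $r$ rows, so the truncation to $r$ variables loses no terms. For the same reason $\lambda^{(i)}\subseteq\lambda^{(i+1)}$ holds automatically, so the filtration condition in (2) is satisfied.
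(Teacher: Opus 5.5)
Your proposal is correct and follows the same route as the paper. You get (1)$\Leftrightarrow$(2) by transposing Yamanouchi fillings into Littlewood--Richardson tableaux and applying Fulton's rule, and (2)$\Leftrightarrow$(3) by expanding the iterated Schur product and using nonnegativity of the coefficients. The differences are cosmetic: you expand over all intermediate partitions at once where the paper inducts on $m$ via associativity, and you make explicit the first-step identification $\lambda^{(1)}=\sigma^{(1)}$ and the $r$-variable truncation check, which the paper leaves implicit.
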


\begin{proof}
$(1) \Leftrightarrow (2)$
Consider the filtration of $\l$ given in \Cref{def: Yamanouchi filtration}.
Fix $i$ in the given range and let $\mu=\l^{(i+1)}, \nu=\l^{(i)}$.
 Dualization sends the skew shape $\mu/\nu$ to $\mu^\vee/\nu^\vee$.
As noted at the end of  \cref{s: Schur}, under dualization, column-reading words become row-reading words, row-increasing tableaux become column-increasing tableaux, and the Yamanouchi condition becomes the Littlewood--Richardson condition, while tableau fillings do not change.

By  \cite[Proposition 3, p.~64]{FULTON_TABLEAUX}, $c^{\mu^\vee}_{\nu^\vee,\s^\vee}\neq 0$ if and only if there exists a Littlewood--Richardson tableau of shape $\mu^\vee/\nu^\vee$ and content $\sigma^\vee$. This holds if and only if $\mu/\nu$ admits a Yamanouchi filling with content $\s^\vee$, which is precisely the condition needed for the given filtration to correspond to an element of $\Lambda_m(\Sigma)$.

$(2) \Leftrightarrow (3)$
We argue by induction on $m$. If $m=2$, the condition $\lambda^{(1)}=\sigma^{(1)}$, together with $c^{\lambda^\vee}_{{\lambda^{(1)}}^\vee,{\sigma^{(2)}}^\vee}\neq0$, is exactly the condition $c^{\lambda^\vee}_{{\sigma^{(1)}}^\vee,{\sigma^{(2)}}^\vee}\neq0$.

Assume $m>2$. If (2) holds, the induction hypothesis applied to the first $m-1$ stages gives
$c^{{\lambda^{(m-1)}}^\vee}_{{\sigma^{(1)}}^\vee,\cdots,{\sigma^{(m-1)}}^\vee}\neq0$.
Together with $c^{\lambda^\vee}_{{\lambda^{(m-1)}}^\vee,{\sigma^{(m)}}^\vee}\neq0$, this shows that $s_{\lambda^\vee}$ occurs with nonzero coefficient in $s_{{\sigma^{(1)}}^\vee}\cdots s_{{\sigma^{(m)}}^\vee}$, and hence (3) holds.

Conversely, suppose that $c^{\lambda^\vee}_{{\sigma^{(1)}}^\vee,\cdots,{\sigma^{(m)}}^\vee}\neq0$. By associativity of multiplication of Schur functions,
\[
c^{\lambda^\vee}_{{\sigma^{(1)}}^\vee,\cdots,{\sigma^{(m)}}^\vee}
=\sum_{\nu}c^{\nu^\vee}_{{\sigma^{(1)}}^\vee,\cdots,{\sigma^{(m-1)}}^\vee}c^{\lambda^\vee}_{\nu^\vee,{\sigma^{(m)}}^\vee}.
\]
All Littlewood--Richardson coefficients are nonnegative, so there is a diagram $\nu$ for which both factors are nonzero. By induction, the first factor supplies a filtration from $\emptyset$ to $\nu$ satisfying (2). Appending $\lambda^{(m)}=\lambda$ and using $c^{\lambda^\vee}_{\nu^\vee,{\sigma^{(m)}}^\vee}\neq0$ completes the required filtration.
\end{proof}

\begin{remark}
\Cref{lemma Lambda=littlewood rich} gives the equivalent description
\begin{equation}\label{eq: Lambda}
\Lambda_m(\Sigma)= \left\{ \l \in H_r : c_{{\s^{(1)}}^\vee,{\s^{(2)}}^\vee,\cdots, {\s^{(m)}}^\vee}^{\l^\vee}\neq 0 \text{ for some } \s^{(1)}, \s^{(2)}, \ldots, \s^{(m)}\in \Sigma \right \}.
\end{equation}
We also denote
\[
\Lambda_m(\Sigma)_{\geq t}=\{\l_{\geq t} : \l\in \Lambda_m(\Sigma)\}.
\]
\end{remark}

\noindent We now arrive at our first main result: a description for the symbolic powers of $G$-invariant ideals.

\begin{theorem}\label{lemma: Symbolic power one diagram invariant}
Let $\Sigma\subset H_r$ be a set of Young diagrams and suppose $1\leq t\leq r$.
With notation as in \eqref{eq: Lambda}, for every positive integer $m$ we have:
\begin{enumerate}
\item $I_\Sigma^m=\displaystyle \sum_{\lambda\in\Lambda_m(\Sigma)} I_\lambda$;
\item $I_\Sigma^{(m)_t}= \displaystyle \sum_{\lambda\in\Lambda_m(\Sigma)} I_{\lambda_{\geq t}}$,
\item $\gamma_j \left(I_\Sigma^{(m)_t}\right)= \min \left \{\gamma_j(\l_{\geq t}) : \lambda\in\Lambda_m(\Sigma) \right \} $ for $1\leq j\leq r$.
\end{enumerate}
\end{theorem}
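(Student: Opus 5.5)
For part (1) I will induct on $m$, taking $m=1$ from $\Lambda_1(\Sigma)=\Sigma$. Assume $I_\Sigma^{m-1}=\sum_{\mu\in\Lambda_{m-1}(\Sigma)}I_\mu$. Then
\[
I_\Sigma^m=I_\Sigma^{m-1}I_\Sigma=\sum_{\mu\in\Lambda_{m-1}(\Sigma)}\sum_{\sigma\in\Sigma}I_\mu I_\sigma .
\]
By \Cref{theorem: product whitehead}, $I_\mu I_\sigma=\sum_{\lambda\in D(\mu,\sigma)\cap H_r}I_\lambda$. By \Cref{def: D}, $\lambda\in D(\mu,\sigma)$ exactly when $c^{\lambda^\vee}_{\mu^\vee,\sigma^\vee}\neq 0$. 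So the indices that occur are precisely the $\lambda\in H_r$ admitting a filtration as in \Cref{lemma Lambda=littlewood rich}(2) with last step $\mu\subseteq\lambda$. By that lemma, this set is $\Lambda_m(\Sigma)$. The only care needed is that the Littlewood–Richardson coefficients be computed in enough variables that no shape is lost. Intermediate shapes $\mu$ lie in $H_r$ because $\mu\subseteq\lambda$.

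For part (2) I will use \Cref{lem: symbolic_powers_maxass}, which gives $I^{(m)_t}=I^mR_{D_t}\cap R=I^m:D_{t-1}^\infty$. By \Cref{lem: <v}, each summand of (1) decomposes as $I_\lambda=I_{\lambda_{\ge t}}\cap I_{\lambda_{<t}}$, where $I_{\lambda_{\ge t}}$ has associated primes $D_{\lambda_i}$ with $\lambda_i\ge t$ (all contained in $D_t$), and $I_{\lambda_{<t}}$ has associated primes $D_{\lambda_i}$ with $\lambda_i<t$ (all not contained in $D_t$). Apply \Cref{lemma: sum and intersection}(2) with $\lambda_i\mapsto\lambda_{\ge t}$ and $\mu_i\mapsto\lambda_{<t}$. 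This gives
\[
I_\Sigma^m=\Bigl(\sum_{\lambda\in\Lambda_m(\Sigma)}I_{\lambda_{\ge t}}\Bigr)\cap J_1\cap\cdots\cap J_v,
\]
with $J_i\supseteq I_{\lambda^{(i)}_{<t}}$ and each $J_i$ $G$-invariant. Since $I_{\lambda_{<t}}\not\subseteq D_t$, each $J_i\not\subseteq D_t$, so $J_iR_{D_t}=R_{D_t}$. Localizing therefore gives $I_\Sigma^mR_{D_t}=\bigl(\sum I_{\lambda_{\ge t}}\bigr)R_{D_t}$.

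It then remains to show that $A:=\sum_{\lambda}I_{\lambda_{\ge t}}$ is contracted from $R_{D_t}$, i.e.\ that every associated prime of $A$ lies inside $D_t$. I will do this with \Cref{lemma: symbolic power for sum of ideal invariant}. After passing to an antichain, every diagram in the index set has all rows of length $\ge t$, so the maximal associated prime of $A$ is $D_p$ with $p\ge t$, and $D_p\subseteq D_t$. Hence $A R_{D_t}\cap R=A$, proving (2). Here the subtle point is the case $\lambda_{\ge t}=\varnothing$: then $I_\varnothing=R$, the symbolic power is the unit ideal, and this is consistent with the formula. I expect the bookkeeping in the application of \Cref{lemma: sum and intersection}(2), making sure the $J_i$ genuinely avoid $D_t$, to be the main obstacle. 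I would double-check it using that $J_i$ is $G$-invariant, so $J_i\subseteq D_t$ would force $I_{\lambda_{<t}}\subseteq D_t$, contradicting \Cref{lem: <v}.

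Part (3) then follows from (2) together with the fact that $\gamma_j$ is a valuation, so $\gamma_j$ of a sum of ideals is the minimum over the summands. I will combine this with $\gamma_j(I_\tau)=\gamma_j(\tau)$, computed from \cite[Theorems 4.1, 7.1]{deconcini1980young}: the decomposition $I_\tau=\bigoplus_{\rho\supseteq\tau}M_\rho$ together with $\gamma_j(\rho)\ge\gamma_j(\tau)$ for $\rho\supseteq\tau$, and $\gamma_j$ of a standard bitableau of shape $\rho$ equal to $\gamma_j(\rho)$.
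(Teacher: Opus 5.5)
Your proposal is correct and follows the paper's proof essentially step for step. For (1) you induct with Whitehead's product formula. For (2) you feed the splitting $I_\lambda=I_{\lambda_{\geq t}}\cap I_{\lambda_{<t}}$ into \Cref{lemma: sum and intersection}(2), discard the components $J_i\not\subseteq D_t$ after localizing, and invoke \Cref{lemma: symbolic power for sum of ideal invariant} to see that $\sum_{\lambda}I_{\lambda_{\geq t}}$ is contracted from $R_{D_t}$. The paper routes this through an auxiliary integer $p(t)$, which is cosmetic.

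For the lower bound in (3) the paper uses $I_{\lambda_{\geq t}}\subseteq\overline{I_{\lambda_{\geq t}}}=D_{\lambda_{\geq t}}$ instead of the decomposition $I_\tau=\bigoplus_{\rho\supseteq\tau}M_\rho$. Your route also works, but justify $\gamma_j\geq\gamma_j(\rho)$ on $M_\rho$ by noting that $M_\rho$ is spanned by $G$-translates of $K_\rho$, each a linear combination of (not necessarily standard) products of minors of shape $\rho$. The point is that $M_\rho$ is not literally the span of the standard bitableaux of shape $\rho$; for instance $x_{11}x_{22}\notin M_{(1,1)}$.
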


\begin{proof}

(1) The claim follows by induction on $m$ using Theorem~\ref{theorem: product whitehead}.

Indeed the condition that
$
\emptyset=\lambda^{(0)}\subseteq \lambda^{(1)}\subseteq\cdots\subseteq\lambda^{(m)}=\lambda
$
is a Yamanouchi $m$-filtration with content $\Sigma^\vee$ says that $\lambda^{(1)}\in \Sigma$, which settles the case $m=1$. When $m>1$ it says that $\lambda^{(m)}\in D(\lambda^{(m-1)},\sigma^{(m)})$  for some $\sigma^{(m)} \in \Sigma$ according to \Cref{def: D}. Since $I_{\lambda^{(m-1)}}\subseteq I_\Sigma^{m-1}$ by the inductive hypothesis, \Cref{theorem: product whitehead} gives
\[
I_\lambda\subseteq I_{\lambda^{(m-1)}}I_{\sigma^{(m)}}
\subseteq I_\Sigma^{m-1}I_{\sigma^{(m)}}
\subseteq I_\Sigma^m.
\]

Conversely, the inductive hypothesis gives
\[
I_\Sigma^{m-1}=\sum_{\mu\in\Lambda_{m-1}(\Sigma)}I_\mu,
\]
and hence
\[
I_\Sigma^m=I_\Sigma^{m-1}I_\Sigma
 =\sum_{\substack{\mu\in\Lambda_{m-1}(\Sigma)\\ \sigma\in\Sigma}}I_\mu I_\sigma.
\]
By \Cref{theorem: product whitehead}, every isotypic ideal $I_\lambda$ occurring in this sum satisfies $\lambda\in D(\mu,\sigma)$ for some $\mu\in\Lambda_{m-1}(\Sigma)$ and $\sigma\in\Sigma$. Concatenating a Yamanouchi $(m-1)$-filtration of $\mu$ with a Yamanouchi filling of $\lambda/\mu$ of content $\sigma^\vee$ produces a Yamanouchi $m$-filtration of $\lambda$. Therefore $\lambda\in\Lambda_m(\Sigma)$.

(2) We first dispose of the case in which $I_\Sigma\not\subseteq D_t$ and thus $
I_\Sigma^{(m)_t}=I_\Sigma^mR_{D_t}\cap R=R$. Since $I_\Sigma^m\not\subseteq D_t$, there is some
$\lambda\in\Lambda_m(\Sigma)$ such that $I_\lambda\not\subseteq D_t$.
Equivalently, $\lambda_1<t$, and therefore
$\lambda_{\geq t}=\varnothing$. With the convention
$I_\varnothing:=R$,
the sum $\sum_{\lambda\in\Lambda_m(\Sigma)}I_{\lambda_{\geq t}}$
 equals $R$.

Now assume that $I_\Sigma \subseteq D_t$.
Set
\[
p(t)=\min\{\lambda_i:\lambda\in\Lambda_m(\Sigma),\ \lambda_i\geq t\},
\]
the shortest row length at least $t$ occurring among the diagrams in $\Lambda_m(\Sigma)$. Then \Cref{def: <v} gives
$I_{\lambda_{\geq t}}=I_{\lambda_{\geq p(t)}}$ for every $\lambda\in\Lambda_m(\Sigma)$.

Applying Lemma~\ref{lem: <v} and Lemma~\ref{lemma: sum and intersection}, we write
\[
I_\Sigma^m =  \sum_{\lambda \in \Lambda_m(\Sigma)} I_\lambda
=\sum_{\lambda\in\Lambda_m(\Sigma)}(I_{\lambda_{\ge p(t)}}\cap I_{\lambda_{<  p(t)}})
=\Bigl(\sum_{\lambda\in\Lambda_m(\Sigma)} I_{\lambda_{\ge p(t)}}\Bigr)\cap\Bigl(\bigcap_i J_i\Bigr),
\]
where each $J_i$ contains at least one $I_{\lambda_{< p(t)}}$  as a summand for some $\lambda \in \Lambda_m(\Sigma)$.  By definition of $p(t)$ we see that the row length of $\lambda_{< p(t)}$ is strictly smaller than $t$. Thus $I_{\lambda_{< p(t)}}\not\subseteq D_t$ and therefore $J_i\not\subseteq D_t$ for each $i$. By contrast,  by \Cref{lem: <v} and \Cref{lemma: symbolic power for sum of ideal invariant}, the first ideal in the decomposition above has $D_{p(t)}$ as its unique maximal associated prime and it satisfies $D_{p(t)}\subseteq D_t$.
The generalized symbolic power of interest is according to \Cref{def: symbolic_power}
 $$I_\Sigma^{(m)_t}= \sum_{\lambda \in \Lambda_m(\Sigma)}  I_{\lambda_{\geq p(t)}} = \sum_{\lambda \in \Lambda_m(\Sigma)}  I_{\lambda_{\geq t}} .$$

(3) From \eqref{eq: nu}, part~(2), and properties of valuations we deduce
\[
\gamma_j\bigl(I_\Sigma^{(m)_t}\bigr)
 =\min\{\gamma_j(I_{\lambda_{\geq t}}):\lambda\in\Lambda_m(\Sigma)\}.
\]
It remains to show that $\gamma_j(I_{\lambda_{\geq t}})=\gamma_j(\lambda_{\geq t})$, where $\gamma_j$ takes the valuative meaning  \eqref{eq: nu} on the left side and the combinatorial meaning \eqref{eq: gamma} on the right. The standard monomial $K_{\lambda_{\geq t}}$ belongs to $I_{\lambda_{\geq t}}$ by \Cref{def: I_sigma} and satisfies
$\gamma_j(K_{\lambda_{\geq t}})=\gamma_j(\lambda_{\geq t})$; hence
$\gamma_j(I_{\lambda_{\geq t}})\leq\gamma_j(\lambda_{\geq t})$.
On the other hand,
\[
I_{\lambda_{\geq t}}\subseteq\overline{I_{\lambda_{\geq t}}}=D_{\lambda_{\geq t}},
\]
where the equality follows from \cite[Theorem 8.1]{deconcini1980young}. Therefore
$\gamma_j(I_{\lambda_{\geq t}})\geq\gamma_j(D_{\lambda_{\geq t}})=\gamma_j(\lambda_{\geq t})$.
The two inequalities complete the proof.
	\end{proof}

	\begin{example}\label{ex: symb 1}
	For $r=5$ and $\s=(3,2)$  we first compute $I_\s^3$; this will be used to obtain the third generalized symbolic power of $I_\s$. Towards this end a computation with the package \texttt{SchurRings} in \cite{M2} gives
	\begin{multline*}
	s_{(3,2)^\vee}^3= 2\,s_{(5,5,5)^\vee}+
			12\,s_{(5,5,4,1)^\vee}+
			21\,s_{(5,5,3,2)^\vee}+
			15\,s_{(5,5,3,1,1)^\vee}+
			14\,s_{(5,5,2,2,1)^\vee}+
			4\,s_{(5,5,2,1,1,1)^\vee}+\\
			15\,s_{(5,4,4,2)^\vee}+
			11\,s_{(5,4,4,1,1)^\vee}+
      			14\,s_{(5,4,3,3)^\vee} +
      			32\,s_{(5,4,3,2,1)^\vee}+
			8\,s_{(5,4,3,1,1,1)^\vee}+
			11\,s_{(5,4,2,2,2)^\vee}+\\
			9\,s_{(5,4,2,2,1,1)^\vee}+
			11\,s_{(5,3,3,3,1)^\vee}+
			12\,s_{(5,3,3,2,2)^\vee}+
			9\,s_{(5,3,3,2,1,1)^\vee}+
      			6\,s_{(5,3,2,2,2,1)^\vee}+
     			s_{(5,2,2,2,2,2)^\vee}+\\
			4\,s_{(4,4,4,3)^\vee}+
			8\,s_{(4,4,4,2,1)^\vee}+
			2\,s_{(4,4,4,1,1,1)^\vee}+
			9\,s_{(4,4,3,3,1)^\vee}+
			9\,s_{(4,4,3,2,2)^\vee}+
			8\,s_{(4,4,3,2,1,1)^\vee}+\\
			4\,s_{(4,4,2,2,2,1)^\vee}+
			6\,s_{(4,3,3,3,2)^\vee}+
			4\,s_{(4,3,3,3,1,1)^\vee}+
			6\,s_{(4,3,3,2,2,1)^\vee}+
			2\,s_{(4,3,2,2,2,2)^\vee}+
			s_{(3,3,3,3,3)^\vee}+\\
			2\,s_{(3,3,3,3,2,1)^\vee}+
			s_{(3,3,3,2,2,2)^\vee}.
       \end{multline*}
      Therefore \eqref{eq: Lambda}  and  \Cref{lemma: Symbolic power one diagram invariant}~(1) give 
    \begin{multline*}
    I_{(3,2)}^3= I_{(5,5,5)}+
			 I_{(5,5,4,1)}+
			 I_{(5,5,3,2)}+
			 I_{(5,5,3,1,1)}+
			 I_{(5,5,2,2,1)}+
			 I_{(5,5,2,1,1,1)}+\\
			 I_{(5,4,4,2)}+
			 I_{(5,4,4,1,1)}+
      			 I_{(5,4,3,3)} +
      			 I_{(5,4,3,2,1)}+
			 I_{(5,4,3,1,1,1)}+
			 I_{(5,4,2,2,2)}+\\
			 I_{(5,4,2,2,1,1)}+
			 I_{(5,3,3,3,1)}+
			 I_{(5,3,3,2,2)}+
			 I_{(5,3,3,2,1,1)}+
      			 I_{(5,3,2,2,2,1)}+
     			 I_{(5,2,2,2,2,2)}+\\
			 I_{(4,4,4,3)}+
			 I_{(4,4,4,2,1)}+
			 I_{(4,4,4,1,1,1)}+
			 I_{(4,4,3,3,1)}+
			 I_{(4,4,3,2,2)}+
			 I_{(4,4,3,2,1,1)}+\\
			 I_{(4,4,2,2,2,1)}+
			 I_{(4,3,3,3,2)}+
			 I_{(4,3,3,3,1,1)}+
			 I_{(4,3,3,2,2,1)}+
			 I_{(4,3,2,2,2,2)}+
			 I_{(3,3,3,3,3)}+
			 I_{(3,3,3,3,2,1)}+
			 I_{(3,3,3,2,2,2)}.
\end{multline*}
 Applying \Cref{lemma: Symbolic power one diagram invariant}~(2) gives an expression involving  32 terms  for $I_{(3,2)}^{(3)_2}$, corresponding to removing the parts of size 1 from the partitions listed in the sum for  $I_{(3,2)}^3$ above. Rather than list these terms, we will revisit the computation of the symbolic powers in \Cref{ex: 2} with an alternate formula that yields only the 14 irredundant terms necessary to describe $I_{(3,2)}^{(3)_2}$.
\end{example}

To compute the asymptotic invariants of isotypic ideals  $I_\s$ it will be convenient to relate them to the asymptotic invariants of $I_{\s_{\leftarrow p}}$ for a smaller partition $\s_{\leftarrow p}$. This will also lead to a formula for generalized symbolic powers of $G$-invariant ideals that is more computationally efficient.

\begin{notation}\label{not: s'}
For a Young diagram $\sigma$ and a positive integer $p$, define
\[
\sigma_{\leftarrow p}
=\bigl(\sigma_i-p+1:\sigma_i\geq p\bigr).
\]
Thus $\sigma_{\leftarrow p}$ is obtained by discarding rows shorter than $p$ then deleting the first $p-1$ columns; it may be the empty diagram. Conversely, for a Young diagram $\mu$, define
\[
\mu_{\rightarrow p}=(\mu_i+p-1)_i.
\]
Then $(\mu_{\rightarrow p})_{\leftarrow p}=\mu$ for every $\mu$, while
$(\sigma_{\leftarrow p})_{\rightarrow p}=\sigma$ precisely when every row of $\sigma$ has length at least $p$ (with the same statement for the empty diagram under the evident convention).

For a set $\Sigma$ of Young diagrams, set
\[
\Sigma_{\leftarrow p}=\{\sigma_{\leftarrow p}:\sigma\in\Sigma\}
\quad \text{ and } \quad
\Sigma_{\rightarrow p}=\{\sigma_{\rightarrow p}:\sigma\in\Sigma\}.
\]

\Cref{fig5} illustrates this correspondence.

\begin{figure}[h]
\begin{tikzpicture}[scale=0.5]
			\draw[black] (0,0) -- (4,0) -- ( 4,-1.5)  -- (3,-1.5) -- (3,-2.5) -- (2,-2.5) --(2,-4)-- (0,-4)  -- cycle;
			\draw[fill=gray!30] (1.5,0) -- (4,0) -- ( 4,-1.5)  -- (3,-1.5) -- (3,-2.5) -- (2,-2.5) --(2,-4)-- (1.5,-4)  -- cycle;
			\node at  (0.5,-.5){ $\sigma$ };
			\draw [black]  (1.5,0) -- (1.5,-4);
			\node at  (2.5,-.5){ $\sigma_{\leftarrow p}$ };
\end{tikzpicture}
		\caption{Partition $\s_{\leftarrow p}$ (gray) associated to the partition $\s$ (white and gray).}\label{fig5}
\end{figure}

\end{notation}

In the next result we are able to relate the symbolic powers of $\Sigma$ and $\Sigma_{\leftarrow p}$ as well as the sets of diagrams that have Yamanouchi $m$-filtrations with content $\Sigma^\vee$ and $\Sigma_{\leftarrow p}^\vee$, respectively. This leads to a more economical computation of the generalized symbolic powers as described in \Cref{prop: sigma'}~(3) below.  In the formula in part (3) each summand is irredundant, assuming that $\l$ ranges over the minimal partitions  in $\Lambda_m(\Sigma_{\leftarrow t})$.

\begin{theorem}\label{prop: sigma'}
Let $\Sigma\subset H_r$ be a set of Young diagrams and let $1\leq p\leq t\leq r$ be positive integers. Then, using \Cref{def: Yamanouchi filtration} and \Cref{not: s'}, the following hold for each integer $m\geq1$:
\begin{enumerate}
\item the map $\l\mapsto\l_{\leftarrow p}$ gives a bijection between the minimal diagrams, with respect to containment, in
\[
\Lambda_m(\Sigma)_{\geq t}:=\{\lambda_{\geq t}:\lambda\in\Lambda_m(\Sigma) \}
\quad \text{ and } \quad
\Lambda_m(\Sigma_{\leftarrow p})_{\geq t-p+1}
 :=\{\nu_{\geq t-p+1}:\nu\in\Lambda_m(\Sigma_{\leftarrow p})\},
\]
\item  there are equalities
\[
\gamma_j\bigl(I_\Sigma^{(m)_t}\bigr)
 =\gamma_{j-p+1}\bigl(I_{\Sigma_{\leftarrow p}}^{(m)_{t-p+1}}\bigr)
 \qquad\text{for all }j \text{ such that } p\leq j\leq r,
\]
where the former ideal is computed with respect to a $r\times n$ matrix and the latter with respect to a $(r-p+1)\times (n-p+1)$ matrix,
\item an alternate formula for the generalized symbolic powers is
\[
I_\Sigma^{(m)_t}=\sum_{\lambda\in\Lambda_m(\Sigma_{\leftarrow t})}I_{\lambda_{\rightarrow t}}.
\]
\end{enumerate}
\end{theorem}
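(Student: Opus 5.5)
The plan is to avoid manipulating Littlewood--Richardson coefficients directly. Instead I would use the classical localization trick for generic matrices: invert an entry, reduce to a generic matrix of one size smaller, and iterate $p-1$ times. Part (1) is then the combinatorial shadow of the resulting ideal-theoretic identities, and parts (2) and (3) follow from them. This plan does not invoke the non-vanishing theorem from the appendix.

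\emph{Localization step.} Let $x=x_{11}$. By elementary row and column operations over $R_x$, which amount to conjugating $X$ by unipotent matrices with entries in $R_x$, one gets
\[
R_x\cong R'[x_{11}^{\pm1},x_{1j},x_{i1}],
\]
where $R'=\kk[X']$ and $X'$ is a generic $(r-1)\times(n-1)$ matrix with entries $x'_{ij}=x_{ij}-x_{i1}x_{1j}/x_{11}$. A $t\times t$ minor of $X$ involving row $1$ and column $1$ equals $x_{11}$ times a $(t-1)\times(t-1)$ minor of $X'$. Hence $D_tR_x=D'_{t-1}R_x$. The first claim to prove is the analogue for isotypic ideals:
\[
I_\sigma R_x=I'_{\sigma_{\leftarrow 2}}R_x,
\]
where rows of length $1$ become units.

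For "$\supseteq$": $K_\sigma=x_{11}^{s}\,K'_{\sigma_{\leftarrow2}}$ with $s$ the number of rows of $\sigma$. The subgroup of $G$ fixing the first basis vectors (block-triangular matrices) acts compatibly on $X'$ through $\GL_{r-1}\times\GL_{n-1}$. Hence the $G$-orbit of $K_\sigma$ generates $I'_{\sigma_{\leftarrow 2}}$ after inverting $x_{11}$.

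For "$\subseteq$": $I_\sigma R_x$ is stable under the $\GL_{r-1}\times\GL_{n-1}$ action. Also $I_\sigma\subseteq D_\sigma$, and this, together with the valuation values $\gamma_j(I_\sigma)=\gamma_j(\sigma)$, confines $I_\sigma R_x\cap R'$ to a $G'$-invariant ideal. That ideal is $I'_{\sigma_{\leftarrow 2}}$ by \Cref{rem: G-invariant}~(2), after comparing isotypic components (highest weight vectors). I expect this identification to be the main obstacle, because isotypic ideals lack standard monomial bases. The first thing I would try is to compare highest-weight vectors: the $U$-invariants of $M_\tau$ are spanned by $K_\tau$, and $K_\tau/x_{11}^{\ell(\tau)}=K'_{\tau_{\leftarrow2}}$.

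\emph{Symbolic powers and valuations.} Localization commutes with powers, sums and the construction $JR_{D_t}\cap R$. Therefore, for $t\geq2$,
\[
I_\Sigma^{(m)_t}R_x=I'^{(m)_{t-1}}_{\Sigma_{\leftarrow2}}R_x .
\]
Contraction from $R_x$ back to $R$ is saturation by $x$. For a $G$-invariant ideal this coincides with saturation by $D_1$, and $I^{(m)_t}=I^m:D_{t-1}^\infty$ is already $D_1$-saturated when $t\geq2$ by \Cref{lem: symbolic_powers_maxass}. Iterating $p-1$ times gives an inclusion-preserving correspondence between $D_{p-1}$-saturated $G$-invariant ideals of $R$ and $G'$-invariant ideals of the smaller ring. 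Under this correspondence $I_\tau$ with all rows of length at least $p$ matches $I'_{\tau_{\leftarrow p}}$.

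Part (2) then follows. Since $\gamma_j$ is the $D_jR_{D_j}$-adic order and $D_jR_x=D'_{j-1}R_x$, we get $\gamma_j=\gamma'_{j-1}$ on $R_x$, and iterating gives $\gamma_j=\gamma'_{j-p+1}$. Valuations of ideals are unchanged under extension to $R_x$, because $x\notin D_j$ for $j\geq 2$ and the adjoined variables do not affect the order.

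Parts (1) and (3). Take $p=t$. Then
\[
I_\Sigma^{(m)_t}R_x=(I'_{\Sigma_{\leftarrow t}})^{(m)_1}R_x=(I'_{\Sigma_{\leftarrow t}})^m R_x .
\]
By \Cref{lemma: Symbolic power one diagram invariant}~(1), this equals $\sum_{\lambda\in\Lambda_m(\Sigma_{\leftarrow t})}I'_\lambda R_x$. Each $I'_\lambda$ corresponds to $I_{\lambda_{\rightarrow t}}$, which has all rows of length at least $t$ and is therefore saturated. Contracting yields (3).

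For (1), write $I_\Sigma^{(m)_t}$ in two ways: by \Cref{lemma: Symbolic power one diagram invariant}~(2), and by applying the same localization argument with general $p\le t$. The minimal diagrams indexing the isotypic decomposition of a $G$-invariant ideal are uniquely determined by \Cref{rem: G-invariant}~(1). Comparing the two decompositions under $\tau\mapsto\tau_{\leftarrow p}$ gives the bijection of minimal elements.
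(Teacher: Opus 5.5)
Your overall architecture matches the paper's. You pivot at $x_{11}$, match $D_t$ with $D'_{t-1}$ and $I_\sigma$ with $I'_{\sigma_{\leftarrow 2}}$ after inverting, transport the generalized symbolic powers, expand both sides with \Cref{lemma: Symbolic power one diagram invariant}, and compare minimal diagrams. The paper inverts $\Delta_1,\dots,\Delta_{p-1}$ at once and descends via faithful flatness and \cite[Proposition 2.1]{Walker}. Your iteration, which uses that $x_{11}$-saturation equals $D_1$-saturation for $G$-invariant ideals, and your valuation-transport proof of (2) are fine variants.

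The genuine gap is the step you yourself flag, $I_\sigma R_x\subseteq I'_{\sigma_{\leftarrow2}}R_x$; the argument you sketch does not close it. Stability under $\GL_{r-1}\times\GL_{n-1}$ only makes the contraction $I_\sigma R_x\cap R'$ a $G'$-invariant ideal. Identifying that contraction does not by itself show that it generates $I_\sigma R_x$. The inputs $I_\sigma\subseteq D_\sigma$ and $\gamma_j(I_\sigma)=\gamma_j(\sigma)$ cannot help, since $D_\sigma=\overline{I_\sigma}$ shares them; they give only $I_\sigma R_x\subseteq D'_{\sigma_{\leftarrow2}}R_x$.

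What is missing is the full stabilizer $P\subseteq G$ of the line $\kk x_{11}$. It acts on $R_x$ and preserves $I_\sigma R_x$. Its unipotent radical fixes $x_{11}$ and $X'$ and translates the $x_{1j}$ and $x_{i1}$ ($i,j\geq 2$) by $\kk$-multiples of the unit $x_{11}$. In characteristic zero, an ideal of $R'[x_{11}^{\pm1}][x_{1j},x_{i1}]$ stable under these translations is closed under $\partial/\partial x_{1j}$ and $\partial/\partial x_{i1}$. It is therefore generated by its intersection with $R'[x_{11}^{\pm1}]$. Invariance under rescaling $x_{11}$ with $X'$ fixed (also in $P$) then gives $I_\sigma R_x=J_0R_x$, with $J_0=I_\sigma R_x\cap R'$ a $G'$-stable ideal.

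Only then does your highest-weight comparison finish the job: $K'_\tau\in J_0$ iff $x_{11}^NK_{\tau_{\rightarrow2}}=K_{\tau_{\rightarrow2}\sqcup(1^N)}\in I_\sigma$ for some $N$, iff $\tau_{\rightarrow2}\sqcup(1^N)\supseteq\sigma$ for some $N$, iff $\tau\supseteq\sigma_{\leftarrow2}$. The paper's ``consequently'' after $\Phi(K_\sigma(X))=x_{11}^sK_{\sigma_{\leftarrow2}}(Y)$ also justifies only $\supseteq$, so either argument has to supply this step.

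A smaller gap sits in (3). That each $I_{\lambda_{\rightarrow t}}$ is $D_{t-1}$-saturated does not make their sum saturated, and contracting returns only $\bigl(\sum_{\lambda\in\Lambda_m(\Sigma_{\leftarrow t})}I_{\lambda_{\rightarrow t}}\bigr):D_{t-1}^\infty$. You need that this sum has all its associated primes among the $D_j$ with $j\geq t$. That is \Cref{lemma: symbolic power for sum of ideal invariant} applied to its minimal diagrams, since every row has length at least $t$. Alternatively, deduce (3) from (1) with $p=t$ and \Cref{lemma: Symbolic power one diagram invariant}~(2), as the paper does.

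Likewise, your phrase ``localization commutes with $JR_{D_t}\cap R$'' hides the descent from $R_x$ to $R'$. It holds because $R_x$ is a polynomial--Laurent extension of $R'$, hence faithfully flat, and associated primes of ideals of $R'$ extend to primes of $R_x$.
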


\begin{proof}
(1) We proceed by induction on $p$. The case $p=1$ is tautological, because $\Sigma_{\leftarrow 1}=\Sigma$.

Assume $p\geq2$.
Let $X=(x_{ij})$ be an $r\times n$ matrix of indeterminates, and let
$
Y=(y_{ij})_{\substack{2\leq i\leq r\\2\leq j\leq n}}
$
be an $(r-1)\times(n-1)$ matrix of indeterminates over $\kk$. Define a ring homomorphis m$\Phi$ by
\begin{align*}
x_{ij}&\longmapsto y_{ij}+x_{1j}x_{i1}x_{11}^{-1}
&& (2\leq i\leq r,\ 2\leq j\leq n),\\
x_{1j}&\longmapsto x_{1j},\qquad
x_{i1}\longmapsto x_{i1}
&& (1\leq j\leq n,\ 1\leq i\leq r).
\end{align*}
This gives as in \cite[Lemma 3.4.5]{bruns2022determinants}  an isomorphism
\begin{equation}\label{eq: iso}
S:=\kk[x_{ij}][x_{11}^{-1}]
\overset{\Phi}{\cong}
\kk[y_{ij}:2\leq i\leq r,\ 2\leq j\leq n]
[x_{11},\ldots,x_{1n},x_{21},\ldots,x_{r1}][x_{11}^{-1}]=:S'.
\end{equation}

 Elementary row and column operations with pivot $x_{11}$ transform $\Phi(X)$ into
\[
X'=\begin{pmatrix}x_{11}&0\\0&Y\end{pmatrix}.
\]
Thus $\Phi$ identifies the extended determinantal ideals $D_t(X)S$ and $D_{t-1}(Y)S'$ via $\Phi(D_t(X)S)=x_{11}D_{t-1}(Y)S'=D_{t-1}(Y)S'$.
Moreover, if $s$ is the number of rows of $\sigma$, then
\[
\Phi(K_\sigma(X))=x_{11}^sK_{\sigma_{\leftarrow2}}(Y),
\]
and consequently, since  the action of $\GL(X')$  fixes $x_{11}$ and restricts to an action on $Y$, we have
\begin{equation}\label{eq: iso 2}
\Phi(I_\sigma(X)S)
=x_{11}^sI_{\sigma_{\leftarrow2}}(Y)S'
= I_{\sigma_{\leftarrow2}}(Y)S'.
\end{equation}
Summing over $\sigma\in\Sigma$ gives
\begin{equation}\label{eq: iso 2 Sigma}
\Phi(I_\Sigma(X)S)= I_{\Sigma_{\leftarrow2}}(Y)S'.
\end{equation}

Repeating the one-pivot step described above $p-1$ times produces an $(r-p+1)\times(n-p+1)$ matrix $W=[w_{ij}]$ and an isomorphism $\Psi: R'\to S''$ where
\begin{eqnarray*}
R' &=&\kk[x_{ij}: 1\leq i\leq r, 1\leq j\leq n][\Delta_1^{-1},\ldots,\Delta_{p-1}^{-1}], \\
A &=& \kk[w_{ij} : p\leq i\leq r, p\leq j\leq n]\\
S'' &=& A[z_1, \ldots, z_N] [u_1^{\pm1}, \ldots, u_{p-1}^{\pm1}],
\end{eqnarray*}
the elements $\Delta_i$ are the leading principal minors of $X$ with the convention $\Delta_0=1$,  $u_i=\Delta_i/\Delta_{i-1}$ are the successive pivot entries and $z_i$ are the successive border entries. These elements $z_1, \ldots, z_N, u_1, \ldots, u_{p-1}$ are algebraically independent over $A$ thus $S''$ is a free extension of $A$ with basis given by the monomials in these variables (Laurent monomials in the $u_i$'s) .

Induction on $p$ with base case given by \eqref{eq: iso 2 Sigma} for $p=2$ yields the analogous identity
\begin{equation}\label{eq: iso 2' Sigma}
\Psi(I_\Sigma(X)R')= I_{\Sigma_{\leftarrow p}}(W)S''.
\end{equation}

Since $\Psi(D_t(X)R')=D_{t-p+1}(W)S''$, taking powers in \eqref{eq: iso 2' Sigma} and localizing gives
\begin{equation}\label{eq: iso 3 Sigma}
\Psi(I_\Sigma(X)^{(m)_t}R') = \Psi(I_\Sigma(X)^m R'_{D_t(X)R'} \cap R')
=  I_{\Sigma_{\leftarrow p}}(W)^m S''_{D_{t-p+1}(W)S''} \cap S'' = I_{\Sigma_{\leftarrow p}}(W)^{(m)_{t-p+1}}S''.
\end{equation}
The first equality in \eqref{eq: iso 3 Sigma} can be justified as follows.
Since \(p\leq t\), each \(\Delta_i\), for \(i\leq p-1\), has size
strictly smaller than \(t\), and hence \(\Delta_i\notin D_t(X)\).
Consequently
$
R_{D_t(X)}= R'_{D_t(X)R'}
$
and if \(L\subseteq R\) is an ideal then
\[
\bigl(LR_{D_t(X)}\cap R\bigr)R'
=
LR_{D_t(X)}\cap R'
=
LR'_{D_t(X)R'}\cap R'.
\]
Indeed, only the first equality requires explanation. If
\(f/u\in LR_{D_t(X)}\cap R'\), with \(f\in R\) and \(u\) is in the multiplicative set generated by $\Delta_1, \ldots, \Delta_{p-1}$, then
\(u\) is a unit in \(R_{D_t(X)}\), so \(f\in LR_{D_t(X)}\cap R\).
Thus \(f/u\) belongs to the left-hand side; the reverse containment stems from the last equality.

The last equality in \eqref{eq: iso 3 Sigma} holds because $S''$ is a free, hence faithfully flat extension of $A$ and every associated prime of a $\GL$--invariant ideal of $A$ extends to a prime ideal of $S''$ \cite[Proposition 2.1]{Walker}.

Using \Cref{lemma: Symbolic power one diagram invariant}~(2)   we rewrite \eqref{eq: iso 3 Sigma}  as
\[
\sum_{\s\in \Lambda_m(\Sigma)_{\geq t}} \Psi(I_\s R')
=
\sum_{\s\in \Lambda_m(\Sigma_{\leftarrow p})_{\geq {t-p+1}}}I_\s(W) S''.
\]
Using the identity \eqref{eq: iso 2' Sigma}, we conclude
\begin{equation}\label{eq: iso 4}
\sum_{\s\in (\Lambda_m(\Sigma)_{\geq t})_{\leftarrow p}} I_\s(W) S''
=
\sum_{\s\in \Lambda_m(\Sigma_{\leftarrow p})_{\geq {t-p+1}}}I_\s(W) S''.
\end{equation}
Since the ring $S''$ is a free, hence a faithfully flat extension of $A$, we conclude by contraction to $A$ the corresponding equality in $A$, that is,
 \begin{equation}\label{eq: iso 5}
\sum_{\s\in (\Lambda_m(\Sigma)_{\geq t})_{\leftarrow p}} I_\s(W)
=
\sum_{\s\in \Lambda_m(\Sigma_{\leftarrow p})_{\geq {t-p+1}}}I_\s(W).
\end{equation}
By uniqueness of minimal isotypic decompositions, the minimal elements of the indexing sets in \eqref{eq: iso 5} must be equal.

(2) By \Cref{lemma: Symbolic power one diagram invariant}~(2) we have
\begin{eqnarray*}
 I_\Sigma(X)^{(m)_t} &=& \sum_{\s\in \Lambda_m(\Sigma)_{\geq t}} I_\s\\
  I_{\Sigma_{\leftarrow p}}(W)^{(m)_{t-p+1}}&=& \sum_{\s\in \Lambda_m(\Sigma_{\leftarrow p})_{\geq {t-p+1}}}I_\s.
\end{eqnarray*}
The formulas for the valuations now follow from part (1) using that $\gamma_j(\mu_{\rightarrow p})=\gamma_{j-p+1}(\mu)$.

(3)
The formula arises as an application of \Cref{lemma: Symbolic power one diagram invariant}~(2) and part (1). Indeed part (1) applied with  $p=t$ yields that the minimal elements of the sets $\Lambda_m(\Sigma)_{\geq t}$ and $(\Lambda_m(\Sigma_{\leftarrow t}))_{\rightarrow t}$ are the same. Passing to the sum of isotypic ideals indexed by these partitions and applying the formula for symbolic powers in \Cref{lemma: Symbolic power one diagram invariant}~(2) yields the claim.
\end{proof}

The following computation revisits \Cref{ex: symb 1}, applying the  formula in  \Cref{prop: sigma'}~(3).

\begin{example}\label{ex: 2}

For $r=5$ and $\s=(3,2)$  we will determine the third generalized symbolic power of $I_\s$ using  \Cref{prop: sigma'}~(3)  with
\[
\sigma_{\leftarrow1}=\sigma,\qquad
\sigma_{\leftarrow2}=(2,1),\qquad
\sigma_{\leftarrow3}=(1),\qquad
\sigma_{\leftarrow t}=\varnothing\ \text{ for }t\geq4.
\]
The relevant Schur-function computations are
\begin{align*}
s_{(\sigma_{\leftarrow2})^\vee}^3=s_{(2,1)^\vee}^3
 &=4s_{(4,4,1)^\vee}+8s_{(4,3,2)^\vee}+9s_{(4,3,1,1)^\vee}
   +9s_{(4,2,2,1)^\vee}+6s_{(4,2,1,1,1)^\vee}\\
 &\quad+s_{(4,1,1,1,1,1)^\vee}+2s_{(3,3,3)^\vee}+8s_{(3,3,2,1)^\vee}
   +4s_{(3,3,1,1,1)^\vee}\\
 &\quad+4s_{(3,2,2,2)^\vee}+6s_{(3,2,2,1,1)^\vee}
   +2s_{(3,2,1,1,1,1)^\vee}\\
 &\quad+2s_{(2,2,2,2,1)^\vee}+s_{(2,2,2,1,1,1)^\vee},\\
s_{(\sigma_{\leftarrow3})^\vee}^3=s_{(1)^\vee}^3
 &=s_{(3)^\vee}+2s_{(2,1)^\vee}+s_{(1,1,1)^\vee}.
\end{align*}
Applying the shift operations ${-}_{\rightarrow2}$ and ${-}_{\rightarrow3}$ to
the minimal diagrams in these two expansions yields
\begin{align*}
\bigl(I_{(3,2)}\bigr)_{\Ass}^{(3)}=I_{(3,2)}^{(3)_2}
 &=I_{(5,5,2)}+I_{(5,4,3)}+I_{(5,4,2,2)}+I_{(5,3,3,2)}
   +I_{(5,3,2,2,2)}\\
 &\quad+I_{(5,2,2,2,2,2)}+I_{(4,4,4)}+I_{(4,4,3,2)}
   +I_{(4,4,2,2,2)}\\
 &\quad+I_{(4,3,3,3)}+I_{(4,3,3,2,2)}+I_{(4,3,2,2,2,2)}
   +I_{(3,3,3,3,2)}+I_{(3,3,3,2,2,2)},\\
\bigl(I_{(3,2)}\bigr)_{\Min}^{(3)}=I_{(3,2)}^{(3)_3}
 &=I_{(5)}+I_{(4,3)}+I_{(3,3,3)}.
\end{align*}
For $t=1$ the ideal $I_{(3,2)}^{(3)_1}=I_{(3,2)}^{3}$ was determined in \Cref{ex: symb 1} and for $t\geq4$ the ideals $I_{(3,2)}^{(3)_t}=R$ because $I_{(3,2)}\not\subseteq D_4$, equivalently  $\sigma_{\leftarrow t}=\varnothing$ for $t\geq4$.

In each of the sums above the partitions listed form antichains, so the corresponding ideals are irredundant summands. Moreover, the smaller sizes of the partitions $\sigma_{\leftarrow t}$ present a computational advantage over  \Cref{ex: symb 1}.
\end{example}

\section{Asymptotic Invariants of GL-Invariant Ideals}\label{s: asymptotic}
We are now ready to determine the asymptotic invariants of the symbolic powers of  $G$-invariant ideals. We will  first determine the skew Waldschmidt constants corresponding to the valuations $\gamma_j$ and next use \Cref{thm: DFMS} to compute the asymptotic resurgence.

The following observation is implicit in \cite[Corollary 3.16]{bdhmrational}. The similarity between
\Cref{thm: Rees_valuations I_Sigma} and \Cref{thm: Rees_valuations} reflects the fact that taking integral closure preserves the Rees valuations and the relationship $\overline{I_\Sigma}=\overline{D_\Sigma}$.

\begin{lemma}\label{thm: Rees_valuations I_Sigma}
Let $\Sigma\subset H_r$ be finite and nonempty, and let $I_\Sigma=\sum_{\sigma\in\Sigma}I_\sigma$. The  facets not contained in a coordinate hyperplane of
$\P_\Sigma=P_\Sigma+\RR_{\geq0}^r$  correspond to the Rees valuations of $I_\Sigma$. More precisely, if such a facet has supporting equation
$
\sum_{i=1}^r h_ix_i=c,
$
written with  $h=(h_1,\ldots,h_r)\in\ZZ_{\geq0}^r$ and $c\in\ZZ_{>0}$, then the corresponding Rees valuation is $\sum_{i=1}^rh_i\gamma_i$.
\end{lemma}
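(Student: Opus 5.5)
The plan is to reduce to \Cref{thm: Rees_valuations} via integral closures. The key general fact is that an ideal and its integral closure have the same Rees valuations. In \Cref{def: Rees_valuations}, condition (1) involves only the ideals $\overline{I^n}$, together with the extensions $I^nV_i$. Since $V_i$ is integrally closed and $I\subseteq \overline I\subseteq \overline{IV_i}=IV_i$, we get $\overline{I}^{\,n}V_i=I^nV_i$ for every $n$. Moreover $\overline{\overline{I}^{\,n}}=\overline{I^n}$, because $I^n\subseteq\overline I^{\,n}\subseteq\overline{I^n}$. Hence a set $\{V_1,\ldots,V_s\}$ satisfies condition (1) for $I$ if and only if it satisfies it for $\overline I$. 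Minimality and uniqueness then give that the Rees valuations of $I$ and $\overline I$ coincide.

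Next I apply this twice. By \eqref{eq: integral closure}, $\overline{I_\Sigma}=\overline{\sum_{\sigma\in\Sigma}D_\sigma}=\overline{D_\Sigma}$. Therefore the Rees valuations of $I_\Sigma$ equal those of $\overline{I_\Sigma}=\overline{D_\Sigma}$, which in turn equal those of $D_\Sigma$. Now \Cref{thm: Rees_valuations} describes the Rees valuations of $D_\Sigma$ in exactly the stated form. They are the valuations $v_F=\sum_i h_i\gamma_i$, one for each non-coordinate facet $F$ of $P_\Sigma+\RR^r_{\geq0}=\P_\Sigma$ with primitive inward normal $h\in\ZZ^r_{\geq0}$ and constant term $c>0$.

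It remains to reconcile the phrasing. The lemma speaks of facets "not contained in a coordinate hyperplane", while \Cref{thm: Rees_valuations} requires a supporting hyperplane with positive constant term. Since $P_\Sigma\subset\RR^r_{\geq 0}$, every facet of $\P_\Sigma$ has a nonnegative inward normal and constant term $c\geq 0$. If $c=0$, the facet lies in $\{h\cdot x=0\}\cap\RR^r_{\geq0}$. This set is a coordinate face, and for it to be a facet, $h$ must be a single coordinate vector, so the facet lies in a coordinate hyperplane. Conversely, a facet not lying in any coordinate hyperplane must therefore have $c>0$. Thus the two descriptions match, and the irredundancy statement is inherited from \Cref{thm: Rees_valuations}.

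The main point needing care is the first step, the invariance of Rees valuations under integral closure. I would either cite it as standard (Swanson--Huneke, Rees valuations chapter) or give the short argument above, using $\overline{I}V=IV$ for a valuation ring $V$. Everything else is bookkeeping.
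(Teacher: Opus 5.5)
Your proposal is correct and follows essentially the same route as the paper. Both arguments use that Rees valuations depend only on the integral closures of the powers, then the identity $\overline{I_\Sigma}=\overline{D_\Sigma}$ from De Concini--Eisenbud--Procesi, and then conclude with \Cref{thm: Rees_valuations}. You also spell out two points the paper leaves implicit: the argument via $\overline{I}V_i=IV_i$ that integral closure does not change the Rees valuations, and the check that the facets of $\P_\Sigma$ with positive constant term are exactly those not contained in a coordinate hyperplane.
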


\begin{proof}
In \cite[Theorem 8.2]{deconcini1980young} (see also \eqref{eq: integral closure}) formulas for the integral closures of sums of  isotypic ideals are given, including  $\overline{I_\Sigma}=\overline{D_\Sigma}$. Consequently it follows that $\overline{I_\Sigma^m}=\overline{D_\Sigma^m}$ for all $m\geq 1$. As the Rees valuations only depend on the integral closure of an ideal and its powers, this shows that the Rees valuations of $I_\Sigma$ and $D_\Sigma$ coincide. We conclude using  \Cref{thm: Rees_valuations}.
\end{proof}

We can now determine the Waldschmidt constant for  $G$-invariant ideals.

\begin{theorem}\label{theorem: waldschmidt constant one diagram}
Let $\Sigma\subset H_r$ be a set of Young diagrams. Then for any positive integer $p\leq r$ the skew
Waldschmidt constants for the family of generalized symbolic powers $I_\Sigma^{(\bullet)_p}$, denoted  $\widehat{\gamma_j}^p(I_\Sigma)$ are given by
\[
\widehat{\gamma_j}^p(I_\Sigma):=\lim_{m\to\infty}\frac{\gamma_j\left (I_\Sigma^{(m)_p}\right )}{m}=
\begin{cases}
\frac{(r-j+1) \ \gamma_{p}(\Sigma)}{r-p+1} & \text{ for } 1\leq j\leq p,\\
\gamma_j(\Sigma)& \text{ for } p< j\leq r.
\end{cases}
\]
In particular, the Waldschmidt constant is
\[
\widehat{\alpha}^p(I_\Sigma)=\frac{r \ \gamma_{p}(\Sigma)}{r-p+1}.
\]
\end{theorem}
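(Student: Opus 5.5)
The plan is to treat the ranges $j\geq p$ and $j<p$ separately, using the reduction of \Cref{prop: sigma'} to pass to ordinary powers in a smaller matrix. The Waldschmidt constant is then read off at $j=1$.

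\emph{The range $p\leq j\leq r$.} Apply \Cref{prop: sigma'}~(2) with $t=p$. This gives
\[
\gamma_j\bigl(I_\Sigma^{(m)_p}\bigr)=\gamma_{j-p+1}\bigl(I_{\Sigma_{\leftarrow p}}^{(m)_1}\bigr)=\gamma_{j-p+1}\bigl(I_{\Sigma_{\leftarrow p}}^{m}\bigr),
\]
computed for an $(r-p+1)\times(n-p+1)$ matrix. For ordinary powers and any valuation $v$ one has $v(J^m)=m\,v(J)$: the inequality $\geq$ follows from multiplicativity, and equality is attained by the $m$-th power of an element realizing $v(J)$. By \Cref{lemma: Symbolic power one diagram invariant}~(3) with $m=1$, $\gamma_{j-p+1}(I_{\Sigma_{\leftarrow p}})=\gamma_{j-p+1}(\Sigma_{\leftarrow p})$. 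Finally, $\gamma_{j-p+1}(\sigma_{\leftarrow p})=\gamma_j(\sigma)$ for $j\geq p$, since both count the same boxes. Hence $\gamma_j(I_\Sigma^{(m)_p})=m\gamma_j(\Sigma)$ exactly. This gives the second case, and also the value $\gamma_p(\Sigma)$ at $j=p$, which agrees with the first formula there.

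\emph{Lower bound for $j<p$.} By \Cref{lemma: Symbolic power one diagram invariant}~(3) it suffices to bound $\gamma_j(\lambda_{\geq p})$ for $\lambda\in\Lambda_m(\Sigma)$. Every row of $\mu=\lambda_{\geq p}$ has length at least $p$, so
\[
\gamma_j(\mu)=\gamma_p(\mu)+(p-j)\,\#\mathrm{rows}(\mu).
\]
Since $\mu\in H_r$, each row contributes at most $r-p+1$ to $\gamma_p(\mu)$, so $\#\mathrm{rows}(\mu)\geq \gamma_p(\mu)/(r-p+1)$. By the previous paragraph, $\gamma_p(\mu)\geq \gamma_p(I_\Sigma^{(m)_p})=m\gamma_p(\Sigma)$. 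Combining these,
\[
\gamma_j(\mu)\geq m\gamma_p(\Sigma)\Bigl(1+\tfrac{p-j}{r-p+1}\Bigr)=m\,\frac{(r-j+1)\gamma_p(\Sigma)}{r-p+1}.
\]

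\emph{Upper bound for $j<p$.} This is the main obstacle, because it requires near-rectangular diagrams to actually occur. Choose $\sigma\in\Sigma$ with $\gamma_p(\sigma)=\gamma_p(\Sigma)=:\ell$, and set $\tau=\sigma_{\leftarrow p}\in H_{r-p+1}$, so that $|\tau|=\ell$. We may assume $\ell>0$; otherwise both sides are $0$.

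The appendix nonvanishing theorem (\Cref{thm: rectangle corrected}), applied in $H_{r-p+1}$, gives that the rectangle with $\ell$ rows of length $r-p+1$ lies in $\Lambda_{r-p+1}(\tau)\subseteq\Lambda_{r-p+1}(\Sigma_{\leftarrow p})$. By \Cref{prop: sigma'}~(3) with $t=p$, the ideal $I_\Sigma^{(r-p+1)_p}$ contains $I_\rho$, where $\rho=(r^\ell)$ is the shifted rectangle. Then $\gamma_j(\rho)=\ell(r-j+1)$.

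By \Cref{lem: generalized symbolic powers graded}, $I_\rho^k\subseteq I_\Sigma^{(k(r-p+1))_p}$. It follows that
\[
\gamma_j\bigl(I_\Sigma^{(k(r-p+1))_p}\bigr)\leq k\,\ell(r-j+1).
\]
Dividing by $k(r-p+1)$ and using that the limit in \Cref{def:waldschmidt} is an infimum, so it may be computed along the subsequence $m=k(r-p+1)$, yields the matching upper bound.

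\emph{The Waldschmidt constant.} For $G$-invariant ideals the least degree equals the least size $|\lambda|=\gamma_1(\lambda)$ of a diagram in an irredundant decomposition. Indeed, $K_\lambda$ has degree $|\lambda|$, and $I_\lambda$ is generated in degree $|\lambda|$. Hence $\alpha=\gamma_1$ on these ideals, and setting $j=1$ in the first case gives $\widehat{\alpha}^p(I_\Sigma)=r\gamma_p(\Sigma)/(r-p+1)$.
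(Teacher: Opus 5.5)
Your proof is correct. The case $j\geq p$ and the upper bound for $j<p$ follow the paper's proof: you reduce via \Cref{prop: sigma'} to ordinary powers of $I_{\Sigma_{\leftarrow p}}$, and you use the rectangle $(r^\ell)$ from \Cref{thm: rectangle corrected} together with the graded-family property.

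The lower bound for $j<p$ is where your route genuinely differs. The paper works inside a Yamanouchi $m$-filtration of $\lambda\in\Lambda_m(\Sigma)$. It counts at least $m\gamma_p(\Sigma)$ entries $\geq p$ in the accumulated content. It then uses Whitehead's lemma (the first occurrence of $i$ in a Yamanouchi word lies in a column indexed $\geq i$) to place these entries weakly right of column $p$, and counts rows.

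You instead first get the exact value $\gamma_p(I_\Sigma^{(m)_p})=m\gamma_p(\Sigma)$ from \Cref{prop: sigma'}~(2). The paper invokes that statement only for $j>p$, but its stated range $p\leq j\leq r$ covers $j=p$. You then prove a purely numerical pointwise inequality for any $\mu\in H_r$ all of whose rows have length at least $p$:
$\gamma_j(\mu)=\gamma_p(\mu)+(p-j)\,\#\mathrm{rows}(\mu)\geq \frac{r-j+1}{r-p+1}\,\gamma_p(\mu)$.

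Your version is more economical. The lower bound needs no tableau bookkeeping and no ingredient beyond those the proof already uses for $j>p$. It also shows that equality forces every row of $\mu=\lambda_{\geq p}$ to have length exactly $r$, which explains why the rectangle from the appendix is the extremal configuration. The paper's argument, by contrast, keeps the lower bound intrinsic to the filtration combinatorics, independent of the localization isomorphism behind \Cref{prop: sigma'}. Your extra justification that $\alpha=\gamma_1$ on these ideals is a harmless addition the paper leaves implicit.
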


		\begin{proof}
			Using \Cref{lemma: Symbolic power one diagram invariant}~(3) we have $\gamma_j(I_\Sigma)=\min\{\gamma_j(\sigma): \sigma\in \Sigma\}=\gamma_j(\Sigma)$,  and moreover
				\[\gamma_j\left(I_\Sigma^{(m)_p}\right )=\min\left \{ \gamma_j(\l_{\geq p}) : \lambda \in \Lambda_m(\Sigma) \right \}.\]

				If $p < j \leq r$, then we use \Cref{prop: sigma'}~(2) to compute
\[
\gamma_j(I_\Sigma^{(m)_p})=\gamma_{j-p+1}(I_{\Sigma_{\leftarrow p}}^{(m)_1})=\gamma_{j-p+1}(I_{\Sigma_{\leftarrow p}}^{m})=m\gamma_{j-p+1}(I_{\Sigma_{\leftarrow p}})=m\gamma_j(I_\Sigma).
\]
concluding that $\widehat{\gamma_j}^p(I_\Sigma)=\gamma_j(I_\Sigma)=\gamma_j(\Sigma)$.

Assume now that $1\leq j\leq p$. We establish the desired equality by proving two inequalities.

\medskip
\noindent
\textbf{\em Upper bound.}
Fix a positive integer $j$, set $q=r-p+1$ and let $\sigma\in\Sigma$ be a diagram that satisfies $\gamma_p(\s)= \gamma_p(\Sigma)$. By \Cref{lemma: Symbolic power one diagram invariant}~(2) the $q$-th generalized symbolic power $I_\Sigma^{(q)_p}$ contains $I_{\l}$ for any  $\l\in \Lambda_q(\s)_{\geq p}$. We will show that there exists a diagram $\l\in \Lambda_q(\s)_{\geq p}$ so that $\gamma_j\left(\l\right)=(r-j+1)\gamma_p(\s)$.

To this end we first find a rectangular diagram $\l'\in \Lambda_q(\s_{\leftarrow p})$. Set $\ell=|\s_{\leftarrow p}|=\gamma_p(\s)$. By \eqref{eq: Lambda} we have
\[
\Lambda_q(\s_{\leftarrow p}) =\left\{\l\in H_{r-p+1} : c^{\l^\vee}_{\underbrace{\s_{\leftarrow p}^\vee,\s_{\leftarrow p}^\vee, \ldots, \s_{\leftarrow p}^\vee}_{q}} \neq 0\right\},
\]
but since $\s_{\leftarrow p}\in H_{r-p+1}=H_q$ \Cref{thm: rectangle corrected}
 gives that $\l'=(q^{\ell})\in \Lambda_q(\s_{\leftarrow p})$  and is a minimal element of $\Lambda_q(\s_{\leftarrow p})$ as can be seen by considering its total number of boxes.

 By \Cref{prop: sigma'}~(1)  with $t=p$ we have $\l:=\l_{\rightarrow p}'\in \Lambda_q(\s)_{\geq p}$, where $\l$  is the rectangular diagram $\l=(r,\ldots, r)=(r^\ell)$ with $\ell=\gamma_p(\s)$ rows of length $r$. It satisfies $\gamma_j\left(\l\right)=(r-j+1)\ell=(r-j+1)\gamma_p(\s)$ thus by  \Cref{lemma: Symbolic power one diagram invariant}~(2) we conclude
\[
\gamma_j(I_\Sigma^{(q)_p}) \le (r-j+1) \gamma_p(\s)=(r-j+1) \gamma_p(\Sigma)\text{ for all } 1\leq j\leq p.
\]
Since the generalized symbolic powers form a graded family by \Cref{lem: generalized symbolic powers graded},
$\left(I_\Sigma^{(q)_p}\right)^k\subseteq I_\Sigma^{(qk)_p}$
for all $k\geq 1$. Therefore
\[
\gamma_j \left(I_\Sigma^{(qk)_p}\right) \le \gamma_j \left(\left(I_\Sigma^{(q)_p}\right)^k\right)=k \gamma_j \left(I_\Sigma^{(q)_p}\right)\leq  k(r-j+1)\gamma_p(\Sigma),
\]
The limit defining $\widehat{\gamma_j}^p(I_\Sigma)$ exists by subadditivity and Fekete's lemma, and hence it agrees with the limit along the subsequence of multiples of $q=r-p+1$. We conclude
\begin{equation}\label{eq:upper}
\widehat{\gamma_j}^p(I_\Sigma)=\lim_{k\to\infty}\frac{\gamma_j(I_\Sigma^{(kq)_p})}{kq}\le \frac{(r-j+1) \gamma_p(\Sigma)}{q}
=\frac{(r-j+1)\gamma_p(\Sigma)}{r-p+1}.
\end{equation}

\medskip
\noindent
\textbf{\em Lower bound.}
Let $\lambda\in\Lambda_m(\Sigma)$ be a Young diagram.  Assign content to the  boxes of $\l$ as prescribed by the union of the Yamanouchi fillings of a Yamanouchi $m$-filtration as in \Cref{def: Yamanouchi filtration}. With respect  to this content of $\l$, at least $m\gamma_p(I_\Sigma)$ boxes have entries $\ge p$. By the Yamanouchi condition and \cite[Lemma~5.1]{WhiteheadThesis}, the first occurrence of an integer $i$ in a Yamanouchi word must appear at the bottom of some column indexed $k\ge i$. Hence all boxes whose entries are at least $p$ must lie  weakly to the right of column $p$.

Since each row  of $\l\in H_r$ has length at most $r$, there are at most $r-p+1$ boxes in each row weakly to the right of column $p$. At each filtration stage the number of entries at least $p$ is at least $\gamma_p(\Sigma)$ so the total number of such entries is at least $m\gamma_p(\Sigma)$. Therefore,
\begin{equation}\label{eq: rows}
\#\{\text{rows of }\lambda \text{ containing entries }\ge p\}
\ge \frac{m \gamma_p(\Sigma)}{r-p+1}.
\end{equation}

Each such row contains the $p-1$ boxes in columns $1,\ldots,p-1$, and hence has total length at least $p$. Consequently, all these rows remain present in $\lambda_{\geq p}$. Since $j\leq p$, the boxes in columns $j,\ldots,p-1$ contribute at least $p-j$ additional boxes in each such row to $\gamma_j(\lambda_{\geq p})$. These boxes are disjoint from the boxes with entries at least $p$, all of which lie weakly to the right of column $p$. Therefore
\[
\gamma_j\left(\l_{\geq p}\right )
\ge (p-j)\cdot\frac{m\gamma_p(\Sigma)}{r-p+1} +m\gamma_p(\Sigma)= \frac{(r-j+1)m\gamma_p(\Sigma)}{r-p+1}.
\]
Applying \Cref{lemma: Symbolic power one diagram invariant}~(3), dividing by $m$ and letting $m\to\infty$ yields
\begin{equation}\label{eq:lower}
\widehat{\gamma_j}^{\,p}\left(I_\Sigma\right )\ge \frac{(r-j+1)\gamma_p(\Sigma)}{r-p+1}.
\end{equation}

Combining \eqref{eq:upper} and \eqref{eq:lower} completes the proof.
	\end{proof}

Utilizing the skew Waldschmidt constants computed above we determine the asymptotic resurgence of a $G$-invariant ideal.

\begin{theorem}\label{theorem: resurgence}
Let $\Sigma\subset H_r$ be a set of Young diagrams and let $p\leq r$ be a positive integer such that $\gamma_p(\Sigma)>0$.
Assume every Rees valuation of $I_\Sigma$ is one of $\gamma_1,\ldots,\gamma_r$. Then the asymptotic resurgence for the graded family $I_\Sigma^{(\bullet)_p}$ relative to $I_\Sigma^\bullet$ is
\[
\widehat{\rho}(I_\Sigma^{(\bullet)_p}, I_\Sigma^\bullet)=\frac{\alpha(I_\Sigma)}{\widehat{\alpha}^p(I_\Sigma)}=\frac{(r-p+1)\gamma_1(\Sigma)}{r\gamma_p(\Sigma)}.
\]
\end{theorem}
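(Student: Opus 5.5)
The plan is to apply \Cref{thm: DFMS} with $\a_\bullet=I_\Sigma^{(\bullet)_p}$, which is a filtration of nonzero ideals by \Cref{lem: generalized symbolic powers graded}. This gives
\[
\widehat{\rho}(I_\Sigma^{(\bullet)_p},I_\Sigma^\bullet)=\max_{v}\frac{v(I_\Sigma)}{\widehat v(I_\Sigma^{(\bullet)_p})},
\]
where $v$ ranges over the Rees valuations of $I_\Sigma$. By hypothesis each such $v$ equals some $\gamma_j$. Their values on $I_\Sigma$ are $\gamma_j(\Sigma)$ by \Cref{lemma: Symbolic power one diagram invariant}~(3) with $m=1$, $t=1$. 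The skew Waldschmidt constants $\widehat{\gamma_j}^p(I_\Sigma)$ are given by \Cref{theorem: waldschmidt constant one diagram}. The denominators are positive because $\gamma_p(\Sigma)>0$ and $\gamma_j(\Sigma)\ge\gamma_p(\Sigma)$ for $j\le p$. So the whole proof reduces to bounding the ratios $\gamma_j(\Sigma)/\widehat{\gamma_j}^p(I_\Sigma)$ over all $j$ and showing that $j=1$ attains the maximum.

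\textbf{The case $j>p$.} Here the ratio is $\gamma_j(\Sigma)/\gamma_j(\Sigma)=1$ whenever $\gamma_j(\Sigma)>0$. This is at most the target value, since the target is the $j=1$ ratio and we will show it is $\ge1$. If $\gamma_j(\Sigma)=0$, then $\gamma_j$ cannot be a Rees valuation: a Rees valuation is positive on $I_\Sigma$, as $\gamma_j$ corresponds to a non-coordinate facet with positive constant term by \Cref{thm: Rees_valuations I_Sigma}. So these $j$ are excluded.

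\textbf{The case $j\le p$.} Here the ratio is
\[
\frac{(r-p+1)\gamma_j(\Sigma)}{(r-j+1)\gamma_p(\Sigma)}.
\]
We must show $\gamma_j(\Sigma)/(r-j+1)\le\gamma_1(\Sigma)/r$, that is, $\gamma_j(\Sigma)/(r-j+1)$ is maximized at $j=1$. Take $\sigma\in\Sigma$ with $|\sigma|=\gamma_1(\Sigma)$. Then $\gamma_j(\Sigma)\le\gamma_j(\sigma)$. Each row $\sigma_i\le r$ satisfies $\max\{0,\sigma_i-j+1\}\le \frac{r-j+1}{r}\sigma_i$, because $\frac{\sigma_i-j+1}{\sigma_i}\le\frac{r-j+1}{r}$ whenever $\sigma_i\le r$. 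Summing over rows gives $\gamma_j(\sigma)\le\frac{r-j+1}{r}\gamma_1(\Sigma)$, as needed.

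The ratio at $j=1$ is $\frac{(r-p+1)\gamma_1(\Sigma)}{r\gamma_p(\Sigma)}$. The inequality just proved, with $j=p$, shows this is $\ge1$, which settles the comparison in the first case.

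\textbf{Attaining the maximum.} The remaining subtle point is whether $\gamma_1$ is itself a Rees valuation; otherwise the maximum might fall short of the $j=1$ ratio. I expect this to be the main obstacle. I would argue that the facet of $\P_\Sigma$ given by $x_1\ge\gamma_1(\Sigma)$, or more generally the minimizing face, contributes $\gamma_1$. If $\gamma_1$ is not among the Rees valuations, I would instead pick the Rees valuation $\gamma_j$ minimizing over the face containing the vertex $\gamma(\sigma)$ and show its ratio still equals the $j=1$ value. This works because $\gamma_j(\Sigma)=\gamma_j(\sigma)$ at the vertex with $|\sigma|$ minimal, combined with the row-wise inequality above being tight only when rows have length $r$.

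Finally, the identity $\frac{\alpha(I_\Sigma)}{\widehat\alpha^p(I_\Sigma)}$ follows because $\alpha=\gamma_1$ on these ideals: a minor of size $k$ has degree $k$, so $\alpha(I_\Sigma)=\gamma_1(\Sigma)$. Also $\widehat\alpha^p(I_\Sigma)=\widehat{\gamma_1}^p(I_\Sigma)$ by \Cref{theorem: waldschmidt constant one diagram}.
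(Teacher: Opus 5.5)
Your outline is the paper's proof. You apply \Cref{thm: DFMS} to the filtration $I_\Sigma^{(\bullet)_p}$ and read off the ratios $\gamma_j(\Sigma)/\widehat{\gamma_j}^p(I_\Sigma)$ from \Cref{theorem: waldschmidt constant one diagram}. You bound the ratios with $j\le p$ by the row-wise inequality $\gamma_j(\sigma)/(r-j+1)\le|\sigma|/r$, and you bound the ratios with $j>p$, which equal $1$, by the $j=1$ ratio. You show the $j=1$ ratio is at least $1$ via the $j=p$ case of the row-wise inequality, while the paper uses $I_\Sigma^m\subseteq I_\Sigma^{(m)_p}$ to get $\widehat\alpha^p(I_\Sigma)\le\alpha(I_\Sigma)$. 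That difference is harmless.

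The one real gap is the step you flag yourself. The upper bound only gives $\widehat\rho\le$ the $j=1$ ratio, and equality needs $\gamma_1$ to actually be a Rees valuation of $I_\Sigma$. Your fallback does not work. If $\gamma_1$ were missing, the other candidates would in general give strictly smaller ratios, precisely because the row-wise inequality is strict unless all rows have length $r$. For example, take $r=3$, $p=2$, $\Sigma=\{(2)\}$: the $j=1$ ratio is $4/3$, the $j=2$ ratio is $1$, and $\gamma_3$ is not a Rees valuation.

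No fallback is needed, though. The recession cone of $P_\Sigma+\RR^r_{\geq0}$ is $\RR^r_{\geq0}$. So for a $\sigma\in\Sigma$ with $|\sigma|=\gamma_1(\Sigma)$, the face on which $x_1$ attains its minimum $\gamma_1(\Sigma)$ contains $\gamma(\sigma)+\RR_{\geq0}e_2+\cdots+\RR_{\geq0}e_r$, where $e_i$ are the standard basis vectors. Hence that face is $(r-1)$-dimensional, i.e.\ a facet. Its primitive inward normal is $e_1$ and its constant term is $\gamma_1(\Sigma)\ge\gamma_p(\Sigma)>0$. By \Cref{thm: Rees_valuations I_Sigma}, $\gamma_1$ is therefore a Rees valuation, so the maximum in \Cref{thm: DFMS} is attained at $j=1$. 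This is exactly the facet assertion the paper's proof makes at this point.
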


\begin{proof}
We first show that
\begin{equation}\label{eq: gamma_less_than_alpha}
\frac{\gamma_j(I_\Sigma)}{\widehat{\gamma_j}^p(I_\Sigma)}\leq \frac{\alpha(I_\Sigma)}{\widehat{\alpha}^{p}(I_\Sigma)}
\quad \text{for every $j$}.
\end{equation}

If $1\leq j \leq p$ it follows from \Cref{theorem: waldschmidt constant one diagram} that \eqref{eq: gamma_less_than_alpha} is equivalent to
\[
\frac{(r-p+1)\gamma_j(\Sigma)}{(r-j+1)\gamma_{p}(\Sigma)} \leq \frac{(r-p+1)\gamma_1(\Sigma)}{r\gamma_{p}(\Sigma)}
\iff
\frac{\gamma_j(\Sigma)}{r-j+1}\leq\frac{\gamma_1(\Sigma)}{r}.
\]
Indeed, for any $\sigma=(\sigma_1,\dots,\sigma_s)\in\Sigma$  we have
\[
\frac{\gamma_j(\sigma)}{r-j+1}=\frac{\sum_{i}\max\{0,\s_i-j+1\}}{r-j+1}\le \frac{\sum_{i}\s_i }{r}=\frac{\gamma_1(\sigma)}{r}.
\]
Taking the minimum over $\sigma\in\Sigma$ yields the desired inequality
\[
\frac{\gamma_j(\Sigma)}{r-j+1}\le \frac{\gamma_1(\Sigma)}{r}.
\]

If $p < j \leq r$ and $\gamma_j$ is a Rees valuation of $I_\Sigma$, then $\gamma_j(I_\Sigma)>0$ and \Cref{theorem: waldschmidt constant one diagram} gives
\[
\frac{\gamma_j(I_\Sigma)}{\widehat{\gamma_j}^p(I_\Sigma)}=1.
\]
On the other hand, since $I_\Sigma^{m}\subseteq I_\Sigma^{(m)_p}$, we have $\alpha(I_\Sigma^{(m)_p})\leq \alpha(I_\Sigma^m)=m\alpha(I_\Sigma)$ for all $m\geq 1$. Consequently, $\widehat{\alpha}^p(I_\Sigma)\leq \alpha(I_\Sigma)$ and
$\alpha(I_\Sigma)/\widehat{\alpha}^p(I_\Sigma)\geq 1$. This establishes \eqref{eq: gamma_less_than_alpha} for every coordinate  valuation with $j>p$.

For the Rees valuation $\gamma_1$  of $I_\Sigma$ we have
\[
\frac{\gamma_1(I_\Sigma)}{\widehat{\gamma_1}^{\,p}(I_\Sigma)}
=\frac{\alpha(I_\Sigma)}{\widehat{\alpha}^{p}(I_\Sigma)}.
\]
The equation $x_1=\gamma_1(\Sigma)>0$ defines a  facet of $P_\Sigma+\mathbb R_{\geq0}^r$ not contained in a coordinate hyperplane. Thus the maximum in \Cref{thm: DFMS} is bounded above by $\alpha(I_\Sigma)/\widehat{\alpha}^{p}(I_\Sigma)$ by \eqref{eq: gamma_less_than_alpha}, and equality is attained by $\gamma_1$. This proves the claimed value of
$\widehat{\rho}(I_\Sigma^{(\bullet)_p},I_\Sigma^\bullet)$ with input from \Cref{theorem: waldschmidt constant one diagram}.
\end{proof}

\begin{remark}
The formulas in \Cref{theorem: waldschmidt constant one diagram} and \Cref{theorem: resurgence} can be specialized to the families $(I_\Sigma)_{\Ass}^{(m)}$ and $(I_\Sigma)_{\Min}^{(m)}$ by setting $p=\min\{\s_i : \s\in \Sigma\}$ and $p=\min\{\s_1 : \s\in \Sigma\}$, respectively, in view of \Cref{lem: symbolic_powers_maxass} and \Cref{lemma: symbolic power for sum of ideal invariant}.
\end{remark}

\begin{example}\label{ex: mixed Rees valuation counterexample}

The formula in \Cref{theorem: resurgence} need not hold for an arbitrary set of diagrams. Let
\[
r=3,\qquad p=2,\qquad
\Sigma=\{(3,3),(2,1^8)\}.
\]
The corresponding valuation vectors are
$\gamma(3,3)=(6,4,2)$,
$\gamma(2,1^8)=(10,1,0)$.
Thus $\alpha(I_\Sigma)=\gamma_1(\Sigma)=6$ and
$\gamma_2(\Sigma)=1$, so \Cref{theorem: waldschmidt constant one diagram} gives
\[
\frac{\alpha(I_\Sigma)}{\widehat{\alpha}^2(I_\Sigma)}
=\frac{2\gamma_1(\Sigma)}{3\gamma_2(\Sigma)}=4.
\]

The facets of $P_\Sigma+\RR_{\geq0}^3$ are the hyperplanes $x_1=6, x_2=1,x_3=0, 3x_1+4x_2=34, x_1+2x_3=10$.
Hence, by
\Cref{thm: Rees_valuations I_Sigma},
$
v=\gamma_1+2\gamma_3
$
is a Rees valuation of $I_\Sigma$, and $v(I_\Sigma)=10$.

On the other hand, since
\[
\Sigma_{\leftarrow2}=\{(2,2),(1)\},
\qquad
I_{\Sigma_{\leftarrow2}}=I_{(1)},
\]
is the homogeneous maximal ideal. By \Cref{prop: sigma'}~(3) with $t=2$ we conclude that  $I_\Sigma^{(m)_2}=\sum_{\lambda\in H_2, \, |\lambda|=m} I_{\l_{\rightarrow 2}}$ for all $m\geq 1$. If $\l=(1^m)$ then $ \l_{\rightarrow 2}=(2^m)$ and $v(\l_{\rightarrow 2})=2m$ and in general if $|\l|=m$ then $v(\l_{\rightarrow 2})=2|\l|+\gamma_2(\l)\geq 2m$, so
\[
v\bigl(I_\Sigma^{(m)_2}\bigr)= 2m \qquad\text{and hence}\qquad
\widehat v(I_\Sigma^{(\bullet)_2})= 2.
\]

Consequently, \Cref{thm: DFMS} gives
\[
\widehat{\rho}
 \bigl(I_\Sigma^{(\bullet)_2},I_\Sigma^\bullet\bigr)
\geq
\frac{v(I_\Sigma)}{\widehat v(I_\Sigma^{(\bullet)_2})}
= \frac{10}{2}=5>4
=\frac{\alpha(I_\Sigma)}{\widehat{\alpha}^2(I_\Sigma)}.
\]
Thus mixed Rees valuations can yield a larger ratio than the degree valuation.

\end{example}

As a corollary of \Cref{theorem: resurgence} we are able to infer asymptotic invariants for isotypic ideals $I_\s$ and products of determinantal ideals. Recall that as in \eqref{eq: Dsigma} we set $D_\sigma = D_{\sigma_1}D_{\sigma_2}\cdots D_{\sigma_s}$.

\begin{theorem}\label{thm: Ds}
Let $\s \in H_r$ be a diagram with $s$ rows and set
\[
\Lambda= \left\{ \l \in H_r : c_{(\s_1)^\vee,(\s_2)^\vee,\cdots ,(\s_s)^\vee}^{\l^\vee}\neq 0\right \}.
\]
Then

\begin{enumerate}
	\item $D_\sigma=\sum_{\tau \in \Lambda} I_\tau$,
	\item $\Min(D_\sigma)=\{D_{\s_1} \}$ and $\maxass(D_\sigma)=\{D_{t_0} \}$, where $t_0=\max\{1, r- \sum_{i=1}^s (r-\sigma_i)\}$,
	\item for any positive integer $1\leq p\leq \s_1$ the skew
Waldschmidt constants for the families of generalized symbolic powers $D_\s^{(\bullet)_p}$ are given by
\[
\widehat{\gamma_j}^p(D_\s):=\lim_{m\to\infty}\frac{\gamma_j\left (D_\s^{(m)_p}\right )}{m}=
\begin{cases}
\frac{(r-j+1) \ \gamma_{p}(\s)}{r-p+1} & \text{ for } 1\leq j\leq p\\
\gamma_j(\s)& \text{ for } p< j\leq r.
\end{cases}
\]
The resurgence and asymptotic resurgence for   $D_\s^{(\bullet)_p}$ and the asymptotic resurgence for  $I_\s^{(\bullet)_p}$, respectively, relative to the corresponding ordinary powers is
\begin{eqnarray*}
{\rho}(D_\s^{(\bullet)_p}, D_\s^\bullet)=\widehat{\rho}(D_\s^{(\bullet)_p}, D_\s^\bullet)=\frac{\alpha(D_\s)}{\widehat{\alpha}^p(D_\s)}=\frac{(r-p+1)|\s|}{r\gamma_p(\s)}, \\
\widehat{\rho}(I_\s^{(\bullet)_p}, I_\s^\bullet)=\frac{\alpha(I_\s)}{\widehat{\alpha}^p(I_\s)}=\frac{(r-p+1)|\s|}{r\gamma_p(\s)}.
\end{eqnarray*}
If $\s_1<p\leq r$, then $I_\s^{(m)_p}=D_\s^{(m)_p}=R$ for every $m\geq1$, and
\[
\rho(I_\s^{(\bullet)_p}, I_\s^\bullet)=\widehat{\rho}(I_\s^{(\bullet)_p}, I_\s^\bullet)=\rho(D_\s^{(\bullet)_p},D_\s^\bullet)
=\widehat{\rho}(D_\s^{(\bullet)_p},D_\s^\bullet)=\infty.
\]
\end{enumerate}
	\end{theorem}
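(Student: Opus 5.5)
Part (1) comes from iterating Whitehead's product formula, exactly as in \Cref{lemma: Symbolic power one diagram invariant}~(1). Since $D_t=I_{(t)}$, we have $D_\sigma=I_{(\sigma_1)}\cdots I_{(\sigma_s)}$. I would induct on $s$, using \Cref{theorem: product whitehead} at each step. The set of diagrams produced by the iterated products is then identified with $\Lambda$ by the argument of \Cref{lemma Lambda=littlewood rich}: a filtration whose successive skew shapes admit Yamanouchi fillings of content $(\sigma_i)^\vee$ exists exactly when the iterated coefficient is nonzero. Intermediate diagrams with more than $r$ columns can be discarded, because $c^{\lambda^\vee}$ vanishes unless the intermediate shapes are contained in $\lambda$.

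For part (2), the minimal prime is read off from \Cref{lemma: symbolic power for sum of ideal invariant}. Every $\tau\in\Lambda$ contains the single row $(\sigma_1)$, because the first factor is $(\sigma_1)^\vee$; hence $\tau_1\ge\sigma_1$. The diagram $\tau=\sigma$ itself lies in $\Lambda$, so the minimum of $\tau_1$ is $\sigma_1$. For the maximal associated prime I would compute $\min\{\tau_i:\tau\in\Lambda\}$ using \Cref{lemma: symbolic power for sum of ideal invariant}. In dual language, $\tau^\vee$ ranges over the constituents of $s_{1^{\sigma_1}}\cdots s_{1^{\sigma_s}}$ with at most $r$ rows. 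The shortest row of $\tau$ equals the number of rows of $\tau^\vee$ of full length $\ell=\ell(\tau^\vee)$, and this must be minimized. By the Pieri rule, each column of length $\sigma_i$ adds at most one box per row. A counting argument then says the smallest possible length of the last column of $\tau^\vee$ is $\max\{1,\,r-\sum_i(r-\sigma_i)\}$. Concretely, fill rows $1,\dots,r$ greedily so that each new column avoids row $r$ as long as possible; the excess forced into row $r$ is $\sum\sigma_i-(s-1)r$. The alternative is to cite \cite{deconcini1980young}, where $\Ass(D_\sigma)$ is computed directly. I expect this step to be the main combinatorial obstacle, and I would try citing De Concini--Eisenbud--Procesi first.

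For part (3), the skew Waldschmidt constants for $D_\sigma$ follow from \Cref{theorem: waldschmidt constant one diagram} applied to the set $\Lambda$, provided $\gamma_j(\Lambda)=\gamma_j(\sigma)$ for all $j$. This holds because $\gamma_j(\Lambda)=\gamma_j(D_\sigma)=\gamma_j(D_{\sigma_1})+\cdots+\gamma_j(D_{\sigma_s})=\gamma_j(\sigma)$, using that $\gamma_j$ is a valuation. The asymptotic resurgence then comes from \Cref{theorem: resurgence}; the hypothesis $p\le\sigma_1$ gives $\gamma_p(\sigma)>0$. The Rees valuations hypothesis holds because $\overline{D_\sigma}=\overline{I_\sigma}$ and $P_{\{\sigma\}}+\RR^r_{\ge0}$ is an orthant translate. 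Its only non-coordinate facets are therefore $x_j=\gamma_j(\sigma)>0$, and \Cref{thm: Rees_valuations I_Sigma} gives Rees valuations among the $\gamma_j$. The same argument with $\Sigma=\{\sigma\}$ gives the formula for $I_\sigma$, since $\gamma_1(\sigma)=|\sigma|$.

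For the equality $\rho=\widehat\rho$ for $D_\sigma$, I would use \Cref{thm: DFMS}, which gives $\rho(\mathfrak a_\bullet,\overline{D_\sigma^\bullet})=\widehat\rho$. It then suffices that the powers $D_\sigma^m=D_{\sigma^m}$, with $\sigma^m$ denoting the concatenation, are integrally closed, which is \cite{deconcini1980young} in characteristic zero. With this, $\rho(D_\sigma^{(\bullet)_p},D_\sigma^\bullet)$ coincides with the integral-closure resurgence.

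Finally, if $\sigma_1<p$, then $I_\sigma\not\subseteq D_p$, and likewise $D_\sigma\not\subseteq D_p$. Hence both symbolic power families equal $R$, and then $R\not\subseteq I^b$ for every $b$, so all the resurgences are infinite.
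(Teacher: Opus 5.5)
Your proposal is correct. Parts (1) and (3) follow the paper's argument. The only cosmetic difference in (3) is that you get $\gamma_j(\Lambda)=\gamma_j(\sigma)$ from additivity, $\gamma_j(D_{\sigma_1}\cdots D_{\sigma_s})=\sum_i\gamma_j(D_{\sigma_i})$, while the paper uses $D_\sigma=\overline{I_\sigma}$ and the invariance of valuations under integral closure.

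The real divergence is in part (2). The paper imports De Concini--Eisenbud--Procesi's explicit description of $\Ass(D_\sigma)$ (their Theorem 7.2) and shows $D_{\sigma_s}\in\Ass(D_\sigma)$. It then rewrites their criterion $\gamma_{j+1}(\sigma)\le(r-j)(\sigma^\vee_j-1)$, for $j\le\sigma_s$, as $|\sigma|-r(s-1)\le j$; that theorem gives all of $\Ass(D_\sigma)$, not just its extremes. Your main route instead applies \Cref{lemma: symbolic power for sum of ideal invariant} to $I_\Lambda=D_\sigma$. Here $\Lambda$ is automatically an antichain, since all its members have $|\sigma|$ boxes. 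This reduces everything to an extremal problem on partitions. By Kostka nonvanishing, $\Lambda$ consists of the $\tau\in H_r$ with $|\tau|=|\sigma|$ that dominate $\sigma$. Hence $\tau_1\ge\sigma_1$, with equality at $\tau=\sigma$. Also $\tau$ has at most $s$ rows, so its last row has length at least $|\sigma|-(s-1)r$. This stays inside the paper's own machinery and avoids the DEP criterion, at the cost of producing only $\Min$ and $\maxass$, which is all the statement needs.

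Two repairs are needed. First, your greedy description is garbled. The full row length of $\tau^\vee$ is $\tau^\vee_1=\ell(\tau)$, not $\ell(\tau^\vee)$. The excess is forced into the last row of $\tau$ (the last column of $\tau^\vee$), not into row $r$. So give the attaining diagram explicitly. If $|\sigma|>(s-1)r$, take $\tau=(r^{s-1},|\sigma|-(s-1)r)$. Otherwise, move $\sigma_s-1$ boxes from the last row of $\sigma$ into the topmost rows of length less than $r$. This is possible because $\sum_{i<s}(r-\sigma_i)\ge\sigma_s$, and it produces a diagram in $\Lambda$ whose last row has length $1$. Second, if you fall back on citing De Concini--Eisenbud--Procesi, you still owe the paper's short computation translating their criterion into the formula for $t_0$.
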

	\begin{proof}

	Assertion (1) follows by iterating \Cref{theorem: product whitehead} to obtain \[
D_\sigma=\prod_{i=1}^s D_{\sigma_i} =\prod_{i=1}^s I_{(\sigma_i)}=\sum_{\tau \in \Lambda} I_{\tau}.
\]

For (2)  set  $t_0=\min\{j: D_j\in \Ass(D_\s)\}$, $t_1=\max\{j: D_j\in \Ass(D_\s)\}$.

 In view of \cite[Theorem 7.2]{deconcini1980young} one has
\begin{equation}\label{eq: ass Ds}
\Ass(D_\s)=\left \{D_j: \s^\vee_j\neq 0 \text{ and } \gamma_{j+1}(\s)\leq (r-j)(\s^\vee_j-1)\right\}.
\end{equation}
The claim follows by
rewriting the condition $\gamma_{j+1}(\s)\leq (r-j)(\s^\vee_j-1)$ in terms of $\sum_i\sigma_i$ as detailed below. First we show that $t_0 \leqslant \sigma_s$.  Since $\s^\vee_{\s_s}=s$ and $\sigma_i\leq r$ for each $i$ we have
\[
\gamma_{\sigma_s+1}(\sigma)=\sum_{i=1}^s \max \{0, \sigma_i -(\sigma_s+1)+1\}=\sum_{i=1}^s(\sigma_i-\sigma_s) \leqslant (r-\sigma_s)(s-1)=(r-\sigma_s)(\s^\vee_{\s_s}-1).
\]
Thus  $D_{\sigma_s} \in \Ass(D_{\sigma})$ and hence $t_0 \leqslant \sigma_s$.
		Now for any $j \leq \sigma_s$, we have $\s^\vee_{j}=s$ and thus by \eqref{eq: ass Ds}
		\begin{align*}
			D_j \in \Ass(D_\sigma) &\iff \gamma_{j+1}(\sigma) \leqslant (r-j)(\s^\vee_j-1)\\
			& \iff \gamma_{j+1}(\sigma) \leqslant (r-j)(s-1), \qquad \text{since } \sigma_i \geqslant \sigma_s \geq j \\
			& \iff  \sum_{i=1}^{s}\max\{0, \sigma_i-(j+1)+1 \}\leqslant (r-j)(s-1)\\
			& \iff \sum_{i=1}^{s}(\sigma_i-j) \leqslant (r-j)(s-1)\\
			&\iff \sum_{i=1}^s\sigma_i -js \leqslant (r-j)(s-1)\\
			&\iff \sum_{i=1}^s\sigma_i  \leqslant r(s-1)+j\\
			&\iff \sum_{i=1}^s\sigma_i - r(s-1) \leqslant j.
\end{align*}
It follows that $t_0=\max\{1, r- \sum_{i=1}^s (r-\sigma_i)\}$.
 Since $\s^\vee_j=0$ for $j>\s_1$, \eqref{eq: ass Ds} gives  $t_1\leq \s_1$. Moreover, since $\gamma_{s_1+1}(\s)=0\leq (r-\s_1)(\s^\vee_{s_1}-1)$, it follows that $D_{\s_1}\in \Ass(D_\s)$ and thus $t_1=\s_1$.

For (3), recall from part~(1) that $D_\sigma=I_\Lambda$. Since
$
D_\sigma=\overline{I_\sigma},
$
taking integral closure does not change the value of an ideal under a valuation. Consequently, for every $1\leq j\leq r$,
\[
\gamma_j(\Lambda)
=\gamma_j(I_\Lambda)
=\gamma_j(D_\sigma)
=\gamma_j(I_\sigma)
=\gamma_j(\sigma).
\]
Therefore \Cref{theorem: waldschmidt constant one diagram}, applied to $I_\Lambda=D_\sigma$, gives the asserted skew Waldschmidt constants.

It remains to verify the additional hypothesis in \Cref{theorem: resurgence}. Since $D_\sigma=\overline{I_\sigma}$, the ideals $D_\sigma$ and $I_\sigma$ have the same Rees valuations. The polyhedron associated to the singleton $\{\sigma\}$ is
\[
P_{\{\sigma\}}+\RR_{\geq0}^r
=\gamma(\sigma)+\RR_{\geq0}^r.
\]
Its supporting facets with positive constant are coordinate facets, and hence the Rees valuations of $D_\sigma$ are the coordinate valuations $\gamma_j$. Thus \Cref{theorem: resurgence} applies and gives
\[
\widehat{\rho}(D_\sigma^{(\bullet)_p},D_\sigma^\bullet)
=\frac{\alpha(D_\sigma)}{\widehat{\alpha}^p(D_\sigma)}
=\frac{(r-p+1)|\sigma|}{r\gamma_p(\sigma)}
\quad \text{and} \quad
\widehat{\rho}(I_\sigma^{(\bullet)_p},I_\sigma^\bullet)
=\frac{\alpha(I_\sigma)}{\widehat{\alpha}^p(I_\sigma)}
=\frac{(r-p+1)|\sigma|}{r\gamma_p(\sigma)}.
\]

Finally, every power of $D_\sigma$ is integrally closed. Indeed, if $\sigma^{\sqcup m}$ denotes the diagram obtained by repeating every part of $\sigma$ exactly $m$ times, then
$
D_\sigma^m=D_{\sigma^{\sqcup m}}
=\overline{I_{\sigma^{\sqcup m}}}$.
It follows that $D_\sigma^\bullet=\overline{D_\sigma^\bullet}$. Therefore \Cref{thm: DFMS} yields
\[
\rho(D_\sigma^{(\bullet)_p},D_\sigma^\bullet)
=\widehat{\rho}(D_\sigma^{(\bullet)_p},D_\sigma^\bullet),
\]
for $1\leq p\leq\sigma_1$.

if instead $\sigma_1<p\leq r$, then $D_\sigma\not\subseteq D_p$, and hence
\[
D_\sigma^{(m)_p}=D_\sigma^mR_{D_p}\cap R=R
\qquad\text{for every }m\geq1.
\]
Since $D_\sigma^v$ is a proper ideal for every $v\geq1$, the defining noncontainments occur for arbitrarily large ratios, so both the resurgence and the asymptotic resurgence are $+\infty$. One argues similarly for $I_\s$.
		\end{proof}

\section{Generalized Symbolic Rees Algebras}\label{s: symbolic Rees}

For a $G$-invariant ideal $I\subseteq R$ and an integer $1\leq t\leq r$, we define its \emph{generalized symbolic Rees algebra with respect to $D_t$} by
\[
\mathcal R_s^t(I)
   :=\bigoplus_{m\geq0} I^{(m)_t}T^m\subseteq R[T],
\qquad I^{(0)_t}:=R.
\]
This is an $R$-algebra because the family $I^{(\bullet)_t}$ is graded by \Cref{lem: generalized symbolic powers graded}. We now prove that these algebras are Noetherian for every $G$-invariant ideal considered in this paper.

\begin{theorem}\label{thm: symbolic Rees Sigma}
Let $\Sigma\subset H_r$ be a finite nonempty set of Young diagrams and let $1\leq t\leq r$. Then the generalized symbolic Rees algebra
\[
\mathcal R_s^t(I_\Sigma)=\bigoplus_{m\geq0}I_\Sigma^{(m)_t}T^m
\]
is  finitely generated, and hence a Noetherian, as an $R$-algebra.
\end{theorem}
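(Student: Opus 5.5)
Since $I_\Sigma^{(m)_t}$ is $G$-stable for every $m$, the algebra $\mathcal R=\mathcal R_s^t(I_\Sigma)$ is a graded $G$-stable subalgebra of $R[T]$, with $G$ acting trivially on $T$. I will prove finite generation by passing to invariants of a maximal unipotent subgroup. Let $U=U_r\times U_n\subseteq G$ be a product of maximal unipotent subgroups. By Grosshans' theorem (the form asserting that a rational $G$-algebra $A$ is finitely generated if and only if $A^U$ is), it suffices to show that $\mathcal R^U$ is a finitely generated $\kk$-algebra.

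By \Cref{rem: G-invariant}~(1) and the Cauchy decomposition, each $M_\tau$ is an irreducible $G$-module. Hence $M_\tau^U$ is one-dimensional, spanned by a highest weight vector $h_\tau$; up to scalar, $h_\tau=K_\tau$, the product of leading principal minors. Combining \Cref{lemma: Symbolic power one diagram invariant}~(2) with $I_\mu=\bigoplus_{\tau\supseteq\mu}M_\tau$, we get
\[
\mathcal R^U=\bigoplus_{m\geq0}\ \bigoplus_{\tau\in S_m}\kk\, K_\tau T^m,\qquad S_m=\{\tau\in H_r:\ \tau\supseteq\lambda_{\geq t}\text{ for some }\lambda\in\Lambda_m(\Sigma)\}.
\]
Since $K_\tau K_{\tau'}=K_{\tau+\tau'}$, where $\tau+\tau'$ is the diagram whose column heights are added, $\mathcal R^U$ is the semigroup ring of
\[
\mathcal S=\{(\tau,m):\tau\in S_m\}\subseteq \NN^r\times\NN .
\]
We encode $\tau$ by its column heights $\tau^\vee=(\tau^\vee_1\ge\cdots\ge\tau^\vee_r)$. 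The graded-family property (\Cref{lem: generalized symbolic powers graded}) shows directly that $\mathcal S$ is closed under addition. It therefore suffices, by Gordan's lemma, to show that $\mathcal S$ equals the set of lattice points of a rational polyhedral cone, or at least is a finitely generated affine semigroup.

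To identify $\mathcal S$, use \Cref{prop: sigma'}~(3): $\tau\in S_m$ if and only if $\tau\supseteq\lambda_{\rightarrow t}$ for some $\lambda\in\Lambda_m(\Sigma_{\leftarrow t})$, and by (1) this is $I_{\Sigma_{\leftarrow t}}^m$ computed for the smaller matrix. The condition is therefore controlled by the ordinary power semigroup for $\Sigma'=\Sigma_{\leftarrow t}$, together with the requirement that $\tau$ contain the first $t-1$ full columns shifted appropriately. The ordinary Rees algebra $\bigoplus_m I_{\Sigma'}^mT^m$ is trivially finitely generated, being generated in degree one. Its $U$-invariant ring is consequently finitely generated by the converse direction of Grosshans (invariants of a unipotent radical of a parabolic, or $U$-invariants of a finitely generated $G$-algebra, are finitely generated). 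So the semigroup $\mathcal S'=\{(\mu,m):\mu\in S'_m\}$ attached to $\Sigma'$ is finitely generated.

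Next I relate $\mathcal S$ to $\mathcal S'$. The map $\mu\mapsto\mu_{\rightarrow t}$ adds $t-1$ columns of height $\mu^\vee_1$; in column coordinates it is the linear map $(\mu^\vee_1,\dots)\mapsto(\mu^\vee_1,\dots,\mu^\vee_1,\mu^\vee_1,\mu^\vee_2,\dots)$. Adding anything $\supseteq$ corresponds to adding the semigroup of all diagrams, generated by the single columns $(1^k)$. Then
\[
\mathcal S=L(\mathcal S')+(\text{monoid generated by }(\text{column }e_1+\cdots+e_k,0),\ k\le r)
\]
is the sum of the image of a finitely generated semigroup under a linear map and a finitely generated semigroup, hence finitely generated. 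Here I must check that "$\tau\supseteq\nu$" in column coordinates is exactly $\tau^\vee-\nu^\vee\in$ the monoid of partitions, which holds for column vectors in the dominant cone only after care. This is the \textbf{main obstacle}: containment is componentwise $\tau^\vee\ge\nu^\vee$, while the difference need not be a partition. Instead I will describe $\mathcal S$ as $\{(\tau,m): \tau^\vee\text{ dominant},\ \tau^\vee\ge L(\mu^\vee)\text{ componentwise for some }(\mu,m)\in\mathcal S'\}$. This is the set of lattice points of $(\mathrm{cone}(L\mathcal S')+\RR^r_{\ge0}\times 0)\cap(\text{dominant}\times\RR_{\ge 0})$, provided saturation holds. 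To bypass saturation issues, I will instead directly exhibit finitely many generators: the images of generators of $\mathcal S'$, plus the vectors $(e_1+\cdots+e_k,0)$ and $(e_k\text{-type upgrades})$. Concretely, I will show that any dominant $\tau^\vee\ge L\mu^\vee$ can be reached from $L\mu^\vee$ by adding column-diagrams, because the componentwise excess can be peeled off as a sum of dominant $0/1$ vectors via a greedy argument on the staircase of $\tau^\vee$ above $L\mu^\vee$. Gordan's lemma is then used only for $\mathcal S'$ if needed, and Grosshans converts finite generation of $\mathcal R^U$ back to $\mathcal R$.
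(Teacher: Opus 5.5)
\textbf{Gap in the last step.} Your overall strategy matches the paper's. You identify $\mathcal R_s^t(I_\Sigma)^U$ with the semigroup ring of the weight semigroup $\mathcal S$. You use \Cref{prop: sigma'}~(3) to compare $\mathcal S$ with the semigroup $\mathcal S'$ of the ordinary Rees algebra of $J=I_{\Sigma_{\leftarrow t}}(Y)$, and you get finite generation of $\mathcal S'$ from Grosshans. The gap is the final step, passing from $\mathcal S'$ to $\mathcal S$.

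The concrete claim you propose to prove is that every dominant $\tau^\vee\geq L\mu^\vee$ is obtained from $L\mu^\vee$ by adding vectors $e_1+\cdots+e_k$. For a fixed $\mu$ this is false, exactly because of the obstacle you identified yourself. Take $\Sigma=\{(3,2)\}$, $r=3$, $t=2$ and $\mu=(2,1)\in\Lambda_1(\Sigma_{\leftarrow 2})$, so $L\mu^\vee=(2,2,1)$. Then $\tau=(3,3)\supseteq\mu_{\rightarrow 2}$ has $\tau^\vee=(2,2,2)$, and the excess $(0,0,1)$ is not a sum of dominant $0/1$ vectors. No greedy procedure repairs this. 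The ``$e_k$-type upgrades'' also cannot serve as generators: $(e_k,0)$ with $k\geq 2$ is not dominant, so it is not an element of $\mathcal S$. As written, no finite generating set of $\mathcal S$ is actually produced.

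\textbf{The missing idea.} You are allowed to change $\mu$. Each $S'_m=\{\mu: M_\mu\subseteq J^m\}$ is closed upward under containment, because $J^m$ is a $G_Y$-stable ideal and $I_\mu=\bigoplus_{\nu\supseteq\mu}M_\nu$. Given a dominant $\tau^\vee\geq L\mu^\vee$, set $\mu'^\vee=(\tau^\vee_t,\ldots,\tau^\vee_r)$. Then $\mu'\supseteq\mu$, so $(\mu'^\vee,m)\in\mathcal S'$. Moreover $\tau^\vee-L\mu'^\vee=(\tau^\vee_1-\tau^\vee_t,\ldots,\tau^\vee_{t-1}-\tau^\vee_t,0,\ldots,0)$ is dominant. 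So your first formula was correct after all, even in the sharper form $\mathcal S=L(\mathcal S')+\langle(e_1+\cdots+e_k,0): 1\le k\le t-1\rangle$. It gives finite generation immediately, with no appeal to Gordan's lemma, which is slightly more elementary than the paper's argument.

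\textbf{How the paper does it.} The paper handles this step by lifting to the semigroup $\mathcal P\subseteq\ZZ^{N+r}$, in which the coefficients $c_i$ of the generators of $S(J)$ are themselves variables. This makes $\mathcal P$ the set of lattice points of a rational polyhedral cone, which sidesteps the saturation issue you worried about. It then applies Gordan's lemma and projects onto $S_t(I_\Sigma)$.

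\textbf{Smaller points.} The paper first reduces to $\kk$ algebraically closed before invoking Grosshans, and you should include this step too. Your appeal to the direction ``$A^U$ finitely generated implies $A$ finitely generated'' legitimately replaces the paper's hand lifting of generators. You should, however, take $U=U_r^-\times U_n^+$ so that $K_\tau$ is genuinely $U$-invariant.
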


\begin{proof}
Finite generation descends through faithfully flat field extension \cite{StacksProject}, so it suffices to prove the claim over a  field extension of $\kk$. Therefore we may assume $\kk$ is algebraically closed.

If $I_\Sigma\not\subseteq D_t$, then $I_\Sigma^{(m)_t}=R$ for every $m\geq1$, so that $\mathcal R_s^t(I_\Sigma)=R[T]$. We may therefore assume that $I_\Sigma\subseteq D_t$.

Set $q=r-t+1$, $n'=n-t+1$, let $Y$ be a generic $q\times n'$ matrix, and let
$
G_Y=\GL_q(\kk )\times \GL_{n'}(\kk)
$
act naturally on $\kk[Y]$.
Define
\[
\Omega:=\Sigma_{\leftarrow t}
=\bigl\{\sigma_{\leftarrow t}:\sigma\in\Sigma\bigr\}\subset H_q.
\]
 Truncation may destroy the antichain property; in that case we
replace $\Omega$ by its containment-minimal elements, which does not change
the ideal indexed by $\Omega$, and we continue to use the same notation.

Set $J=I_\Omega(Y)\subseteq\kk[Y]$.
The reason for introducing this
smaller ideal is that \Cref{prop: sigma'}~(3) recovers the generalized symbolic
powers of $I_\Sigma$ from the ordinary powers of $J$.
The ordinary Rees algebra
\[
\mathcal R(J)=\bigoplus_{m\geq0}J^mT^m \subseteq \kk[Y][T]
\]
is a finitely generated bigraded $G_Y$-algebra where  $G_Y$ acts on $\kk[Y][T]$ through its natural action on  $\kk[Y]$ extended to $\kk[Y][T]$ by  fixing $T$.  Let $U_Y\subset G_Y$ be a maximal unipotent subgroup, for example, $U_Y=U^-_q\times U^+_{n'}$ where $U^+_i, U^-_i$ denote, respectively,  the upper and lower triangular groups of $i\times i$ matrices with 1's on the main diagonal.
Then Grosshans' finite-generation theorem
\cite{Grosshans1983}  gives that
$\mathcal R(J)^{U_Y}$ is a finitely generated algebra.

Each simple module $M_\lambda$ in the Cauchy decomposition of $\kk[Y]$ has a one-dimensional highest-weight space which coincides with its $U_Y$-invariant submodule by \cite[Theorem 2.4]{Brion2010}. In fact, as the polynomial $K_\l$ in \eqref{eq: K} is $U_Y$-invariant we conclude that $(M_\lambda)^{U_Y}=\operatorname{span}_{\kk}(K_\l)$.
If $\s,\lambda, \mu\in H_q$ satisfy $\sigma^\vee=\lambda^\vee+\mu^\vee$, then $\sigma$ consists of the rows of $\lambda$ together with the rows of $\mu$, ordered weakly decreasingly. Therefore the identity $K_\l K_\mu=K_\s$ holds and it leads to the conclusion
\[(M_\l )^{U_Y} ( M_\mu)^{U_Y}=\operatorname{span}_{\kk}(K_\l K_\mu)=\operatorname{span}_{\kk}(K_\s)=(M_\s)^{U_Y}.\]
Equivalently the decomposition $\kk[Y]^{U_Y}=\bigoplus_{\s\in H_q} M_\s^{U_Y}=\bigoplus_{\s\in H_q} \operatorname{span}_{\kk}(K_\s)$ makes   $\kk[Y]^{U_Y}$ into a $H_q^\vee$-graded ring by assigning weight $\s^\vee$ to $K_\s$.

We extend this to a bigrading on $(\kk[Y][T])^{U_Y}=\kk[Y]^{U_Y}[T]$ where the elements of  $(M_\lambda)^{U_Y}T^m$ are homogeneous of weight $(m,\l^\vee)$. As before  $(M_\lambda)^{U_Y}T^m$ is  a one-dimensional vector space spanned by a highest weight vector $v_{(m,\l)}$.
The subring $\mathcal R(J)^{U_Y}\subseteq (\kk[Y][T])^{U_Y}$ is thus graded by the following set of weights, which forms an affine semigroup
\begin{equation}\label{eq: S(J)}
S(J):=\bigl\{(m,\lambda^\vee): M_\lambda \subseteq J^m\bigr\} \subseteq \ZZ_{\geq0}^{q+1}.
\end{equation}
Here a conjugate partition $\lambda^\vee$ is regarded
as a  $q$-tuple by appending zeros.

Since $\mathcal R(J)^{U_Y}$ has  a finite set of generators, and is graded with respect to the weights \eqref{eq: S(J)}, it also has a finite set of homogeneous generators with respect to these weights, e.g.  the homogeneous components of any finite generating set. Let $v_{(m_1,\l^{(1)})}, v_{(m_2,\l^{(2)})}, \cdots v_{(m_N,\l^{(N)})}$ be algebra generators for $\mathcal R(J)^{U_Y}$, which can be assumed highest-weight vectors by the preceding observation. Let $f\in \mathcal R(J)^{U_Y}$ have weight $(m, \lambda^\vee)$. Then $f$ can be written up to a scalar as a monomial in  $v_{(m_1,\l^{(1)})}, v_{(m_2,\l^{(2)})}, \cdots v_{(m_N,\l^{(N)})}$ as the vector space of elements of weight $(m, \lambda^\vee)$ is one dimensional. Consequently $(m, \lambda^\vee)$ can be written as a $\NN$-linear combination of $(m_1,{\l^{(1)}}^\vee), (m_2,{\l^{(2)}}^\vee), \cdots,$ $(m_N,{\l^{(N)}}^\vee)$, showing that these weights generate the semigroup $S(J)$.

Define the additive map
\[
L_t:\ZZ_{\geq0}^{q}\longrightarrow\ZZ_{\geq0}^{r},
\qquad
L_t(a_1,\ldots,a_q)
   =(\underbrace{a_1,\ldots,a_1}_{t\text{ entries}},a_2,\ldots,a_q).
\]
For every $\lambda\in H_q$, the definition of the shift operation gives
\begin{equation}\label{eq: conjugate shift}
(\lambda_{\rightarrow t})^\vee=L_t(\lambda^\vee).
\end{equation}

Let  $U\subseteq G=\GL_r(\kk)\times\GL_n(\kk)$ be a maximal unipotent subgroup corresponding to the chosen Borel subgroup.
The same argument shows that $\mathcal R_s^t(I_\Sigma)^{U}$
is graded by the weight semigroup  \[
S_t(I_\Sigma):=\left\{(m,\lambda^\vee): M_\lambda \subseteq I_\Sigma^{(m)_t}\right\} \subseteq \ZZ_{\geq 0}^{r+1}.
\]
The formula in \Cref{prop: sigma'}~(3), together with \Cref{rem: G-invariant}~(1) and \eqref{eq: conjugate shift}, gives the following alternate description of this semigroup
\begin{equation}\label{eq: symbolic Rees weight semigroup}
S_t(I_\Sigma)
=\left\{(m,\beta):
\begin{array}{l}
\beta\in H_r^\vee \text{ and there is }(m,\alpha)\in S(J)\\[-2pt]
\text{such that }\beta_i\geq L_t(\alpha)_i\text{ for }1\leq i\leq r
\end{array}
\right\}.
\end{equation}
Indeed, $M_\lambda \subseteq J^m$ means that $\lambda$ contains a diagram in $\Lambda_m(\Omega)$. Moreover  $M_\tau \subseteq I_\Sigma^{(m)_t}$ if and only if $\tau$ contains $\lambda_{\rightarrow t}$ for some  $\lambda \in \Lambda_m(\Omega)$, which, after conjugating gives $\tau^\vee_i\geq (\l_{\rightarrow t}^\vee)_i$ for $1\leq i\leq r$  and using \eqref{eq: conjugate shift} becomes  \eqref{eq: symbolic Rees weight semigroup}.

Recall that $(m_1,{\l^{(1)}}^\vee), (m_2,{\l^{(2)}}^\vee), \cdots, (m_N,{\l^{(N)}}^\vee)$ are generators for $S(J)$. Consider the semigroup
\[
\mathcal{P}=\left\{ (c_1,\ldots,c_N,\beta)\in \ZZ^{N+r} : \
c_i\geq0, \ \beta_1\geq\cdots\geq\beta_r\geq0, \
\beta_j\geq\sum_{i=1}^N c_iL_t({\l^{(i)}}^\vee)_j \text{ for }1\leq j\leq r \right \}.
\]
It is the set of lattice points of a rational polyhedral cone, hence is finitely generated by Gordan's lemma. The image of $\mathcal{P}$ under the linear transformation
\[
(c_1,\ldots,c_N,\beta)
\longmapsto
\left(\sum_{i=1}^Nc_im_i,\beta\right)
\]
is exactly the semigroup $S_t(I_\Sigma)$ in \eqref{eq: symbolic Rees weight semigroup}. Therefore $S_t(I_\Sigma)$ is finitely generated.

Finally, let $\delta_1,\ldots,\delta_s$ with $\delta_i=(m_i,\beta^{(i)})$ be a finite generating set for the semigroup $S_t(I_\Sigma)$.  Set  $w_i=K_{{\beta^{(i)}}^\vee}T^{m_i}$ for each member of the previous set. As
\[
V_i:=\operatorname{span}_{\kk}(G\cdot w_i)= M_{{\beta^{(i)}}^\vee}T^{m_i}
\]
and the latter is a finite dimensional vector space, one can pick a finite basis $\mathcal B_i$ of $V_i$ and let
\[ C = R\bigl[\mathcal B_1\cup\cdots\cup\mathcal B_s\bigr] \subseteq \mathcal R_s^t(I_\Sigma). \]
Note that $C$ is stable under the action of $G$ on $\mathcal R_s^t(I_\Sigma)$ and contains $w_1, \ldots, w_s$.

Since every highest-weight vector in $\mathcal R_s^t(I_\Sigma)$ is an element of $\mathcal R_s^t(I_\Sigma)^U$, up to a nonzero scalar, it is a product of the elements $w_i$. Thus all highest-weight vectors in $\mathcal R_s^t(I_\Sigma)$ are elements of $C$. Now let $M_\lambda T^m$ be any irreducible $G$-submodule occurring in $\mathcal R_s^t(I_\Sigma)$. Its highest-weight vector lies in $C$, and since $M_\lambda T^m$ is spanned by the $G$-orbit of its highest-weight vector and $C$ is $G$-stable, the entire module $M_\lambda T^m$ lies in $C$. This gives the equality
$C=\mathcal R_s^t(I_\Sigma)$, which proves finite generation.
\end{proof}

\begin{appendix}

\section{Nonvanishing of some top powers of Schubert classes}\label{appendix}

This section gives a self-contained, combinatorial explanation for the nonvanishing of certain powers of Schubert classes.
The main theorem,  \Cref{thm: rectangle corrected}, is used in the proof of the upper bound in \Cref{theorem: waldschmidt constant one diagram}.

Before we state and prove the main theorem by a rather technical construction, we illustrate the idea by means of an example.

\begin{example}
Set  $r=5$ and $\s=(4,3,1)$ and notice $|\s|=8$. We wish to show that $c_{\underbrace{\scriptstyle\sigma^\vee,\ldots,\sigma^\vee}_{5}}^{\rho^\vee}> 0$ where $\rho=(5^8)$. It turns out, as shall be explained in the proof of  \Cref{thm: rectangle corrected} that this is equivalent to constructing a Yamanouchi filling for a diagram consisting of 5 copies of $\s$ placed in disjoint rows and columns  with filling given by the multiset $\rho^\vee=\{1^8, 2^8, 3^8, 4^8, 5^8\}$.

To construct such a filling we consider an auxiliary diagram $\theta$ as in \Cref{fig: theta}, which one can think of as the superposition of $r$ shifted copies of $\sigma$, the $i$-th copy being filled constantly with $i$ and shifted to have its leftmost column in column $i$ of $\theta$. The diagram $\theta$ is then filled with sets of integers denoted $S_{i,j}$ in the proof below, where $i$ labels the row and $j$ the column. Each set $S_{i,j}$ is shown in box $(i,j)$ of $\theta$ in \Cref{fig: theta} below.

\begin{figure}[ht]
 \scalebox{.9}{
\ytableausetup{boxsize=4em}
\begin{ytableau}
       1 & 1,2  & 1,2,3 & 1,2,3,4 & 2,3,4,5 & 3,4,5 & 4,5 & 5 \\
     1 & 1,2  & 1,2,3 & 2,3,4 & 3,4,5 & 4,5 & 5 &  \none \\
           1 & 2  & 3 & 4 & 5 &\none & \none & \none \\
\end{ytableau}
}
\caption{The diagram $\theta$ with its filling (the set $S_{ij}$ appears in row $i$, column $j$).}
\label{fig: theta}
\end{figure}

\begin{figure}[ht]
    \centering
    \ytableausetup{boxsize=1.7em}
    \setlength{\tabcolsep}{8pt}

    \begin{tabular}{ccccc}

        \begin{ytableau}
            1 & 2 & 3 & 4 \\
            1 & 2 & 3 \\
            1
        \end{ytableau}
         &
        \begin{ytableau}
            1 & 2 & 3 & 5 \\
            1 & 2 & 4 \\
            2
        \end{ytableau}
          &
        \begin{ytableau}
            1 & 2 & 4 & 5 \\
            1 & 3 & 5 \\
            3
        \end{ytableau}
         &
        \begin{ytableau}
            1 & 3 & 4 & 5 \\
            2 & 4 & 5 \\
            4
        \end{ytableau}
        &
        \begin{ytableau}
            2 & 3 & 4 & 5 \\
            3 & 4 & 5 \\
            5
        \end{ytableau}
        \\[4pt]

        $\nu^{(1)}$
        &
        $\nu^{(2)}$
        &
        $\nu^{(3)}$
        &
        $\nu^{(4)}$
        &
        $\nu^{(5)}$
    \end{tabular}
    \caption{The five tableaux of shape $\sigma=(4,3,1)$,
    arranged from left to right as
    $\nu^{(1)}\sqcup\nu^{(2)}\sqcup\nu^{(3)}
    \sqcup\nu^{(4)}\sqcup\nu^{(5)}$, the order used for
    the column-reading word.}
    \label{fig:filling}
\end{figure}

We will construct 5 fillings of $\s$ based on $\theta$ and called  $\nu^{(1)},\nu^{(2)},\nu^{(3)},
\nu^{(4)},\nu^{(5)}$. One way to construct them is iterative, starting with $\nu^{(1)}$, continuing with $\nu^{(2)}$, etc. We visualize the filling of the diagram $\nu^{(j)}$ as arising from the  shape $\s$ placed inside $\theta$ with its leftmost column in column $j$. For each of the boxes in this shape we pick out the largest entry available in that position after the entries used up by the previous fillings were deleted and we place that entry in the corresponding box of $\nu^{(j)}$. The result of this algorithm is presented in \Cref{fig:filling}.

In the proof of \Cref{thm: rectangle corrected} we give a deterministic (non-recursive) formula for the fillings $\nu^{(j)}$ and we show that $\nu^{(1)}\sqcup\nu^{(2)}\sqcup\nu^{(3)}
    \sqcup\nu^{(4)}\sqcup\nu^{(5)}$ is a Yamanouchi filling.
\end{example}

\begin{theorem}\label{thm: rectangle corrected}
Let $\sigma=(\sigma_1,\ldots,\sigma_s)\in H_r$, let
$\ell=|\sigma|$, and set $\rho=(r^\ell)$.  Then
\[
c_{\underbrace{\scriptstyle\sigma^\vee,\ldots,\sigma^\vee}_{r\text{ copies}}}^{\rho^\vee}>0.
\]
\end{theorem}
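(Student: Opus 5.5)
By the Littlewood--Richardson rule for products of several Schur functions, the iterated coefficient $c^{\rho^\vee}_{\sigma^\vee,\ldots,\sigma^\vee}$ equals the number of Littlewood--Richardson tableaux of content $\rho^\vee=(\ell^r)$ on the skew shape $\sigma^\vee\ast\cdots\ast\sigma^\vee$. This is the disjoint union of $r$ copies of $\sigma^\vee$, placed so that they share no rows and no columns. The reason is that the skew Schur function of such a disjoint union is the product $s_{\sigma^\vee}^r$, and $c^{\rho^\vee}_{\sigma^\vee,\ldots,\sigma^\vee}$ is then the coefficient of $s_{\rho^\vee}$ in this skew Schur function, which is counted by LR tableaux of content $\rho^\vee$.

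Dualizing as in \Cref{lemma Lambda=littlewood rich}, it therefore suffices to produce one Yamanouchi filling of the disjoint union $\nu^{(1)}\sqcup\cdots\sqcup\nu^{(r)}$ of $r$ copies of $\sigma$, arranged from left to right. Such a filling must satisfy three conditions: each copy is standard (rows strictly increasing, columns weakly increasing); every value $i\in\{1,\ldots,r\}$ occurs exactly $\ell$ times; and the column-reading word, read copy by copy from left to right and bottom-to-top within each column, is a lattice word.

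\textbf{Construction.} I would use the auxiliary diagram $\theta$ of the example. Row $a$ of $\theta$ has length $\sigma_a+r-1$, and its box in column $c$ carries the set
\[
S_{a,c}=\{i\in\{1,\ldots,r\}: c-\sigma_a+1\le i\le c\}.
\]
Copy $\nu^{(j)}$ sits in columns $j,\ldots,j+\sigma_1-1$. I would replace the greedy description by a closed formula, guided by the example. There, row $a$ of $\nu^{(j)}$, of length $k=\sigma_a$, is the increasing listing of $\{1,\ldots,r\}$ minus an $(r-k)$-subset $E_{k,j}$. These subsets are arranged so that $j\mapsto E_{k,j}$ runs through the sets $\{j+k,\ldots,r\}\cup\{\text{first } r-k-(r-j-k+1)^+ \text{ elements}\}$, equivalently the greedy "largest available" choice. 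The key counting point is that each value $i$ lies in exactly $r-k$ of the sets $E_{k,j}$, hence in exactly $k$ of the rows of length $k$. Summing over the rows of $\sigma$ then gives content exactly $\ell$ for every value.

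\textbf{Verification.} I would check the three conditions in the following order.
\begin{enumerate}
\item \emph{Rows increasing.} This is automatic, since each row lists a set in increasing order.
\item \emph{Columns weakly increasing.} One must show that row $a+1$ of $\nu^{(j)}$ is entrywise at least row $a$ in the first $\sigma_{a+1}$ positions. I would deduce this from the monotonicity of $S_{a,c}$ in $a$ (shorter rows give windows shifted right) together with the explicit form of $E_{k,j}$.
\item \emph{Lattice condition.} For each $i$, one must show that in every prefix of the reading word the number of $i$'s is at least the number of $(i+1)$'s.
\end{enumerate}

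\textbf{Main obstacle.} I expect the lattice condition to be the hardest step. My plan is to pair each occurrence of $i+1$ with an earlier occurrence of $i$. Concretely, the entry $i+1$ placed in box $(a,b)$ of $\nu^{(j)}$ comes from $S_{a,j+b-1}$, and I would match it with an $i$ sitting either in the same column of $\theta$ one step earlier, i.e. in a copy $\nu^{(j')}$ with $j'<j$, or lower in the same column of $\nu^{(j)}$. Either location is read earlier. I would first check that this pairing is injective for a single row shape $\sigma=(k)$, and then argue that stacking rows preserves injectivity, because shorter rows contribute their values in columns further to the left. If the explicit pairing becomes unwieldy, I would fall back on proving the lattice property inductively in $j$, tracking the vector of prefix counts after each copy $\nu^{(j)}$ is read.
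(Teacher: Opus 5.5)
Your reduction matches the paper: the disconnected skew shape, the factorization of its skew Schur function, and dualizing to a Yamanouchi filling of $r$ copies of $\sigma$ read left to right. The filling you intend, the greedy ``largest available'' choice in $\theta$, is also exactly the paper's filling. The first gap is in how you pin it down. Your closed formula for $E_{k,j}$ does not describe that filling. Reading ``first elements'' as the smallest elements of $\{1,\ldots,r\}$, it makes every row of length $k$ the interval starting at $\min(j,r-k+1)$. For $r=5$, $k=4$ it gives $E_{4,2}=\{1\}$, whereas the example has row $1,2,3,5$, i.e.\ $E_{4,2}=\{4\}$. Under your formula the value $1$ then occurs once among the five copies of that row instead of $k=4$ times, so your counting claim fails. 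Also, ``rows increasing is automatic'' conflates listing a set in increasing order with the positional rule that the entry in local column $b$ of $\nu^{(j)}$ comes from $S_{a,j+b-1}$; that compatibility is exactly what must be proved. The clean description is as follows. The copies covering the $\theta$-box $(a,c)$ are indexed by $S_{a,c}=[L_a(c),U(c)]$ itself, and the greedy rule is the order-reversing bijection $v\mapsto L_a(c)+U(c)-v$. This gives $\nu^{(k)}_{a,b}=\max\{1,b+k-\sigma_a\}+\min\{b+k-1,r\}-k$. From this formula, row strictness (using $\sigma_a\le r$) and column weak increase (using $\sigma_a\ge\sigma_{a+1}$) are one-line checks.

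More seriously, the pairing you plan for the lattice condition fails. Because the greedy rule gives larger values to earlier copies, the $i$ in the same $\theta$-box as an $i+1$ lands in copy $L+U-i$. That is one copy \emph{later} than the $i+1$, so it is read after it. An $i$ below an $i+1$ in the same column of $\nu^{(j)}$ cannot exist either, since columns weakly increase downward. The missing idea is the row-preserving shift: pair the occurrence of $v$ in $S_{a,c}$ with the occurrence of $v-1$ in $S_{a,c-1}$. This is a bijection, since $v\in S_{a,c}\iff v\le c\le v+\sigma_a-1$. With $g_a=L_a+U$, these two occurrences sit in copies $g_a(c)-v$ and $g_a(c-1)-v+1$. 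Moreover $g_a$ increases by at least one per column because $\sigma_a\le r$: the increment of $U$ vanishes only for $c\ge r$, and that of $L_a$ only for $c\le\sigma_a-1$. Hence the partner lies either in an earlier copy, or in the same copy and same row one column to the left; either way it is read first. This treats all rows at once, so no separate ``stacking rows'' argument is needed.
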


\begin{proof}
Set $a=\sigma_1$.  After omitting trailing zero parts, define partitions
$\widetilde\mu\subseteq\widetilde\lambda$ by
\begin{equation}\label{eq: corrected tilde partitions}
 \widetilde\mu_{qs+i}=(r-1-q)a,
 \qquad
 \widetilde\lambda_{qs+i}=(r-1-q)a+\sigma_i
 \quad
 (0\leq q\leq r-1,\ 1\leq i\leq s).
\end{equation}
These are partitions: within each block this follows from the fact that
$\sigma$ is a partition, while at the boundary between two consecutive blocks
the last part of the upper block is strictly larger than the first part of the
lower block.  The skew diagram $\widetilde\lambda/\widetilde\mu$ is the disjoint
union of $r$ translates $\nu^{(1)},\ldots,\nu^{(r)}$ of $\sigma$.  We label them
from left to right, so that the box $(i,j)$ of $\nu^{(k)}$ has global (row,column)--coordinates
\[
 ((r-k)s+i,(k-1)a+j).
\]
In particular, distinct components occupy disjoint sets of rows and columns.
Consequently their fillings are independent and by  \cite[Proposition 9]{Zelevinsky} we have
\begin{equation}\label{eq: corrected multiple to ordinary LR}
c_{\widetilde\mu^\vee,\rho^\vee}^{\widetilde\lambda^\vee}
=c_{\underbrace{\scriptstyle\sigma^\vee,\ldots,\sigma^\vee}_{r\text{ copies}}}^{\rho^\vee}.
\end{equation}

It remains to construct a Yamanouchi filling of
$\widetilde\lambda/\widetilde\mu$ with content
$\rho^\vee=(\ell^r)$.  Put
\[
\theta=(\sigma_1+r-1,\ldots,\sigma_s+r-1)
\]
and, for a box $(i,c)$ of $\theta$, set
\begin{equation}\label{eq: corrected S sets}
L_i(c)=\max\{1,c-\sigma_i+1\},\qquad
U(c)=\min\{c,r\},\qquad
S_{i,c}=[L_i(c),U(c)]\cap\mathbb Z.
\end{equation}
Thus $v\in S_{i,c}$ if and only if
$v\leq c\leq v+\sigma_i-1$ and $v\leq r$. Equivalently the sets $S_{i,c}$ arise from combining the fillings  of $r$ shifted copies of $\s$ where the $k$-th copy is shifted $k-1$ columns to the right and filled constantly by $k$. It contributes $k$ exactly once for each of the $\ell$ boxes of $\s$. Therefore each $k\in[r]$ has multiplicity $\ell$ and it follows that, as multisets,
\begin{equation}\label{eq: corrected content}
\mathop{\biguplus}_{(i,c)\in\theta}S_{i,c}
=\{1^\ell,2^\ell,\ldots,r^\ell\}.
\end{equation}

For each occurrence $v\in S_{i,c}$, define
\begin{equation}\label{eq: corrected stage assignment}
\kappa_i(c,v)=L_i(c)+U(c)-v,
\qquad j=c-\kappa_i(c,v)+1,
\end{equation}
and place the entry $v$ in the box $(i,j)$ of the component
$\nu^{(\kappa_i(c,v))}$.  Observe that  $1\leq \kappa_i(c,v)\leq U(c)\leq r$ and since $v$ belongs to
$S_{i,c}$, an elementary calculation shows that $1\leq j\leq\sigma_i$. Thus the placement makes sense.

 Conversely, for every box
$(i,j)$ of $\nu^{(k)}$, take $c=j+k-1$.  Then $k\in S_{i,c}$, and there is
exactly one occurrence assigned to that box. Its value is
\begin{equation}\label{eq: corrected nu formula}
\nu_{i,j}^{(k)}=L_i(c)+U(c)-k=
\max\{1,j+k-\sigma_i\}+\min\{j+k-1,r\}-k.
\end{equation}
Thus the assignment is well defined on every box and, by
\eqref{eq: corrected content}, has the required content.

We next verify standardness.  In a fixed row of $\nu^{(k)}$, both terms in
\eqref{eq: corrected nu formula} are nondecreasing as $j$ increases.  They
cannot both remain constant from $j$ to $j+1$: constancy of the first would require
$j+k-1\leq\sigma_i-1$ and $\sigma_i\leq r$, whereas constancy of the second would
require $j+k-1\geq r$. Hence rows are strictly
increasing.  Since $\s_i\geq\sigma_{i+1}$, we have
\[
\max\{1,j+k-\sigma_i\}
\leq\max\{1,j+k-\sigma_{i+1}\},
\]
so columns are weakly increasing from top to bottom.  Since distinct
components have disjoint rows and columns, the entire skew filling is
standard.

Finally, we prove the Yamanouchi condition.  For $u<v$ and an occurrence
$v_{i,c}$ of $v$ coming from $S_{i,c}$, define
\begin{equation}\label{eq: corrected pairing}
f_{u,v}(v_{i,c})=u_{i,c-v+u}.
\end{equation}
The equivalences
\begin{eqnarray*}
v\in S_{i,c} \quad & \Longleftrightarrow & \quad v\leq c\leq v+\sigma_i-1\\
u\in S_{i,c-v+u} \quad & \Longleftrightarrow & \quad u\leq c-v+u\leq u+\sigma_i-1
\end{eqnarray*}
show that this is a bijection from the occurrences of $v$ to those of $u$.
Let $g_i(c)=L_i(c)+U(c)$.  Each increment
$g_i(c+1)-g_i(c)$ is at least one.  Indeed, the increment of $U$ can vanish
only when $c\geq r$, and that of $L_i$ can vanish only when
$c\leq\sigma_i-1$; these conditions cannot hold simultaneously because
$\sigma_i\leq r$.

If $v_{i,c}$ is placed in component $\nu^{(k)}$ and its paired occurrence
is placed in $\nu^{(t)}$, then
\[
k=g_i(c)-v,
\qquad
t=g_i(c-v+u)-u.
\]
Summing the preceding increment inequality over $v-u$ steps gives $t\leq k$.
If $t<k$, the component containing the paired $u$ lies strictly to the left
of the component containing $v$, so the $u$ occurs first in the column-reading
word.  If $t=k$, the two entries lie in the same row of the same component,
and the local column of $u$ is smaller by $v-u$, so again the $u$ occurs
first.  Therefore, in every prefix of the column-reading word of $\widetilde\lambda/\widetilde\mu=\nu^{(1)}\sqcup\cdots \sqcup \nu^{(r)}$, the number of
$u$'s is at least the number of $v$'s for every $u<v$.  In other words, the filling is
Yamanouchi.  The dual Littlewood--Richardson rule now gives
$c_{\widetilde\mu^\vee,\rho^\vee}^{\widetilde\lambda^\vee}>0$, and the conclusion follows
from \eqref{eq: corrected multiple to ordinary LR}.
\end{proof}

\begin{corollary}\label{cor: Schubert corrected}
Let $k,r$ be positive integers, and let $\mathcal S_\sigma$ be the Schubert
class indexed by a partition $\sigma$ satisfying $|\sigma|\leq k$ and
$\sigma_1\leq r$ in $H^*(\Gr(k,k+r),\mathbb Z)$.  Then
$\mathcal S_\sigma^r\neq0$.  If $|\sigma|=k$, then
\[
\mathcal S_\sigma^r=c[\mathrm{pt}]
\qquad\text{for some integer }c>0.
\]
\end{corollary}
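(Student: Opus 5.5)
We deduce the corollary from \Cref{thm: rectangle corrected} using the standard presentation of the cohomology ring of a Grassmannian. We fix the convention that $H^*(\Gr(k,k+r),\mathbb Z)$ has $\mathbb Z$-basis the Schubert classes $\mathcal S_\lambda$, indexed by partitions $\lambda$ fitting inside the $k\times r$ rectangle. Here $\lambda$ has at most $k$ rows and $\lambda_1\leq r$, so the indexing $\sigma_1\leq r$, $|\sigma|\leq k$ is consistent with this. Under this convention,
\[
\mathcal S_\lambda\mathcal S_\mu=\sum_\nu c_{\lambda\mu}^\nu\,\mathcal S_\nu ,
\]
where $\nu$ runs over partitions inside the rectangle and the coefficients are the Littlewood--Richardson coefficients. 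Equivalently, the ring is the quotient of the ring of symmetric functions by the span of the $s_\nu$ with $\nu\not\subseteq(r^k)$. The point class is $\mathcal S_{(r^k)}$.

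\textbf{Iterating the product rule.} Iterating the rule, $\mathcal S_\sigma^r=\sum_\nu c^\nu_{\sigma,\ldots,\sigma}\mathcal S_\nu$, with $\nu$ ranging over partitions contained in $(r^k)$ and $|\nu|=r|\sigma|$. All coefficients are nonnegative, so the power is nonzero as soon as one $\nu$ inside the rectangle has $c^\nu_{\sigma,\ldots,\sigma}>0$. The natural candidate is $\nu=(r^{\ell})$ with $\ell=|\sigma|\leq k$, which fits in the rectangle.

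\textbf{Matching with the theorem.} We need $c^{(r^\ell)}_{\sigma,\ldots,\sigma}>0$. \Cref{thm: rectangle corrected}, applied to the diagram $\sigma^\vee$ in place of $\sigma$, is not available directly, because $\sigma^\vee$ need not lie in $H_r$. Instead we use the conjugation symmetry $c^{\nu}_{\lambda\mu}=c^{\nu^\vee}_{\lambda^\vee\mu^\vee}$ (the involution $\omega$ on symmetric functions), iterated. Since $\sigma\in H_r$ (as $\sigma_1\le r$) and $|\sigma|=\ell$, the theorem gives
\[
c^{(r^\ell)^\vee}_{\sigma^\vee,\ldots,\sigma^\vee}>0 .
\]
Applying $\omega$ yields $c^{(r^\ell)}_{\sigma,\ldots,\sigma}>0$.

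One subtlety remains: the theorem's coefficients are defined in $r$ variables, where $s_\lambda$ with more than $r$ rows vanishes. Positivity of a coefficient in $r$ variables implies positivity of the same coefficient in the full ring of symmetric functions, because truncation to $r$ variables only kills terms and never creates them. So it is safe to pass to $\Lambda$, apply $\omega$ there, and then project to the Grassmannian quotient, where $(r^\ell)$ survives. This gives $\mathcal S_\sigma^r\neq 0$.

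\textbf{The case $|\sigma|=k$.} Now $\mathcal S_\sigma^r$ lies in degree $rk=\dim_{\mathbb C}\Gr(k,k+r)$, the top degree, which is spanned by the point class $\mathcal S_{(r^k)}$. The only partition in the rectangle of size $rk$ is $(r^k)$ itself, so $\mathcal S_\sigma^r=c\,[\mathrm{pt}]$ with
\[
c=c^{(r^k)}_{\sigma,\ldots,\sigma}>0
\]
by the previous step.

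The main obstacle is bookkeeping the conventions, namely conjugation and which factor of the rectangle is rows versus columns. I would verify this against a small case such as $r=2$, $k=1$, $\sigma=(1)$, where $\mathcal S_1^2=[\mathrm{pt}]$ in $\mathbb P^2$.
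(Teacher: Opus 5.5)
Your proof is correct and follows the paper's argument. You show that the coefficient of the rectangle $\mathcal S_{(r^\ell)}$ in $\mathcal S_\sigma^r$ is positive by \Cref{thm: rectangle corrected}; this rectangle fits in the $k\times r$ box because $\ell=|\sigma|\le k$, and when $\ell=k$ it is the top-degree point class. Your use of the involution $\omega$ to pass from $c^{(\ell^r)}_{\sigma^\vee,\ldots,\sigma^\vee}$ to $c^{(r^\ell)}_{\sigma,\ldots,\sigma}$, together with your check that $r$-variable Littlewood--Richardson coefficients agree with those in the full ring of symmetric functions, spells out a conjugation step that the paper's one-line proof leaves implicit.
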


\begin{proof}
Set $\ell=|\sigma|$.  The rectangle $(r^\ell)$ fits inside the $k\times r$
rectangle because $\ell\leq k$.  By \Cref{thm: rectangle corrected}, its
coefficient in $\mathcal S_\sigma^r$ is positive, proving nonvanishing.  If
$\ell=k$, this rectangle is the full $k\times r$ rectangle and represents the
point class.  Since the product already has top degree, the displayed formula
follows.  The last assertion records the distinction between a nonzero
multiple of the generator of the rank-one group
$H^{2kr}(\Gr(k,k+r),\mathbb Z)$ and an integral generator.
\end{proof}

If $|\s|=k$, \Cref{cor: Schubert corrected} ensures that $\mathcal S_\sigma^r$ spans top cohomology after tensoring with $\mathbb Q$, but it need not
generate the top integral cohomology group.

\begin{example}\label{ex: corrected integral comparison}
For $\sigma=(2,1)$  in $\Gr(3,6)$,
\[
c_{(2,1),(2,1),(2,1)}^{(3,3,3)}=2,
\qquad
\mathcal S_{(2,1)}^3=2[\mathrm{pt}]
\]
so $ \mathcal S_{(2,1)}^3$ does not generate the top integral cohomology group.
\end{example}

For fixed $\sigma\in H_r$, \Cref{cor: Schubert corrected} says that
$\mathcal S_\sigma^r$ is nonzero in $H^*(\Gr(k,k+r),\mathbb Z)$ for every
$k\geq|\sigma|$.  This is a stabilization statement as the ambient
Grassmannian varies with $k$. It does not imply nonvanishing of analogous top-degree products in a fixed Grassmannian.

\begin{example}\label{ex: corrected vanishing comparison}
The partition $\sigma=(4,1,1)$ fits in the $3\times4$ rectangle and therefore
indexes a Schubert class in $\Gr(3,7)$.  Its Poincar\'e-dual complementary
partition is $(3,3)$, not $(4,1,1)$.  Hence
\[
c_{(4,1,1),(4,1,1)}^{(4,4,4)}=0,
\qquad
\mathcal S_{(4,1,1)}^2=0
\quad\text{in }H^*(\Gr(3,7),\mathbb Z).
\]
This is a genuine top-degree vanishing example.  In contrast, the same
partition indexes a class in $\Gr(6,10)$, and since $|(4,1,1)|=6$ and $r=4$,
\Cref{cor: Schubert corrected} gives
$\mathcal S_{(4,1,1)}^4\neq0$ in top degree.
\end{example}

\end{appendix}

	\bibliographystyle{alpha}
	\bibliography{References}

\end{document}